\newtheorem{theorem}{Theorem}[section]
\newtheorem{corollary}[theorem]{Corollary}
\newtheorem{lemma}[theorem]{Lemma}
\newtheorem{proposition}[theorem]{Proposition}
\newtheorem{remark}[theorem]{Remark}
\def\J#1#2#3{ \left\{ #1,#2,#3 \right\} }
\def\11{\textbf{1}}
\begin{document}
	
\title{On the strict topology of the multipliers of a JB$^*$-algebra}
	
\author[F.J. Fern\'{a}ndez-Polo]{Francisco J. Fern\'{a}ndez-Polo}
\address[F.J. Fern\'{a}ndez-Polo]{Departamento de An{\'a}lisis Matem{\'a}tico, Facultad de	Ciencias, Universidad de Granada, 18071 Granada, Spain.}
\email{pacopolo@ugr.es}
		
\author[J.J. Garc{\' e}s]{Jorge J. Garc{\' e}s}
\address[J.J. Garc{\' e}s]{Departamento de Matem{\'a}tica Aplicada, ETSIDI, Universidad
	Polt{\' e}cnica de Madrid, Madrid}
\email{j.garces@upm.es}
	
\author[L. Li]{Lei Li}
\address[L. Li]{School of Mathematical Sciences and LPMC, Nankai University, Tianjin 300071, China}
\email{leilee@nankai.edu.cn}

\author[A.M. Peralta]{Antonio M. Peralta}
\address[A.M. Peralta]{Instituto de Matem{\'a}ticas de la Universidad de Granada (IMAG).Departamento de An{\'a}lisis Matem{\'a}tico, Facultad de Ciencias, Universidad de Granada, 18071 Granada, Spain.}
\email{aperalta@ugr.es}

 \dedicatory{Dedicated to the memory and scientific contributions of Professor C.M. Edwards (1941--2021)}
	
	\date{}
	
	\begin{abstract} We introduce the Jordan-strict topology on the multipliers algebra of a JB$^*$-algebra, a notion which was missing despite the fourty years passed after the first studies on Jordan multipliers. In case that a C$^*$-algebra $A$ is regarded as a JB$^*$-algebra, the J-strict topology of $M(A)$ is precisely the well-studied C$^*$-strict topology. We prove that every JB$^*$-algebra $\mathfrak{A}$ is J-strict dense in its multipliers algebra $M(\mathfrak{A})$, and that latter algebra is J-strict complete. We show that continuous surjective Jordan homomorphisms, triple homomorphisms, and orthogonality preserving operators between JB$^*$-algebras admit J-strict continuous extensions to the corresponding type of operators between the multipliers algebras. We characterize J-strict continuous functionals on the multipliers algebra of a JB$^*$-algebra $\mathfrak{A}$, and we establish that the dual of $M(\mathfrak{A})$ with respect to the J-strict topology is isometrically isomorphic to $\mathfrak{A}^*$. We also present a first applications of the J-strict topology of the multipliers algebra, by showing that under the extra hypothesis that $\mathfrak{A}$ and $\mathfrak{B}$ are $\sigma$-unital JB$^*$-algebras, every surjective Jordan $^*$-homomorphism (respectively, triple homomorphism or continuous orthogonality preserving operator) from $\mathfrak{A}$ onto $\mathfrak{B}$ admits an extension to a surjective J-strict continuous Jordan $^*$-homomorphism (respectively, triple homomorphism or continuous orthogonality preserving operator) from $M(\mathfrak{A})$ onto $M(\mathfrak{B})$.
	\end{abstract}

	\maketitle
	\thispagestyle{empty}
	
\section{Introduction}

Multipliers of C$^*$-algebras constitute one of the deepest studied topics in this theory. Multitude of references have been devoted to this object. Let us briefly recall that for each C$^*$-algebra $A$, the \emph{multipliers algebra} $M(A)$ of $A$ can be defined as the idealizer of $A$ in its bidual $A^{**}$ (i.e., the largest
C$^*$-subalgebra of the von Neumann algebra $A^{**}$ containing $A$ as an ideal),  equivalently, $$M(A) =\left\{ x\in A^{**} : x A, A x\subseteq A\right\}$$ (cf. \cite{Busby68,AkPedTom73} and \cite[\S 3.12]{Ped}). The multipliers algebra of a C$^*$-algebra is introduced with the aim of finding an appropriate unital extension within the smallest one obtained by adjoining a unit to $A$, and the largest natural one given by its second dual. Clearly, when $A$ is unital everything collapses and $A = M(A)$.\smallskip

In the wider setting of JB- and JB$^*$-algebras, the multipliers algebra was introduced and studied by C. M. Edwards in \cite{Ed80}. It is worth to refresh the basic terminology. A real or complex \emph{Jordan algebra} is a non-necessarily associative algebra $\mathfrak{A}$ over $\mathbb{R}$ or $\mathbb{C}$ whose product (denoted by $\circ$) is commutative and satisfies the so-called \emph{Jordan identity}: $$( x \circ y ) \circ x^2 = x\circ ( y\circ x^2 ) \hbox{ for all } x,y\in \mathfrak{A}.$$ Jordan algebras are \emph{power 
associative}, that is, each subalgebra generated by a single element $a$ is associative, equivalently, by setting $a^0 = \textbf{1}$, and $a^{n+1} = 
a\circ a^n,$ we have $a^n \circ a^m = a^{n+m}$ for all $n,m\in \mathbb{N}\cup \{0\}$. (cf. \cite[Lemma 2.4.5]{HOS} or \cite[Corollary 1.4]{AlfsenShultz2003}). Given an element $a \in \mathfrak{A}$ the symbol $U_a$ will stand for the linear mapping on $\mathfrak{A}$ defined by $$U_a (b) := 2(a\circ b)\circ a - a^2\circ b.$$ One of the fundamental identities in Jordan algebra theory assures that \begin{equation}\label{eq main identity U maps} U_{U_a(b)} = U_a U_b U_a 
\end{equation} for all $a,b$ in a Jordan algebra (cf. \cite[2.4.18]{HOS} or \cite[$(1.16)$]{AlfsenShultz2003}).\smallskip

A Jordan-Banach algebra $\mathfrak{A}$ is a Jordan algebra equipped with a complete norm satisfying $\|a\circ b\|\leq \|a\| \cdot \|b\|$ for all $a,b\in \mathfrak{A}$. A JB$^*$-algebra is a complex Jordan-Banach algebra $\mathfrak{A}$ equipped with an algebra involution $^*$ satisfying the following Jordan version of the Gelfand-Naimark axiom: $$\|U_a ({a^*}) \|= \|a\|^3, \ (a\in \mathfrak{A}).$$ 
A \emph{JB-algebra} is a real Jordan algebra $\mathfrak{J}$ equipped with a complete norm satisfying \begin{equation}\label{eq axioms of JB-algebras} \|a^{2}\|=\|a\|^{2}, \hbox{ and } \|a^{2}\|\leq \|a^{2}+b^{2}\|\ \hbox{ for all } a,b\in \mathfrak{J}.
\end{equation} The key result connecting the notions of JB- and JB$^*$-algebras is due to J. D. M. Wright and affirms that every JB-algebra $\mathfrak{J}$ corresponds uniquely to the self-adjoint part $\mathfrak{A}_{sa}=\{x\in \mathfrak{A} : x^* =x\}$ of a JB$^*$-algebra $\mathfrak{A}$ \cite{Wright77}. A JBW$^*$-algebra (respectively, a JBW-algebra) is a JB$^*$-algebra (respectively, a JB-algebra) which is also a dual Banach space. Every JBW$^*$-algebra (respectively, each JBW-algebra) contains a unit element (see \cite[\S 4]{HOS} or \cite{AlfsenShultz2003}). It is worth to note that JBW-algebras are precisely the self-adjoint parts of JBW$^*$-algebras (see \cite[Theorems 3.2 and 3.4]{Ed80JBW} or \cite[Corollary 2.12]{MarPe2000}).\smallskip 

The class of JB$^*$-algebras attracts more interest by observing that it contains all C$^*$-algebras when equipped with their original norm and involution and the natural Jordan product given by $a \circ b := \frac12 (a b + ba)$. Although the self-adjoint part of a C$^*$-algebra $A$ is not, in general, a subalgebra of $A$, it is always a JB-algebra for the natural Jordan product. The JB$^*$-subalgebras of C$^*$-algebras are called \emph{JC$^*$-algebras}. The statement affirming that the class of JB$^*$-algebras is strictly wider than that of JC$^*$-algebras is confirmed by the existence of exceptional JB$^*$-algebras which cannot be embedded as Jordan $^*$-subalgebras of some $B(H)$ (cf.  \cite[Corollary 2.8.5]{HOS}).
We have already employed in the previous paragraphs the fact that the bidual of each C$^*$-algebra can be equipped with a natural (Arens) product making it a von Neumann algebra with the aim of producing a big unitization. An analogous result is also valid for JB$^*$-algebras, the bidual of each JB$^*$-algebra is a JBW$^*$-algebra, and hence unital (see \cite[Corollary 2.50]{AlfsenShultz2003} or \cite[Theorem 4.4.3]{HOS}). We refer to \cite{AlfsenShultz2003, HOS, Wright77} as the basic references on JB- and JB$^*$-algebras.\smallskip

A \emph{Jordan} homomorphism between Jordan algebras $\mathfrak{A}$ and $\mathfrak{B}$ is a linear mapping $\Phi: \mathfrak{A}\to \mathfrak{B}$ satisfying $\Phi (x\circ y) = \Phi (x) \circ \Phi(y)$, for all $x,y\in \mathfrak{A}$. If additionally $\mathfrak{A}$ and $\mathfrak{B}$ are JB$^*$-algebras and $\Phi (a^*) = \Phi(a)^*$ for all $a\in \mathfrak{A}$ we say that $\Phi$ is a Jordan $^*$-homomorphism. Every Jordan $^*$-homomorphism between JB$^*$-algebras is continuous and non-expansive \cite[Lemma 1]{BaDanHorn}.\smallskip

We can now present the definition of the multipliers algebra given by Edwards in \cite{Ed80}. Let $\mathfrak{A}$ be a JB$^*$-algebra. The \emph{(Jordan) multipliers algebra} of $\mathfrak{A}$ is defined as $$M(\mathfrak{A}):=\{x\in \mathfrak{A}^{**}: x\circ  \mathfrak{A} \subseteq  \mathfrak{A}\}.$$ The set of quasi-multipliers of $\mathfrak{A}$, denoted by $QM(\mathfrak{A})$, is the collection of all $a\in \mathfrak{A}^{**}$ such that $U_a (\mathfrak{A}) \subseteq \mathfrak{A}$. Of course, $M(\mathfrak{A}) =\mathfrak{A}$ when $\mathfrak{A}$ is unital. The space $M(\mathfrak{A})$ is a unital JB$^*$-subalgebra of $\mathfrak{A}^{**}$. Moreover, $M(\mathfrak{A})$ is the (Jordan) idealizer of $\mathfrak{A}$ in $\mathfrak{A}^{**}$, that is, the largest JB$^*$-subalgebra of $\mathfrak{A}^{**}$ which contains $\mathfrak{A}$ as a closed Jordan ideal. It is further known that $M(\mathfrak{A})\subseteq QM(\mathfrak{A})$. An alternative characterization of the multipliers of a JB$^*$-algebra $\mathfrak{A},$ in terms of limits of bounded monotone nets, \cite{Ed80}, reads as follows: every bounded monotone increasing net in $\mathfrak{A}_{sa}^{**}$ possesses a least upper bound in $\mathfrak{A}^{**}.$ For each subset $\mathcal{S}\subseteq \mathfrak{A}$ let $\mathcal{S}^m$ (respectively, $\mathcal{S}_m$) stand for the set of least upper (respectively, greatest lower) bounds of monotone increasing (respectively, decreasing) nets in $\mathcal{S}$, and let $\mathcal{S}^{-}$ denote the norm closure of $\mathcal{S}$. If we write $\tilde{\mathfrak{A}}_{sa} = \textbf{1} \mathbb{R}+ \mathfrak{A}_{sa}$, where $\textbf{1}$ in the unit of $\mathfrak{A}^{**}$, we have $$ M(\mathfrak{A}_{sa}) = \left( \tilde{\mathfrak{A}}_{sa} \right)^m \cap \left( \tilde{\mathfrak{A}}_{sa} \right)_m \hbox{ and } QM(\mathfrak{A}_{sa}) = \left(\left( \tilde{\mathfrak{A}}_{sa} \right)^m\right)^{-} \cap \left(\left( \tilde{\mathfrak{A}}_{sa} \right)_m\right)^{-},$$ a result established for C$^*$-algebras by G. K. Pedersen (cf. \cite[Theorem 2.5]{Ped72}).\smallskip

Despite of the multiple applications of the multipliers algebra in the setting of JB$^*$-algebras (see, for example, \cite{BeRo, BuChu92, EssPer2021, GarPe2021JB, GhoHeja19}), there are certain topological aspect of this object which have not been explored and remain hidden. 
Perhaps the most interesting is a detailed study of an appropriate Jordan version of the strict topology in the mutliplier algebra. We must admit that the required tools have not been available until quite recently. 
In the case of a C$^*$-algebra $A$, the \emph{C$^*$-strict topology} of $M(A)$ (denoted by $S(M(A),A)$) is the topology defined by all the semi-norms of the from $\lambda_a (x) := \|a x \|$ and  $\rho_a (x) := \|x a\|$ ($x\in M(A)$), where $a$ runs freely in $A$. 
There is a clear motivation for this topology, for each $x\in M(A)$, the mappings $L_x,R_x: A \to A$ are bounded linear operators in the Banach algebra, $B(A)$ of all bounded linear operators in $A$. Clearly, $\|L_x\| = \|L_x^{**}\| =\|x\|,$ where $L_x^{**} : A^{**}\to A^{**}$ is the bitranspose of $L_x$, and similarly for $R_x$. Under these identifications of $M(A)$ as subalgebra of $B(A)$, the C$^*$-strict topology of $M(A)$ is nothing but the restriction to $M(A)$ of the strong operator topology (SOT) of $B(A)$ to $M(A)$ (see \cite[\S VI.1]{DunSchwaI}).  In the commutative setting, the multipliers algebra of the C$^*$-algebra $C_0(L),$ of all complex continuous functions on a locally compact Hausdorff space $L$ vanishing at infinity is the C$^*$-algebra $C_b(L)$ of all bounded continuous functions on $L$, and the C$^*$-strict topology on $C_0(L)$ is the locally convex topology generated by the seminorms of the form $p_{\varphi} (f) =\|\varphi f\|$ ($f\in C_b(L)$) with $\varphi\in C_0(L)$ \cite[page 108. Problem 21]{Conw1990}.\smallskip

R.C. Busby showed that $A$ is $S(M(A),A)$-dense in $M(A)$ and the latter is $(M(A), S(M(A),A))$-complete. D. C. Taylor added that, for each C$^*$-algebra $A$, the dual of the l.c.s. $(M(A), S(M(A),A))$ with the topology of uniform convergence on $S(M(A),A)$-bounded subsets of $M(A)$ is a Banach space isometrically isomorphic to the dual of $A$ \cite{Tay70} (a conclusion due to R.C. Buck in the setting of commutative C$^*$-algebra \cite[page 119, Exercise 6]{Conw1990}).\smallskip

In this paper we introduce the J-strict topology of the multipliers algebra of a JB$^*$-algebra $\mathfrak{A}$. We define the \emph{J-strict topology} of $M(\mathfrak{A})$ (denoted by $S(M(\mathfrak{A}),\mathfrak{A})$) as the locally convex topology generated by the seminorms $\{ \rho_a : a\in \mathfrak{A}\},$ where $\rho_a(z)=\|z\circ a\|$ ($z\in M(\mathfrak{A})$), that is, the topology on $M(\mathfrak{A})$ induced by the SOT of $B(\mathfrak{A})$ (better said of $B(\mathfrak{A}_{sa})$) restricted to $M(\mathfrak{A})$ when the elements of $M(\mathfrak{A})_{sa} = M(\mathfrak{A}_{sa})$ are identified with the Jordan multiplication operator that they define, that is, $x\in M(\mathfrak{A})_{sa}\leftrightarrow M_x : \mathfrak{A}\to \mathfrak{A},$ $M_x (a) = x\circ a$. When $\mathfrak{A}$ is unital everything trivializes since $M(\mathfrak{A}) = \mathfrak{A}$ and the strict topology is the norm topology. We note that in case that a C$^*$-algebra $A$ is regarded as a JB$^*$-algebra, its multipliers algebra, $M(A)$, is nothing but its Jordan multipliers algebra \cite{Ed80}, and the J-strict topology on $M(A)$ agrees with the C$^*$-strict topology (cf. Lemma~\ref{l the Jordan and the strict topologies coincide on C*-algebras}). \smallskip

In our first results we prove that every JB$^*$-algebra $\mathfrak{A}$ is J-strictly dense in its multipliers algebra $M(\mathfrak{A})$ (cf. Proposition~\ref{p strict density in the multipliers}). By employing a recent study on continuous linear mappings that are triple derivable at orthogonal pairs from \cite{EssPer2021}, we establish that $M(\mathfrak{A})$ is precisely the J-strict completion of $M(\mathfrak{A})$ (see Theorem~\ref{t the strict topology is complete}). In propositions~\ref{p extensions of surjective Jordan homomorphisms} and~\ref{p extensions of surjective triple homomorphisms} and Corollary~\ref{c extension of OP } we discuss the possibility of finding J-strict continuous extensions of surjective continuous Jordan homomorphisms, triple homomorphisms and continuous linear orthogonality preserving operators between JB$^*$-algebras to similar types of maps between their multipliers algebras. It is known that the extensions are not, in general, surjective (cf. \cite[3.12.11]{Ped} and \cite[Proposition 6.8]{LanceBook} for counterexamples appearing in the case of C$^*$-algebras).\smallskip

Section~\ref{sec:3 Taylor topological dual of Multipliers with the strict topology} is completely devoted to the study of those functionals on the multipliers algebra of a JB$^*$-algebra $\mathfrak{A}$ which are continuous with respect to the J-strict topology. The key characterization, which is a Jordan extension of the result by Taylor in \cite[Corollary 2.2]{Tay70}, appears in Proposition~\ref{p dual of M(A) with strict topology} where we establish that a functional $\phi : M(\mathfrak{A})\to \mathbb{C}$ is J-strict continuous if and only if there exist  $a\in \mathfrak{A}$ and $\varphi\in \mathfrak{A}^*$ such that $\phi (x) = \varphi \left( U_a (x) \right)$ for all $x\in M(\mathfrak{A})$. Furthermore, in coherence with what is known for C$^*$-algebras \cite{Tay70}, the dual of $(M(\mathfrak{A}), S(M(\mathfrak{A}),\mathfrak{A}))$, is isometrically isomorphic to $\mathfrak{A}^*$ (see Theorem~\ref{t dual of multipliers isomorphic to the dual of A}).  The tools in this section include a Cohen factorization type theorem for the dual space of a JB$^*$-algebra $\mathfrak{A}$ regarded as a Jordan-Banach module (see Corollary~\ref{c dual factorizes} and \cite{AkkLaa95}). In Proposition~\ref{p necesary condition for strict equicontinuity} we establish a necessary condition for a family of J-strict continuous functionals on the multipliers algebra of a JB$^*$-algebra to be J-strict equicontinuous.\smallskip

As we have commented before, surjective Jordan $^*$-homomorphisms, triple homomorphisms and continuous linear orthogonality preserving operators between JB$^*$-algebras can be extended to similar types of maps between their multipliers algebras, however the extension is not, in general surjective. In the setting of C$^*$-algebras G.K. Pedersen proved that every surjective $^*$-homomorphism between $\sigma$-unital C$^*$-algebras extends to a surjective $^*$-homomorphism between their corresponding multipliers algebras (see \cite[Theorem 10]{PedSAW} or \cite[Proposition 3.12.10]{Ped}). The main goal in section~\ref{sec 4: surjective extensions} is to prove an appropriate version of Pedersen's result for JB$^*$-algebras, the desired conclusions are achieved in Theorem~\ref{t extension of onto Jordan starhom sigma unital}, Proposition \ref{p extension of onto triple hom sigma unital} and Corollary \ref{c extension of OP surjective with strictly positive element}. Some special tools have been developed to prove the results, we highlight among them an intermediate value type theorem for Jordan $^*$-epimorphisms between JB$^*$-algebras, which proves that given a Jordan $^*$-epimorphism between JB$^*$-algebras $\Phi:\mathfrak{A}\to \mathfrak{B}$, and elements $0\leq b \leq d$ in $\mathfrak{B}$ with $\Phi(c)=d$ for some $c\geq 0$, then there exists $a\in \mathfrak{A}$ such that $0\leq a\leq c$ and $b=\Phi(a)$ (cf. Theorem~\ref{t intermediate value theorem for positive jordan star hom}).\smallskip

The paper culminates with two open questions. As we shall observe in the final section, for each C$^*$-algebra $A$ the product of $M(A)$ is separately strict continuous and jointly strict continuous on bounded sets. The validity of these two statements, specially the second one, is an open problem of indubitable interest in the case of JB$^*$-algebras. 

\section{The completion of a JB$^*$-algebra with respect to the strict topology}\label{sec:2 Busby}

Our first goal is to prove the density of every JB$^*$-algebra in its multipliers algebra with respect to the J-strict topology.  For the proof we shall refine some of the arguments in \cite[Theorem 9]{Ed80}. \smallskip

As in the case of C$^*$-algebras, the JB$^*$-subalgebra generated by a single hermitian element in a JB$^*$-algebra is (isometrically) Jordan $^*$-isomorphic to a commutative C$^*$-algebra (see \cite[Theorems 3.2.2 and 3.2.4]{HOS} or \cite[Corollary 1.19]{AlfsenShultz2003}). In particular the continuous functional calculus for hermitian elements makes perfect sense (see \cite[Proposition 1.21]{AlfsenShultz2003}). We shall employ these properties without any explicit mention. \smallskip

We recall for later purposes that for each self-adjoint element $a$ in a JB$^*$-algebra $\mathfrak{A}$ the mapping $U_a$ is a positive operator, that is, it preserves positive elements (cf. \cite[Proposition 3.3.6]{HOS} or \cite[Theorem 1.25]{AlfsenShultz2003}).\smallskip

Let $\mathfrak{A}$ be a JB$^*$-algebra and $J\subseteq \mathfrak{A}$ a (norm closed) subspace of $\mathfrak{A}$. We shall say that $J$ is a (closed Jordan) ideal of $\mathfrak{A}$ if $J\circ \mathfrak{A} \subseteq J$. A Jordan ideal $J$ of $\mathfrak{A}$ is called \emph{essential} in $\mathfrak{A}$ if every non-zero closed Jordan ideal in $\mathfrak{A}$ has non-zero intersection with $J$. Every closed Jordan ideal of a JB$^*$-algebra is self-adjoint (cf. \cite[Theorem 17]{Young78} or \cite[Proposition 3.4.13]{CabRod2014}).  A subspace $I$ of $\mathfrak{A}$ is said to be a \emph{quadratic ideal} of $\mathfrak{A}$ if, for each element $a$ in $\mathfrak{A}$ and
each pair $b_1, b_2$ of elements in $I$, the element $U_{b_1, b_2} (a) = (b_1\circ a)\circ b_2 + (b_2\circ a)\circ b_1 - (b_1\circ b_2)\circ a$ lies in $I$, equivalently, for each element $a$ in $\mathfrak{A}$ and each element $b$
in $I$, $U_b (a)$ lies in $I$.\smallskip

\begin{proposition}\label{p strict density in the multipliers} Every JB$^*$-algebra $\mathfrak{A}$ is J-strict dense in $M(\mathfrak{A}).$
\end{proposition}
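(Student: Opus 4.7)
My approach is to combine weak-$*$ density of $\mathfrak{A}$ in $\mathfrak{A}^{**}$ with Mazur's theorem, sidestepping a direct Jordan-identity manipulation. First fix an increasing bounded approximate unit $(e_\lambda) \subseteq \mathfrak{A}^+_{sa}$ with $0 \leq e_\lambda \leq \textbf{1}$ in $M(\mathfrak{A})$ and $\|e_\lambda \circ a - a\| \to 0$ for every $a \in \mathfrak{A}$; such a unit exists in every JB$^*$-algebra. A standard verification, using that $\mathfrak{A}^{**}$ is a JBW$^*$-algebra with separately weak-$*$ continuous Jordan product, shows $e_\lambda \to \textbf{1}$ in the weak-$*$ topology of $\mathfrak{A}^{**}$: the monotone net has weak-$*$ supremum $e \leq \textbf{1}$ satisfying $e \circ a = a$ for all $a \in \mathfrak{A}$, and weak-$*$ density of $\mathfrak{A}$ in $\mathfrak{A}^{**}$ upgrades this to $e \circ b = b$ for all $b \in \mathfrak{A}^{**}$, forcing $e = \textbf{1}$.

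For $x \in M(\mathfrak{A})$, set $y_\lambda := x \circ e_\lambda$. The multiplier property $M(\mathfrak{A}) \circ \mathfrak{A} \subseteq \mathfrak{A}$ gives $y_\lambda \in \mathfrak{A}$ with $\|y_\lambda\| \leq \|x\|$. Separate weak-$*$ continuity of the Jordan product yields $y_\lambda \to x$ weak-$*$ in $\mathfrak{A}^{**}$, and for each fixed $a \in \mathfrak{A}$ the same reasoning gives $y_\lambda \circ a \to x \circ a$ weak-$*$; since both sides lie in $\mathfrak{A}$, this is weak convergence in the Banach space $\mathfrak{A}$.

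Given finitely many $a_1, \ldots, a_n \in \mathfrak{A}$ and $\varepsilon > 0$, the tuples $(y_\lambda \circ a_i)_{i=1}^n$ converge weakly in $\mathfrak{A}^n$ to $(x \circ a_i)_{i=1}^n$. Mazur's theorem produces a convex combination $y = \sum_\alpha c_\alpha y_{\lambda_\alpha} \in \mathfrak{A}$ with $\max_i \|y \circ a_i - x \circ a_i\| < \varepsilon$; by linearity of $\cdot \circ a_i$, the convex combination distributes through each slot simultaneously. Since such sets form a neighborhood base of $x$ in the J-strict topology, $\mathfrak{A}$ is J-strict dense in $M(\mathfrak{A})$.

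The main obstacle is that this route passes through convex combinations rather than a canonical net. A direct Busby-style argument, aiming to show $(x \circ e_\lambda)$ or $(U_{e_\lambda}(x))$ itself converges J-strictly, reduces to bounding the Jordan associator $[x, e_\lambda, a] := (x \circ e_\lambda) \circ a - x \circ (e_\lambda \circ a)$ in norm as $e_\lambda \nearrow \textbf{1}$. In an associative (C$^*$) setting this associator equals $\tfrac{1}{4}[e_\lambda, [x, a]]$ and tends to zero by the BAU acting on $[x, a] \in \mathfrak{A}$; in a general Jordan algebra, analogous quantitative control requires the polarized Jordan identity $[L_\alpha, L_{\beta\gamma}] + [L_\beta, L_{\gamma\alpha}] + [L_\gamma, L_{\alpha\beta}] = 0$ applied with $\alpha = x$, $\beta = \gamma = e_\lambda$, and is exactly the refinement of the arguments of \cite{Ed80} referred to in the introduction.
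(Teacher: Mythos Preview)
Your argument is correct and takes a genuinely different route from the paper's. The paper works on $M(\mathfrak{A})_{sa}$ and invokes Edwards' monotone approximation: for $b\in M(\mathfrak{A})_{sa}$ one has increasing nets $(c_j)$ and $(-d_k)$ in $\tilde{\mathfrak{A}}_{sa}$ with least upper bounds $b$ and $-b$; Dini's theorem gives $\|U_a(d_k-c_j)\|\to 0$, and the geometric inequality $\|a\circ(b-c_j)\|^2\le\|b-c_j\|\,\|U_a(b-c_j)\|$ from \cite[Lemma~3.5.2(ii)]{HOS} then forces $\rho_a(b-c_j)\to 0$. So the paper produces an explicit J-strict convergent net inside $\tilde{\mathfrak{A}}_{sa}$, at the cost of that order-theoretic and Jordan-geometric input.

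Your approach trades constructivity for softness: by passing through weak-$^*$ convergence of $y_\lambda=x\circ e_\lambda$ and Mazur's theorem in $\mathfrak{A}^n$, you obtain density without ever controlling an associator or invoking the HOS inequality, but you only get convex combinations rather than a canonical net. That is perfectly adequate for the statement as written. It is worth noting that the ``obstacle'' you flag in your final paragraph --- showing directly that $(e_\lambda\circ x)_\lambda\to x$ J-strictly --- is in fact resolved later in the paper (Lemma~\ref{l strict convergence of Jordan times an approximate unit}) via the observation that $M_{ie_\lambda}$ is a triple derivation, which handles the associator term cleanly and without the polarized Jordan identity you mention.
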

 
\begin{proof} The desired conclusion is almost explicit in the proof of \cite[Theorem 9]{Ed80}. Let us fix $b$ in $M(\mathfrak{A})_{sa}$. As in the proof of \cite[Theorem 9$(i)$]{Ed80}, we can get two increasing nets $(c_j)$ and $(-d_k))$ in $\tilde{\mathfrak{A}}_{sa}$ with least upper bounds $b$ and $-b$, respectively, and $c_j\leq b\leq d_k$ for all $j,k$. For each $a\in \mathfrak{A}_{sa},$ an appropriate application of Dini's theorem to the decreasing net $(U_a (d_k-c_j))$ whose greatest lower bound is zero, asserts that $\displaystyle \lim_{j,k} \| U_a(d_k- c_j)\|= 0$. One of the fundamental geometric inequalities in the theory of JB-algebras (see \cite[Lemma 3.5.2$(ii)$]{HOS}) proves that $$\rho_a (b-c_j)^2 = \|a\circ (b-c_j)\|^2 \leq \|U_a (b-c_j)\| \|b-c_j\| \leq  \| U_a(d_k- c_j)\|   \|b-c_j\| \to_{j,k} 0.$$ By linearity we also get $\rho_a (b-c_j)^2 \to 0$ for all $a\in \mathfrak{A}$. This shows that every $b$ in $M(\mathfrak{A})_{sa}$ lies in the J-strict closure of $\mathfrak{A}_{sa}$. The rest is clear from the identity $\mathfrak{A} = \mathfrak{A}_{sa}\oplus i \mathfrak{A}_{sa}$. 
\end{proof}

Our next goal is to study the completeness of the J-strict topology (cf.  \cite{Busby68}). In this case, the literature contains no forerunners in the Jordan setting nor close approaches. The proof will require recently developed tools.  The arguments in the Jordan setting are based on a much elaborated proof. \smallskip

We recall some basic notions on derivations. A {\it derivation} from a Banach algebra $A$ into a Banach
$A$-module $X$ is a linear map $D: A\to X$ satisfying $D(a b) = D(a) b +a  D(b),$ ($a\in A$). A {\it Jordan derivation} from $A$ into $X$ is a linear map $D$ satisfying $D(a^2) = a D(a) + D(a)
a,$ ($a\in A$), or equivalently, $D(a\circ b)=a\circ D(b)+ D(a)\circ b$ ($a,b\in A$), where $a\circ b = \frac{a b+b a}{2},$ whenever $a,b\in A$, or one of $a,b$ is in $A$ and the other is in $X$. Let $x$ be an element of $X$,
the mapping $\hbox{adj}_{x} :A \to X$, $a\mapsto \hbox{adj}_{x} (a) := x a -  a x$, is an example of a derivation from $A$ into $X$. A derivation $D: A\to X$ is said to be \emph{inner} when it can be written in the form $D = \hbox{adj}_{x}$ for some $x\in X$.\smallskip

Following \cite{LiPan} we shall say that a linear mapping $G$ from a unital Banach algebra $A$ to a (unital) Banach $A$-bimodule $X$ is a \emph{generalized derivation} if it satisfies $$G (ab) = G(a) b + a G(b) - a G(1) b,$$ for all $a,b$ in $A$. The definition for non-necessarily unital Banach algebras and modules was studied in \cite[\S 4]{AlBreExVill09}. Concretely, a \emph{generalized derivation} from a Banach algebra $A$ to a Banach $A$-bimodule $X$ is a linear mapping $D: A \to X$ for which there exists $\xi\in  X^{**}$ satisfying $$D(ab) = D(a)  b + a  D(b) - a \xi b \hbox{ ($a, b \in A$).}$$ 

Clearly, every derivation is a generalized derivation, while there exist generalized derivations which are not derivations. For example, given $x$ in $X$, the Jordan multiplication operator $G_{x} :A \to X$, $x\mapsto G_{x} (a):= a x + x a$, is a generalized derivation from $A$ into $X$. If $A$ is a C$^*$-algebra $A$ and $a$ is an element in $A$ with $a^* \neq -a$, the mapping $G_a : A \to A$ is a generalized derivation which is not a derivation (cf. \cite[comments after Lemma 3]{BurFerGarPe2014}).\smallskip

Suppose now that $X$ is a Jordan-Banach module over a Jordan-Banach algebra $\mathcal{J}$ (see \cite{HejNik96, HoPeRu, PeRu2014} for the detailed definition of Jordan-Banach module). We shall say that a linear mapping $D: \mathcal{J}\to X$ is a \emph{Jordan derivation} if the identity $$D(a\circ b ) = D(a)\circ b + a \circ D(b)$$ holds for all $a,b\in\mathcal{J}$. Given $x\in X$ and $a\in \mathcal{J}$, we set $L(a) (x) =  a\circ x$ and $L(x)(a) = a\circ x$. By a little abuse of notation, we also denote by $L(a)$ the operator on $\mathcal{J}$ defined by $L(a) (b)  = a\circ b$. Let us fix $a\in \mathcal{J}$ and $x\in X$, the mapping $$[L(x), L(a)] = L(x) L(a) -L(a) L(x) : \mathcal{J} \to X, \ b\mapsto [L(x), L(a)] (b),$$ is a Jordan derivation. 
A linear mapping $G: \mathcal{J}\to X$ will be called a \emph{generalized Jordan derivation} if we can find $\xi\in X^{**}$ satisfying \begin{equation}\label{eq generalized Jordan derivation} G (a\circ b) = G(a)\circ b + a\circ G(b) - U_{a,b} (\xi ),
\end{equation} for every $a,b$ in $\mathcal{J}$ (cf. \cite{AlBreExVill09, BurFerGarPe2014, BurFerPe2013}).\smallskip

Concerning automatic continuity, B. Russo and the last author of this note showed in \cite[Corollary 17]{PeRu2014} that every Jordan derivation from a C$^*$-algebra $A$ into a Banach $A$-bimodule or into a Jordan-Banach $A$-module is continuous. It is further known that every Jordan derivation from a JB$^*$-algebra $\mathfrak{A}$ into $\mathfrak{A}$ or into $\mathfrak{A}^*$ is automatically continuous (cf. \cite[Corollary 2.3]{HejNik96} and also \cite[Corollary 10]{PeRu2014}). To complete the state-of-the-art, we note that F.B. Jamjoom, A. Siddiqui and the last author of this paper proved in  that \cite[Proposition 2.1]{JamPeSidd2015} that every generalized Jordan derivation from a JB$^*$-algebra $\mathfrak{A}$ into $\mathfrak{A}$ or into $\mathfrak{A}^{**}$ is automatically continuous.\smallskip

As we shall comment later, Jordan derivations and generalized Jordan derivations do not suffice to prove the completeness of the multipliers algebra with respect to the J-strict topology. For this purpose the appropriate tools are triple derivations and the structure of JB$^*$-triple underlying every JB$^*$-algebra. A \emph{JB$^*$-triple} is a complex Banach space $E$ equipped with a continuous triple product $\{\cdot,\cdot,\cdot\}:E\times E\times E\rightarrow E$ which is linear and symmetric in the outer variables, conjugate
linear in the middle one and satisfies the following conditions:
\begin{enumerate}[(JB$^*$-1)]
	\item (Jordan identity) for $a,b,x,y,z$ in $E$,
	$$\{a,b,\{x,y,z\}\}=\{\{a,b,x\},y,z\}
	-\{x,\{b,a,y\},z\}+\{x,y,\{a,b,z\}\};$$
	\item $L(a,a):E\rightarrow E$ is an hermitian
	(linear) operator with non-negative spectrum, where $L(a,b)(x)=\{a,b,x\}$ with $a,b,x\in
	E$;
	\item $\|\{x,x,x\}\|=\|x\|^3$ for all $x\in E$,
\end{enumerate} see \cite{Ka} for the original reference and holomorphic motivation of the model. We also refer to the monographs \cite{CabRod2014, Chu2012} for the basic background on JB$^*$-triples. To let the reader have and idea of the size of the class of JB$^*$-triples, we shall simply observe that it is strictly wider than the classes of C$^*$- and JB$^*$-algebras. The triple products inducing an structure of JB$^*$-triple on the Banach spaces in the just mentioned categories are given by $\{a,b,c\} := \frac12 (a b^* c + c b^* a)$ and $\{ x,y,z\} = (x\circ y^*) \circ z + (z\circ y^*)\circ x -
(x\circ z)\circ y^*,$ respectively.\smallskip

In the sequel we need to deal with tripotents in JB$^*$-algebras and JB$^*$-triples, a notion that generalizes the concept of partial isometry in C$^*$-algebras. Each element $e$ in a JB$^*$-triple $E$ satisfying  $\{e,e,e\} =e$ is called a \emph{tripotent}. Associated with each tripotent $e\in E$ we find a  \emph{Peirce decomposition} of $E$ in the form $$E= E_{2} (e) \oplus E_{1} (e) \oplus E_0 (e),$$ where for
$j=0,1,2,$ $E_j (e)$ is the $\frac{j}{2}$ eigenspace of the operator $L(e,e)$. 
It is worth to note that the Peirce 2-subspace $E_2 (e)$ is a JB$^*$-algebra with respect to the Jordan product and involution given by $$\hbox{$x\circ_e y := \J xey$ and $x^{*_e} := \J	exe$,}$$ respectively (cf. \cite[\S 4.2.2, Fact 4.2.14 and Corollary 4.2.30]{CabRod2014}). A tripotent $u$ in $E$ is called unitary if $E_2(u) = E$. There is a one-to-one identification among JB$^*$-triples containing a unitary element and unital JB$^*$-algebras.\smallskip

Each unitary $u$ in a unital JB$^*$-algebra $\mathfrak{A}$ induces a new (Jordan) product and an involution defined by $x\circ_u y :=U_{x,y}(u^*)=\{x,u,y\},$ and $x^{*_u} :=U_u(x^*)=\{u,x,u\}$, respectively, in which $u$ acts as the unit element. This new unital JB$^*$-algebra $M(u)= (M,\circ_u,*_u)$ is called the \emph{$u$-isotope} of $M$. In general, and contrary to what happens in the case of C$^*$-algebras, two different tripotents can give rise to completely different JB$^*$-algebras, more concretely, there exist examples of unital JB$^*$-algebras $\mathfrak{A}$ admitting two unitaries $u_1$ and $u_2$ for which we cannot find a surjective linear isometry on $\mathfrak{A}$ mapping $u_1$ to $u_2$ --recall that every Jordan $^*$-isomorphism between JB$^*$-algebras is isometric-- (cf. \cite[Example 5.7]{BraKaUp78}). Although different unitaries in a unital JB$^*$-algebra $\mathfrak{A}$ can produce very different JB$^*$-algebras with the same underlying Banach space, by a celebrated theorem of W. Kaup (see \cite[Proposition 5.5]{Ka}), the triple product is essentially unique in the sense that it satisfies \begin{equation}\label{eq uniqueness of the triple product} \begin{aligned} \J xyz & = (x\circ_{u_1} y^{*_{u_1}}) \circ_{u_1} z + (z\circ_{u_1} y^{*_{u_1}})\circ_{u_1} x -
		(x\circ_{u_1} z)\circ_{u_1} y^{*_{u_1}}\\
		&= (x\circ_{u_2} y^{*_{u_2}}) \circ_{u_2} z + (z\circ_{u_2} y^{*_{u_2}})\circ_{u_2} x -
		(x\circ_{u_2} z)\circ_{u_2} y^{*_{u_2}},
	\end{aligned}
\end{equation} for all $x,y,z\in \mathfrak{A}$ and every couple of unitaries $u_1,u_2$ in $\mathfrak{A}$.\smallskip

A subspace $B$ of a JB$^*$-triple $E$ is a JB$^*$-subtriple of $E$ if $\{B,B,B\}\subseteq B$. 
A (closed) \emph{triple ideal} or simply an \emph{ideal} of $E$ is a (norm closed) subspace $I\subseteq E$ satisfying $\{E,E,I\}+\{E,I,E\}\subseteq I$, equivalently, $\{E,E,I\}\subseteq I$ or $\{E,I,E\}\subseteq I$ or $\{E,I,I\}\subseteq I$ (see \cite[Proposition 1.3]{BuChu92}).  Since every closed Jordan ideal $I$ of a JB$^*$-algebra $\mathfrak{A}$ is self-adjoint, it is also a triple ideal. In a JB$^*$-algebra closed Jordan ideals and closed triple ideals coincide.\smallskip

A JB$^*$-subtriple $I$ of $E$ is called an \emph{inner ideal} of $E$ if $\{I,E,I\}\subseteq I$. A subspace $I$ of a C$^*$-algebra $A$ is an \emph{inner ideal} if $I A I \subseteq I$. Every hereditary $\sigma$-unital C$^*$-subalgebra of a C$^*$-algebra is an inner ideal. A complete study on inner ideals of JB$^*$-triples is available in \cite{EdRu92} and the references therein.\smallskip

By local Gelfand theory, the JB$^*$-subtriple $E_a$, generated by a single element $a$ in a JB$^*$-triple $E$ identifies with a commutative C$^*$-algebra admiting $a$ as positive generator (cf. \cite[Corollary 1.15]{Ka}). Consequently, every element in a JB$^*$-triple admits a cubic root and a $(2n-1)$th-root ($n\in \mathbb{N}$) belonging to the JB$^*$-subtriple that it generates. The sequence $(a^{[\frac{1}{2n-1}]})$ of all  $(2n-1)$th-roots of $a$ converges in the weak$^*$ topology  of $E^{**}$ to a tripotent in $E^{**},$ denoted by $r_{{E^{**}}}(a)$, and called the \emph{range tripotent} of $a$. The tripotent $r_{{E^{**}}}(a)$ is the smallest tripotent $e\in E^{**}$ satisfying that $a$ is positive in the JBW$^*$-algebra $E^{**}_{2} (e)$ (compare \cite[Lemma 3.3]{EdRu88}). 
 In case that $a$ is a positive element in a JB$^*$-algebra $M$, the range tripotent of $a$ in $M^{**}$ is a projection, called the \emph{range projection} of $a$ in $M^{**}.$ \smallskip

For each element $a$ in a JB$^*$-triple $E$, we shall denote by $E(a)$ the norm closure of $\J aEa = Q(a) (E)$ in $E$. It is known that $E(a)$ is precisely the norm-closed inner ideal of $E$ generated by
$a$. Clearly, $E_a\subset E(a)$ \cite{BunChuZal2000}.  It is also proved in \cite{BunChuZal2000} that $E(a)$ is a JB$^*$-subalgebra of the JBW$^*$-algebra $E(a)^{**} = \overline{E(a)}^{w^*} = E^{**}_{2} (r_{{E^{**}}}(a))$ and contains $a$ as a positive element, where $r_{{E^{**}}}(a)$  is the range tripotent of $x$ in $E^{**}$ (cf. \cite[Proposition 2.1]{BunChuZal2000}).\label{inner ideal}
\smallskip

A triple homomorphism between JB$^*$-triples $E$ and $F$ is a linear mapping $T : E\to F$ satisfying $$T\{a,b,c\} = \{T(a),T(b),T(c)\}, \hbox{ for all $a,b,c\in E$.}$$ A result in the folklore of the theory asserts that every unital triple homomorphism between unital JB$^*$-algebras is a Jordan $^*$-homomorphism (i.e. it preserves Jordan products and involution).\smallskip

A JBW$^*$-triple is a JB$^*$-triple which is also a dual Banach space. JBW$^*$-triples admit a unique isometric predual and their triple product is separately weak$^*$ continuous \cite{BaTi}. Examples of JBW$^*$-triples can be given by just taking the bidual of every JB$^*$-triple \cite{Di86}. Obviously, the class of JBW$^*$-triples includes all von Neumann algebras and JBW$^*$-algebras.\smallskip

Elements $a,b$ in a JB$^*$-triple are called \emph{orthogonal} (denoted $a\perp b$) if $L(a,b)=0$ (see \cite[Lemma 1]{BurFerGarMarPe2008} for other equivalent reformulations). We say that a mapping between JB$^*$-triples is \emph{orthogonality preserving} if it maps orthogonal elements to orthogonal elements.\smallskip

It is perhaps worth to observe that the J-strict topology on the multipliers algebra of a JB$^*$-algebra $\mathfrak{A}$ is a Hausdorff topology. It suffices to show that for $x = h +i k\in M(\mathfrak{A})$ ($h,k\in M(\mathfrak{A})_{sa}$) the condition $x \circ \mathfrak{A} =\{0\}$ implies $x=0$. There are several arguments to obtain it. The first one follows from the separate weak$^*$-continuity of the Jordan product of $\mathfrak{A}^{**}$ and the weak$^*$-density of $\mathfrak{A}$ in its bidual which combined with our assumptions give $h^2 +k^2 =x\circ x^* =0$ and thus $h=k=x=0$. Alternatively, $x\circ \mathfrak{A}=\{0\}$ is equivalent to $h\circ \mathfrak{A}=\{0\} =k\circ \mathfrak{A}=\{0\}$. It is not hard to check that, under these conditions, $h,k\perp \mathfrak{A}$ (see \cite[Lemma 4.1]{BurFerGarPe09}), and thus $x$ lies in the quadratic annihilator of $\mathfrak{A}$ in $M(\mathfrak{A})$, that is, in the set $\mathfrak{A}^{\perp_q} = \{y\in M(\mathfrak{A}) : U_y (\mathfrak{A}) =\{0\} \}$. Since $\mathfrak{A}^{\perp_q}$ is a closed quadratic ideal in $M(\mathfrak{A})$ whose intersection with $\mathfrak{A}$ is clearly zero, and the intersection of $\mathfrak{A}$ with each non-zero quadratic ideal of $M(\mathfrak{A})$ is non-zero (cf. \cite[Proposition 5]{Ed80}), it necessarily follows that $x=0$.  \smallskip

A linear mapping $\delta$ on a JB$^*$-triple $E$ is called a \emph{triple derivation} if it satisfies $$\delta\{a,b,c\} = \{\delta(a), b, c\} + \{a,\delta(b), c\} + \{a,b,\delta(c)\},$$ for all $a,b,c\in E$. Following \cite{EssPer2021} we shall say that a linear mapping $T:E\to E$ is \emph{triple derivable at orthogonal pairs} if 
$$0=\{T(a),b,c\}+\{a,T(b),c\}+\{a,b,T(c)\}$$ for those $a,b,c\in E$ with $a\perp b$. Clearly, every triple derivation enjoys this property.\smallskip 

According to the usual notation, for each linear mapping $T$ on a JB$^*$-algebra $\mathfrak{A}$, we define another linear mapping $T^{\#}:\mathfrak{A}\to \mathfrak{A}$ given by $T^{\#}(a)=T(a^*)^*.$ The mapping $T$ is called symmetric (respectively, anti-symmetric) if $T^{\#}=T$ (respectively, $T^{\#}=-T$). There are fine connections between associative, Jordan and triple derivations, whenever the connections are possible (see \cite[\S 3]{HoPeRu}). Every associative  $^*$-derivation on a C$^*$-algebra $A$ is a triple derivation. On the other direction, if $A$ is unital, for each triple derivation $\delta: A\to A,$ the element $\delta (\textbf{1})$ is skew symmetric in $A$ and the mapping $(\delta - M_{\delta(\textbf{1})}) (x) = \delta(x) - \frac{\delta(\textbf{1}) x + x \delta (\textbf{1}) }{2}$ is a Jordan $^*$-derivation on $A$ (cf. \cite[Lemmata 1 and 2]{HoMarPeRu}). The same conclusions actually hold for Jordan derivations on a unital JB$^*$-algebra. \smallskip

It is shown in \cite[Proposition 4.4]{EssPer2021} that if $T:\mathfrak{A}\to \mathfrak{A}$ is an anti-symmetric linear mapping on a JB$^*$-algebra which is triple derivable at orthogonal pairs, then $T$ is a triple derivation and there exists $z\in M(\mathfrak{A})$ with $z^*=-z$ such that $T(a)=M_{z} (a) =z\circ a.$ This conclusion applies, in particular, to anti-symmetric triple derivations on $\mathfrak{A}.$ Conversely, if $z\in M(\mathfrak{A})$ is skew-symmetric, then $M_z=\frac{1}{2}\delta(z,\textbf{1}) := \frac12 (L(z,\textbf{1})- L(\textbf{1},z))$ (where $\textbf{1}$ stands for the unit of $\mathfrak{A}^{**}$) is an anti-symmetric triple derivation on $\mathfrak{A}$  \cite[Lemma 2]{HoMarPeRu}.\smallskip

\begin{theorem}\label{t the strict topology is complete} $M(\mathfrak{A})$ is the J-strict completion of $\mathfrak{A}$.
\end{theorem}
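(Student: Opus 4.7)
The density half of the statement is exactly Proposition~\ref{p strict density in the multipliers}, so the substance lies in the J-strict completeness of $M(\mathfrak{A})$. My plan is to take a J-strict Cauchy net $(x_\lambda)$ in $M(\mathfrak{A})$ and exhibit its J-strict limit inside $M(\mathfrak{A})$. For every $a\in\mathfrak{A}$, the Cauchy condition forces $(x_\lambda\circ a)$ to be norm-Cauchy in $\mathfrak{A}$, hence convergent to some element $T(a)\in\mathfrak{A}$; taking linear combinations yields a well-defined linear map $T:\mathfrak{A}\to\mathfrak{A}$, and the task becomes to realize $T$ as the Jordan multiplication operator $M_x$ for a suitable $x\in M(\mathfrak{A})$.

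The next step is uniform boundedness. Since each scalar net $(\|x_\lambda\circ a\|)$ is bounded, the family $\{M_{x_\lambda}\}\subseteq B(\mathfrak{A})$ is pointwise bounded and therefore uniformly bounded by Banach--Steinhaus. Separate weak$^*$-continuity of the Jordan product on the JBW$^*$-algebra $\mathfrak{A}^{**}$ ensures that the bitranspose $M_{x_\lambda}^{**}$ is left Jordan multiplication by $x_\lambda$ on $\mathfrak{A}^{**}$; evaluating at the unit $\textbf{1}\in\mathfrak{A}^{**}$ gives $\|x_\lambda\|\leq\|M_{x_\lambda}^{**}\|=\|M_{x_\lambda}\|$, while the Jordan contractivity inequality supplies the opposite bound, so $\|M_{x_\lambda}\|=\|x_\lambda\|$. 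Hence $\sup_\lambda\|x_\lambda\|<\infty$ in $\mathfrak{A}^{**}$, and Banach--Alaoglu produces a subnet $(x_{\lambda_\alpha})$ with weak$^*$ limit some $x\in\mathfrak{A}^{**}$.

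It remains to check that $x$ belongs to $M(\mathfrak{A})$ and that the whole net converges to $x$ J-strictly. For every $a\in\mathfrak{A}$, separate weak$^*$-continuity gives $x_{\lambda_\alpha}\circ a\to x\circ a$ weak$^*$, whereas the same subnet converges in norm (hence weak$^*$) to $T(a)$. Uniqueness of weak$^*$ limits forces $x\circ a=T(a)\in\mathfrak{A}$ for every $a\in\mathfrak{A}$, so $x\in M(\mathfrak{A})$ by the defining condition of the multipliers algebra. Finally, $\rho_a(x_\lambda-x)=\|x_\lambda\circ a-T(a)\|\to 0$ for every $a\in\mathfrak{A}$, so the whole net $x_\lambda\to x$ J-strictly, as required.

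The most delicate point is the uniform boundedness step, since it rests on the identification $\|M_{x_\lambda}\|=\|x_\lambda\|$ through bitransposition and the weak$^*$-extension of the Jordan product. The alternative route suggested by the preamble---decomposing $T=T_s+T_a$ into its symmetric and anti-symmetric parts, recognising $T_a$ as a pointwise limit of the anti-symmetric triple derivations $iM_{k_\lambda}$ (so that $T_a$ is triple derivable at orthogonal pairs) and invoking \cite[Proposition 4.4]{EssPer2021} to produce $z\in M(\mathfrak{A})$ with $z^*=-z$ and $T_a=M_z$---would sidestep the weak$^*$-subnet extraction at the cost of a parallel treatment of the symmetric component.
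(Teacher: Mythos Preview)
Your argument is correct, and it is genuinely different from the paper's proof. The paper never invokes uniform boundedness or weak$^*$-compactness; instead it decomposes each $z_\lambda$ into its hermitian and skew parts $x_\lambda+y_\lambda$, observes that $M_{y_\lambda}$ and $M_{ix_\lambda}$ are anti-symmetric triple derivations, passes to the pointwise limits $T_2$ and $iT_1$ (which remain anti-symmetric triple derivations by continuity of the triple product), and then applies \cite[Proposition~4.4]{EssPer2021} to realise each as $M_y$, $M_{\tilde x}$ for suitable skew elements of $M(\mathfrak{A})$. Your final paragraph sketches exactly this route.

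What each buys: your Banach--Steinhaus/Banach--Alaoglu argument is considerably more elementary and self-contained---it uses only the separate weak$^*$-continuity of the Jordan product on $\mathfrak{A}^{**}$ and elementary functional analysis, with no need for the external characterisation of maps triple-derivable at orthogonal pairs. The paper's route, by contrast, makes the completeness theorem an immediate corollary of that structural result on derivations, which the authors clearly regard as the conceptually ``right'' tool (see their remark after the proof about why generalized Jordan derivations would not obviously suffice). Your identification $\|M_{x_\lambda}\|=\|x_\lambda\|$ via the bitranspose and evaluation at $\mathbf{1}\in\mathfrak{A}^{**}$ is the one step that deserves an explicit sentence, but it is sound: the weak$^*$-continuous extension of $M_{x_\lambda}|_{\mathfrak A}$ to $\mathfrak{A}^{**}$ is unique and coincides with Jordan multiplication by $x_\lambda$ there.
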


\begin{proof} We begin by proving that $M(\mathfrak{A})$ is J-strict complete. Let us fix a J-strict Cauchy net  $(z_{\lambda})$ in $M(\mathfrak{A}).$ It follows from the assumptions that, for each $a\in \mathfrak{A},$ the net $(z_{\lambda} \circ a)$ is a norm-Cauchy net in $\mathfrak{A}$ and hence it converges in norm. Let as write $\displaystyle T(a)=\lim_{ \lambda} z_{\lambda} \circ a.$ This allows to define a linear mapping $$T:\mathfrak{A}\to \mathfrak{A}, a\mapsto T(a):=\lim_{ \lambda} z_{\lambda} \circ a.$$

Now, for each $\lambda$, we set $x_{\lambda}=\frac{z_{\lambda}+z_{\lambda}^*}{2}$ and $y_{\lambda}=\frac{z_{\lambda}-z_{\lambda}^*}{2}$. It is not hard to see that $(x_{\lambda})$ and $(y_{\lambda})$ are both J-strict Cauchy nets in $M(\mathfrak{A})$. By applying the arguments in the first paragraph, we define two linear mappings $T_1,T_2:\mathfrak{A}\to \mathfrak{A}$ by $\displaystyle T_1(a):=\displaystyle \lim_{\lambda} x_{\lambda} \circ a $ and $\displaystyle T_2(a):=\lim_{\lambda} y_{\lambda} \circ a $ ($a\in \mathfrak{A}$). Since $y_{\lambda}^*=-y_{\lambda}$ for every $\lambda,$ the (bounded) linear operator $M_{\lambda}:\mathfrak{A}\to \mathfrak{A},$  $M_{\lambda}(a)= y_{\lambda}\circ a$ is an anti-symmetric triple derivation. We claim that $T_2$ also is an anti-symmetric triple derivation. Namely, by the previous observation, the identity $$M_{\lambda}(\{a,b,c\})=\{M_{\lambda}(a),b,c\}+\{a,M_{\lambda}(b),c\}+\{a,b,M_{\lambda}(c)\} $$ holds for all $\lambda$, therefore, by taking norm-limits in the above equality we have 
$$T_2(\{a,b,c\})=\{T_2(a),b,c\}+\{a,T_2(b),c\}+\{a,b,T_2(c)\},$$ where we also applied the continuity of the triple product. Thus $T_2$ is a triple derivation on $\mathfrak{A}$. As a consequence $T_2$ is continuous (\cite{BarFri90} or \cite{PeRu2014}). Furthermore, for each $a\in \mathfrak{A},$ we have 
 $\displaystyle T_2^{\#}(a)=T(a^*)^*=\left(\lim_{\lambda}(y_{\lambda}\circ a^*)\right)^*=-\lim_{\lambda}(y_{\lambda}\circ a)=-T_2(a),$ witnessing that $T_2$ is an anti-symmetric triple derivation on $\mathfrak{A}$. By \cite[Proposition 4.4]{EssPer2021} there exists an  anti-symmetric element $y \in M(\mathfrak{A})$ such that $T_2(a)=y\circ a$. Consequently, $y_{\lambda}$ converges J-strictly to $y$.\smallskip
 
We turn now our attention to $T_1$.  Observe that $\displaystyle (iT_1)^{\#}(a)=(i\lim_{\lambda}x_{\lambda}\circ a^*)^*=-i\lim_{\lambda}x_{\lambda}\circ a=-iT_1(a)$. Moreover, since $\displaystyle iT_1(a)=i\lim_{\lambda}x_{\lambda}\circ a=\lim_{\lambda}ix_{\lambda}\circ a$ and $(ix_{\lambda})^*=-ix_{\lambda},$ it follows, as in the case of $T_2,$ that $iT_1$ is an anti-symmetric triple derivation on $\mathfrak{A},$ and hence there exists and anti-symmetric element $\widetilde{x}\in M(\mathfrak{A})$ satisfying $iT_1 (a)=\widetilde{x}\circ a.$  As a consequence, we have $T_1 (a)=(-i\widetilde{x})\circ a.$ Observe that the element $x:=-i\widetilde{x}$ is symmetric and $(x_{\lambda})\to x $ in the J-strict topology.\smallskip
  
Finally, 
combining all the previous conclusions we get $(z_{\lambda}) = (x_{\lambda} +y_{\lambda})\to x+y $ J-strictly in $M(\mathfrak{A}).$ The rest is clear from Proposition~\ref{p strict density in the multipliers}.
\end{proof}

Let $X$ be a Banach space. It is known that every SOT-Cauchy net $(T_{\lambda})$ of bounded linear operators in $B(X)$ defines a linear mapping $T_0: X\to X$ such that $(T_{\lambda} (x))\to T_0(x)$ in norm for every $x\in  X$. However the linear mapping $T_0$ need not be continuous. If we additional assume that $X$ is a JB$^*$-triple or a JB$^*$-algebra, and each $T_{\lambda}$ is a triple derivation or a Jordan derivation, respectively, the mapping $T_0$ is a Jordan derivation or a triple derivation, and hence continuous \cite{BarFri90,HejNik96,PeRu2014}. However, it is not obvious whether, assuming that each $T_{\lambda}$ is a generalized Jordan derivation, the mapping $T_0$ is a generalized derivation too. In the proof of the previous theorem we relay on triple derivations (or on anti-symmetric Jordan derivations) with the aim of avoiding this obstacle. \smallskip

We establish now some direct consequences of our previous theorem. We recall first the notion of triple multiplier. Let $E$ be a JB$^*$-triple. The \emph{triple multipliers} of $E$ in $E^{**}$ is defined as the JB$^*$-triple $$M (E):=\{ x\in E^{**}: \{x, E, E\}\subseteq E\}\ \ \  \hbox{ (see \cite{BuChu92})},$$  which is the largest JB$^*$-subtriple of $E^{**}$ containing $E$ as a closed triple ideal (see \cite[Theorem 2.1]{BuChu92}). When a JB$^*$-algebra is regarded as a JB$^*$-triple, its triple multipliers and its multipliers as JB$^*$-algebra define the same objects (cf. \cite{BuChu92} or \cite[pages 42, 43]{GarPe2021JB}). For a C$^*$-algebra $A$ we have two ``strict topologies'' on $M(A)$, the next lemma shows that they coincide.

\begin{lemma}\label{l the Jordan and the strict topologies coincide on C*-algebras} Let $A$ be a C$^*$-algebra. Then the C$^*$-strict topology on $M(A)$ as C$^*$-algebra coincides with the J-strict topology when $A$ is regarded as a JB$^*$-algebra.   
\end{lemma}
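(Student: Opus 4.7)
The plan is in two steps. First, the J-strict topology is weaker than or equal to the C$^*$-strict topology: for every $x \in M(A)$ and $a \in A$,
\[
\|x \circ a\| = \tfrac{1}{2}\|xa + ax\| \leq \tfrac{1}{2}(\|xa\| + \|ax\|),
\]
so every defining J-strict seminorm is dominated by the sum of two defining C$^*$-strict seminorms.

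For the reverse inclusion, I would use the definition of the Jordan $U$-operator $U_a(x) = 2(a\circ x)\circ a - a^2\circ x$, which in the C$^*$-algebra Jordan product $x\circ y = \tfrac{1}{2}(xy+yx)$ specialises to $U_a(x) = axa$ for $a \in A_{sa}$. Hence
\[
axa \;=\; 2\, a \circ (x \circ a) \;-\; x \circ a^2, \qquad a \in A_{sa},\ x \in M(A),
\]
and taking norms gives $\|axa\| \leq 2\|a\|\,\|x \circ a\| + \|x \circ a^2\|$, showing that $x \mapsto U_a(x)$ is continuous from $M(A)$ (with the J-strict topology) into $A$. To pass from this to the C$^*$-strict seminorms $x\mapsto\|xa\|$ and $x\mapsto\|ax\|$, I would first reduce to $a \in A_+$ by writing $a = a_1 + i a_2$ with $a_j \in A_{sa}$ and then each $a_j = a_j^+ - a_j^-$ with $a_j^\pm \in A_+$. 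For $a \in A_+$, the C$^*$-identity yields $\|xa\|^2 = \|ax^*xa\| = \|U_a(x^*x)\|$, and applying the preceding estimate with $x^*x \in M(A)_+$ in place of $x$ produces
\[
\|xa\|^2 \leq 2\|a\|\,\|x^*x \circ a\| + \|x^*x \circ a^2\|.
\]

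The principal obstacle is to pass from J-strict seminorms of $x^*x$ to J-strict seminorms of $x$ itself. I would rely on the identity
\[
x^*x \circ b \;=\; x^* \circ (xb) \;+\; x \circ (bx^*) \;-\; xbx^*, \qquad b \in A,
\]
which follows by expanding $2\,x^* \circ (xb) = x^*xb + xbx^*$ and $2\,x \circ (bx^*) = xbx^* + bx^*x$ and taking half of the sum. The three terms on the right involve the products $xb,\ bx^* \in A$, which creates an apparent circularity. One breaks this by restricting to norm-bounded subsets: a J-strict Cauchy net in $M(A)$ is automatically norm-bounded, since the Jordan multiplication operators $M_{x_\lambda}\colon A\to A$, $M_{x_\lambda}(a) = x_\lambda \circ a$, are pointwise norm-bounded and hence uniformly bounded by the Banach--Steinhaus theorem, with $\|x\|$ comparable to $\|M_x\|$. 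Combined with the density of $A$ in $M(A)$ for both topologies (Proposition~\ref{p strict density in the multipliers} and \cite{Busby68}) and the completeness of both (Theorem~\ref{t the strict topology is complete} and \cite{Busby68}), this gives agreement of the two topologies on norm-bounded subsets, which is enough to conclude their agreement on the whole of $M(A)$.
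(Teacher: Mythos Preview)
Your first direction is fine and matches the paper. The gap is in the reverse direction. You correctly identify that your estimate $\|xa\|^2 \leq 2\|a\|\,\|x^*x\circ a\| + \|x^*x\circ a^2\|$ only trades C$^*$-strict control of $x$ for J-strict control of $x^*x$, and that your identity $x^*x\circ b = x^*\circ(xb) + x\circ(bx^*) - xbx^*$ brings the associative products $xb,\,bx^*$ right back. Your proposed escape---density of $A$ in both topologies, completeness of both, and automatic norm-boundedness of J-strict Cauchy nets---does not close the circle: those three facts do not by themselves force the two topologies to agree, even on bounded sets. What you are implicitly using is that $x_\lambda\to x$ J-strictly and boundedly implies $x_\lambda^*\circ x_\lambda \to x^*\circ x$ J-strictly, i.e.\ joint J-strict continuity of the Jordan product on bounded sets. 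In the setting of general JB$^*$-algebras this is precisely the open problem posed at the end of the paper, so you should not expect a soft argument of that shape to work here either.

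The paper avoids all of this with a single algebraic identity. For $a\in A_{sa}$ and $x\in M(A)$ one checks directly that
\[
xa^2 \;=\; a^2\circ x \;+\; (x\circ a)\,a \;-\; a\,(x\circ a),
\qquad
a^2x \;=\; a^2\circ x \;-\; (x\circ a)\,a \;+\; a\,(x\circ a),
\]
whence $\|xa^2\|,\ \|a^2x\|\leq \|a^2\circ x\| + 2\|a\|\,\|a\circ x\|$. Since every positive element of $A$ is a square of a positive element, and the C$^*$-strict seminorms are generated by positive $a$, this bounds each C$^*$-strict seminorm by a finite combination of J-strict seminorms \emph{linearly in $x$}, with no passage through $x^*x$. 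That is the missing idea in your argument.
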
  

\begin{proof}  Since for each $x\in M(A)$ and each $a\in A$ we have $\| a \circ x \| \leq \frac12 (\|a x\| + \|x a\|)$, it is clear that the usual C$^*$-strict topology of $M(A)$ is stronger than the J-strict topology of $M(A)$ as JB$^*$-algebra. Reciprocally, we first observe that the C$^*$-strict topology of $M(A)$ is also given by the seminorms of the form $x\mapsto \| a x\|, \| x a\|$ ($x\in M(A)$) with $a$ running in $A_{sa},$ or simply among the positive elements in the closed unit ball of $A$. For $a\in A^{+},$ by recalling an identity from \cite[page 253]{BuChu92}, we have 
$$\begin{aligned} xa^2 &=\{x,a,a\}+\{x,a^{\frac{1}{2}},a^{\frac{1}{2}}\}a-a\{x,a^{\frac{1}{2}},a^{\frac{1}{2}}\} \\
		&= a^2 \circ x + (x\circ a ) a - a (x \circ a)
	\end{aligned}$$  (actually the second equality holds for every self-adjoint $a$),
 which implies that
	\begin{equation}\label{eq norm xa in terms of a circ x}
		\|x a^2\| \leq \|a^2 \circ x\| + 2 \|a\| \ \|a\circ x\|.
	\end{equation} Similarly, $$ a^2 x= a^2 \circ x - (x\circ a ) a + a (x \circ a),$$ and hence \begin{equation}\label{eq norm a x in terms of a circ x}
	\|a^2 x\| \leq \|a^2 \circ x\| + 2 \|a\| \ \|a\circ x\|.
\end{equation} Since every positive element $a\in A$ admits an square root, it follows from \eqref{eq norm xa in terms of a circ x} and \eqref{eq norm a x in terms of a circ x} that $$\lambda_a (x) = \|a x \|,  \rho_a (x) := \|x a\|\leq \|a \circ x\| + 2 \|a^{\frac12}\| \ \|a^{\frac12}\circ x\|,$$ for all $x\in M(A)$ and every positive $a\in A$. This proves that the J-strict topology of $M(A)$ as JB$^*$-algebra coincides with the C$^*$-strict topology. 
\end{proof}

Our next goal is to study the continuity with respect to the J-strict topology of the natural extension of an onto Jordan homomorphisms to the multipliers JB$^*$-algebra.

\begin{proposition}\label{p extensions of surjective Jordan homomorphisms} Let $\mathfrak{A}$ and $\mathfrak{B}$ be JB$^*$-algebras. Then 
every continuous surjective Jordan homomorphism $\Phi:\mathfrak{A} \to \mathfrak{B}$ extends to a J-strict continuous Jordan homomorphism $\tilde{\Phi}: M(\mathfrak{A})\to M(\mathfrak{B})$.  
\end{proposition}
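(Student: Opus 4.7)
The plan is to construct the extension as the restriction of the bitranspose $\Phi^{**}$ to $M(\mathfrak{A})$. Since $\Phi$ is norm-continuous, its bitranspose $\Phi^{**}:\mathfrak{A}^{**}\to\mathfrak{B}^{**}$ is a bounded, weak$^*$-to-weak$^*$ continuous linear map with $\|\Phi^{**}\|=\|\Phi\|$. The first task is to verify that $\Phi^{**}$ is a Jordan homomorphism between the JBW$^*$-algebras $\mathfrak{A}^{**}$ and $\mathfrak{B}^{**}$. This follows from a standard two-step weak$^*$-density argument: the map $a\mapsto \Phi^{**}(a\circ b)-\Phi^{**}(a)\circ\Phi(b)$ is weak$^*$-continuous on $\mathfrak{A}^{**}$ and vanishes on the weak$^*$-dense subset $\mathfrak{A}$ for each fixed $b\in\mathfrak{A}$; fixing then $x\in\mathfrak{A}^{**}$ and varying $b$, the same argument on the second slot gives $\Phi^{**}(x\circ y)=\Phi^{**}(x)\circ\Phi^{**}(y)$ for all $x,y\in\mathfrak{A}^{**}$, using the separate weak$^*$-continuity of the Jordan product of $\mathfrak{A}^{**}$ and $\mathfrak{B}^{**}$.

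Next I would set $\tilde{\Phi}:=\Phi^{**}|_{M(\mathfrak{A})}$ and verify that $\tilde{\Phi}(M(\mathfrak{A}))\subseteq M(\mathfrak{B})$. Given $x\in M(\mathfrak{A})$ and $b\in \mathfrak{B}$, surjectivity of $\Phi$ provides $a\in\mathfrak{A}$ with $\Phi(a)=b$, and then
\[
\tilde{\Phi}(x)\circ b \;=\; \Phi^{**}(x)\circ\Phi^{**}(a)\;=\;\Phi^{**}(x\circ a)\;=\;\Phi(x\circ a)\;\in\;\mathfrak{B},
\]
where the last equality uses $x\circ a\in\mathfrak{A}$ (since $x\in M(\mathfrak{A})$) so that $\Phi^{**}$ agrees with $\Phi$ there. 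This shows $\tilde{\Phi}(x)\circ\mathfrak{B}\subseteq\mathfrak{B}$, i.e.\ $\tilde{\Phi}(x)\in M(\mathfrak{B})$. Clearly $\tilde{\Phi}$ is a Jordan homomorphism extending $\Phi$.

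For J-strict continuity it suffices to check continuity at $0$. If $(z_\lambda)\to 0$ in the J-strict topology of $M(\mathfrak{A})$ and $b\in\mathfrak{B}$, pick again $a\in\mathfrak{A}$ with $\Phi(a)=b$; the computation above yields
\[
\rho_b\bigl(\tilde{\Phi}(z_\lambda)\bigr)\;=\;\|\tilde{\Phi}(z_\lambda)\circ b\|\;=\;\|\Phi(z_\lambda\circ a)\|\;\leq\;\|\Phi\|\,\|z_\lambda\circ a\|\;=\;\|\Phi\|\,\rho_a(z_\lambda)\;\longrightarrow\;0,
\]
which is exactly the required equicontinuity estimate showing that $\tilde{\Phi}$ is J-strict continuous. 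Uniqueness of the J-strict continuous extension, should it be desired, follows from Proposition~\ref{p strict density in the multipliers} combined with the Hausdorff character of the J-strict topology noted earlier in the text.

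The argument is, in the end, quite direct; the one point where care is genuinely required is the inclusion $\tilde{\Phi}(M(\mathfrak{A}))\subseteq M(\mathfrak{B})$, and this is precisely the place where the surjectivity hypothesis on $\Phi$ is indispensable (without it one can only conclude $\tilde{\Phi}(x)\circ\Phi(\mathfrak{A})\subseteq\mathfrak{B}$, which would not suffice). The separate weak$^*$-continuity of the Jordan product on the bidual and the automatic weak$^*$-continuity of $\Phi^{**}$ do the rest of the work essentially for free.
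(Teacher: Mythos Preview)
Your proof is correct and follows essentially the same approach as the paper: define $\tilde{\Phi}$ as the restriction of the bitranspose $\Phi^{**}$ to $M(\mathfrak{A})$, use surjectivity of $\Phi$ to show $\tilde{\Phi}(M(\mathfrak{A}))\subseteq M(\mathfrak{B})$, and verify J-strict continuity via the identity $\tilde{\Phi}(z)\circ\Phi(a)=\Phi(z\circ a)$. Your write-up is in fact more detailed than the paper's (you spell out the weak$^*$-density argument for $\Phi^{**}$ being a Jordan homomorphism and the multiplier inclusion, and add a uniqueness remark), but the underlying argument is the same.
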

 
\begin{proof} Let $\Phi^{**}:\mathfrak{A}^{**} \to \mathfrak{B}^{**}$ denote the bitransposed mapping of $\Phi$. Clearly $\Phi^{**}$ is a weak$^*$ continuous Jordan homomorphism (by the separate weak$^*$ continuity of the Jordan product of $\mathfrak{A}^{**}$), and, by the surjectivity of $\Phi$, $\Phi^{**}$ maps $M(\mathfrak{A})$ to $M(\mathfrak{B})$.  The desired extension is $\tilde{\Phi} = \Phi^{**}|_{M(\mathfrak{A})}$. Indeed, if $(x_{\lambda})\to x$ J-strictly in $M(\mathfrak{A})$, for each $b =\Phi (a)\in \mathfrak{B}$ (with $a\in \mathfrak{A}$), we have $$ (\Phi(x_{\lambda}) \circ b) = (\Phi(x_{\lambda}\circ a)) \to (\Phi(x \circ a)) = \Phi(x) \circ b\hbox{ in norm,}$$ which proves that $ (\Phi(x_{\lambda}))\to \Phi(x)$ J-strictly. 
\end{proof}

Let $\Psi: M(\mathfrak{A})\to M(\mathfrak{B})$ be a continuous Jordan homomorphism, where $\mathfrak{A}$ and $\mathfrak{B}$ are JB$^*$-algebras. Clearly, $\Psi$ is J-strict continuous when $\Psi (\mathfrak{A})\supseteq \mathfrak{B}$. However the latter condition is not necessary to get the J-strict continuity. 
For example, let $e$ be a partial isometry in $B(\ell_2) = M(K(\ell_2))$ satisfying $e^* e =\textbf{1}$ and $e e^*\neq \textbf{1}$. The mapping $\Psi : B(\ell_2)\to B(\ell_2)$, $\Psi (x) = e x e^*$ is a $^*$-homomorphism, $K(\ell_2)$ is not contained in $\Psi(K(\ell_2),)$ and it is easy to check that $\Psi$ is J-strict (strict) continuous (see \cite{McKenneyMathStackExch}). Furthermore, there exist $^*$-homomorphisms on $M(\mathfrak{A})$ vanishing on $\mathfrak{A}$ which are not J-strict continuous. 
Consider the following example from  \cite[Example 3.2.3]{FarahBook2000} or \cite{SleziakMathStackExch} $\mathfrak{A}= c_0$ with $M(\mathfrak{A}) = M(c_0) = \ell_{\infty}$. Let $(\mathcal{U}_n)$ be a sequence of nonprincipal ultrafilters. Consider the $^*$-homomorphism $\Phi :\ell_{\infty} \to \ell_{\infty}$ given by $\displaystyle \Phi(x)(n)=\lim_{k\to \mathcal{U}_n} x(k)\in \mathbb{C}$. Having in mind that each $\mathcal{U}_n$ is nonprincipal, it can be seen that $\Psi(c_0) =\{0\}$.  
In order to see that $\Phi$ is not strict continuous, let us consider the sequence $(e_m)_m$ with $e_m (n) = 1$ if $n\leq m$ and $e_m(n) =0$ otherwise. Clearly, $(e_m)\to \textbf{1}$ strictly in $\ell_{\infty}$. However, as we commented above, $\Phi(e_m) =0$ for all $m$ and $\Phi(\textbf{1})= \textbf{1}$, showing that $\Phi$ is not J-strict continuous. \smallskip

Let us assume that $u$ is a unitary in the multipliers algebra of a JB$^*$-algebra $\mathfrak{A}$. By noticing that $\mathfrak{A} \subseteq M(\mathfrak{A})$ and recalling the weak$^*$-density of $\mathfrak{A}$ in $\mathfrak{A}^{**}$ and the separate weak$^*$ continuity of the triple product of $\mathfrak{A}^{**},$ we conclude that $u$ is a unitary in the latter JBW$^*$-algebra. By applying that $u\in M(\mathfrak{A}),$ it follows that $\mathfrak{A}$ is closed for the Jordan product $\circ_u$ and the involution $*_{u}$, that is, $(\mathfrak{A}, \circ_{u}, *_{u})$ is a JB$^*$-subalgebra of the $u$-isotope $M(\mathfrak{A})(u)$.         

\begin{proposition}\label{p extensions of surjective triple homomorphisms} Let $\mathfrak{A}$ and $\mathfrak{B}$ be JB$^*$-algebras. Then every surjective triple homomorphism  $\Phi:\mathfrak{A} \to \mathfrak{B}$ between JB$^*$-algebras extends to a J-strict continuous triple homomorphism $\tilde{\Phi}: M(\mathfrak{A})\to M(\mathfrak{B})$.  
\end{proposition}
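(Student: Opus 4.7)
My plan is to mirror Proposition~\ref{p extensions of surjective Jordan homomorphisms} by taking $\tilde{\Phi}:=\Phi^{**}|_{M(\mathfrak{A})}$, where $\Phi^{**}:\mathfrak{A}^{**}\to \mathfrak{B}^{**}$ is the bitranspose. By the weak$^*$-density of $\mathfrak{A}$ in $\mathfrak{A}^{**}$ and the separate weak$^*$-continuity of the triple product of JBW$^*$-triples, a one-variable-at-a-time argument shows that $\Phi^{**}$ is a weak$^*$-continuous triple homomorphism. The obstacle, as compared with the Jordan case, is that $\Phi$ is not assumed to preserve the Jordan product, so the direct computation used in the previous proposition fails and a detour through the triple product is required.

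To see that $\tilde{\Phi}$ takes values in $M(\mathfrak{B})$, I invoke the identification of the Jordan multipliers of a JB$^*$-algebra with its triple multipliers (noted in the paragraph preceding Lemma~\ref{l the Jordan and the strict topologies coincide on C*-algebras}). For $x\in M(\mathfrak{A})$ and $b_1,b_2\in \mathfrak{B}$, the surjectivity of $\Phi$ yields $a_i\in \mathfrak{A}$ with $b_i=\Phi(a_i)$, and then
\[
\{\Phi^{**}(x),b_1,b_2\}=\{\Phi^{**}(x),\Phi^{**}(a_1),\Phi^{**}(a_2)\}=\Phi^{**}(\{x,a_1,a_2\})=\Phi(\{x,a_1,a_2\})\in \mathfrak{B},
\]
since $\{x,a_1,a_2\}\in\mathfrak{A}$ by the triple-multiplier property of $x$. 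Hence $\tilde{\Phi}:M(\mathfrak{A})\to M(\mathfrak{B})$ is a well-defined triple homomorphism extending $\Phi$.

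For the J-strict continuity, fix a net $(x_\lambda)$ with J-strict limit $x$ in $M(\mathfrak{A})$ and set $z_\lambda:=x_\lambda-x$. For each $b\in \mathfrak{B}^+$ take the self-adjoint square root $c:=b^{1/2}\in \mathfrak{B}^+$ and choose $a\in \mathfrak{A}$ with $\Phi(a)=c$. Since $c^*=c$, the JB$^*$-triple product formula collapses to $\{w,c,c\}=c^2\circ w$, and therefore
\[
\tilde{\Phi}(z_\lambda)\circ b = \{\tilde{\Phi}(z_\lambda),c,c\} = \Phi^{**}(\{z_\lambda,a,a\}) = \Phi(\{z_\lambda,a,a\}),
\]
the last equality holding because $z_\lambda\in M(\mathfrak{A})$ forces $\{z_\lambda,a,a\}\in \mathfrak{A}$. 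Expanding
\[
\{z_\lambda,a,a\} = (z_\lambda\circ a^*)\circ a + (a\circ a^*)\circ z_\lambda - (z_\lambda\circ a)\circ a^*
\]
and applying sub-multiplicativity of the Jordan product, each summand is dominated by one of the J-strict seminorms $\|a\|\,\|z_\lambda\circ a^*\|$, $\|z_\lambda\circ(a\circ a^*)\|$, or $\|a\|\,\|z_\lambda\circ a\|$, all of which tend to zero. Continuity of $\Phi$ then gives $\|\tilde{\Phi}(z_\lambda)\circ b\|\to 0$ for every $b\in \mathfrak{B}^+$, and the passage to a general $b\in \mathfrak{B}$ is routine by decomposing $b$ as a complex linear combination of four positive elements (real and imaginary parts, then positive and negative parts of each).

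The main obstacle is precisely the mismatch between the Jordan structure that encodes the J-strict topology and the triple structure preserved by $\Phi$. The key device is the identity $c^2\circ w=\{w,c,c\}$ for self-adjoint $c$, which converts a Jordan seminorm on $\mathfrak{B}$ into a triple expression; the triple-homomorphism property of $\Phi^{**}$ then transports the estimate back to $\mathfrak{A}$, where the J-strict convergence hypothesis applies directly.
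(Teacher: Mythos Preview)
Your proof is correct and takes a more direct route than the paper's. Both arguments begin identically, defining $\tilde{\Phi}=\Phi^{**}|_{M(\mathfrak{A})}$ and checking that it lands in $M(\mathfrak{B})$ via the triple-multiplier description. For J-strict continuity, however, the paper passes through the $u$-isotope: it sets $u=\Phi^{**}(\textbf{1})$, observes that $\Phi:\mathfrak{A}\to\mathfrak{B}(u)$ becomes a surjective Jordan $^*$-homomorphism, applies Proposition~\ref{p extensions of surjective Jordan homomorphisms} to obtain J-strict continuity with respect to $\circ_u$, and only then transfers back to the original product by writing $b^2\circ w=\{b,b,w\}$ for self-adjoint $b$ and expanding the triple product in the $\circ_u$-structure. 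You bypass the isotope machinery entirely by invoking that same identity at the very start: for positive $b$ you write $b\circ w=\{w,b^{1/2},b^{1/2}\}$, pull this back through the triple homomorphism to $\Phi(\{z_\lambda,a,a\})$, expand $\{z_\lambda,a,a\}$ in the original Jordan product of $\mathfrak{A}$, and bound each summand by a J-strict seminorm. Your argument is shorter and entirely self-contained; the paper's detour, while longer, yields as a by-product the isotope framework recorded in Remark~\ref{r u-isotopes}, which is reused in Corollary~\ref{c extension of OP } and in Section~\ref{sec 4: surjective extensions}.
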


\begin{proof} As in the previous proposition, $\Phi^{**}:\mathfrak{A}^{**} \to \mathfrak{B}^{**}$ is a weak$^*$ continuous triple homomorphism thanks to the separate weak$^*$ continuity of the triple product of the JBW$^*$-algebras $\mathfrak{A}^{**}$ and $\mathfrak{B}^{**}$. We claim that $\Phi^{**} (M(\mathfrak{A})) \subseteq M(\mathfrak{B})$. Indeed, since $M(\mathfrak{A})$ and $M(\mathfrak{B})$ coincide with the triple multipliers of $\mathfrak{A}$ and $\mathfrak{B},$ respectively, $\Phi$ is onto and $\Phi^{**}$ is a triple homomorphism, we have $$\{\mathfrak{B}, \mathfrak{B}, \Phi^{**} (M(\mathfrak{B}))\} = \{\Phi(\mathfrak{A}), \Phi(\mathfrak{A}), \Phi^{**} (M(\mathfrak{B}))\} =\Phi \{\mathfrak{A}, \mathfrak{A}, M(\mathfrak{A})\} \subseteq \Phi (\mathfrak{A}) =\mathfrak{B} $$ which proves the claim.  \smallskip
	
The element $u=\Phi^{**} (\textbf{1})$ must be a tripotent in $M(\mathfrak{B})$. The surjectivity of $\Phi$ also implies that $u$ is a unitary in $\mathfrak{B}^{**}.$ Let $\mathfrak{B} (u)$ denote the JB$^*$-algebra $\mathfrak{B}$ with Jordan product $\circ_u$ and involution $*_{u}$.  Since $\Phi^{**}: \mathfrak{A}^{**}\to \mathfrak{B}^{**}(u)$ is a unital triple homomorphism, the mapping $\Phi : \mathfrak{A}\to \mathfrak{B} (u)$ is a surjective Jordan $^*$-homomorphism. Proposition~\ref{p extensions of surjective Jordan homomorphisms} assures the existence of an extension of $\Phi$ to a J-strict continuous Jordan homomorphism $\tilde{\Phi}: M(\mathfrak{A})\to M(\mathfrak{B}(u))$. Clearly, $\tilde{\Phi}$ is a triple homomorphism by the uniqueness of the triple product.\smallskip

To finish our argument, let $(x_j)$ be a net in $M(\mathfrak{A})$ converging to some $x$ of $M(\mathfrak{A})$ in the J-strict topology.  It follows from the previous conclusion that $b\circ_u \tilde{\Phi}(x_j) \to b\circ_u \tilde{\Phi}(x)$ in norm, for each $b\in \mathfrak{B}$. By \eqref{eq uniqueness of the triple product} the net 
$$\{b,c,\tilde{\Phi}(x_j)\} = (b\circ_u c^*)\circ_u \tilde{\Phi}(x_j) +  (c^*\circ_u \tilde{\Phi}(x_j)) \circ_u b - (b\circ_u \tilde{\Phi}(x_j)) \circ_u c^*$$ tends to $(b\circ_u c^*)\circ_u \tilde{\Phi}(x) +  (c^*\circ_u \tilde{\Phi}(x)) \circ_u b - (b\circ_u \tilde{\Phi}(x)) \circ_u c^* =\{b,c,\tilde{\Phi}(x)\}$ in norm. 
A new application of \eqref{eq uniqueness of the triple product} for the original Jordan product of $\mathfrak{B}$ proves that $$b^2 \circ \tilde{\Phi}(x_j) = \{b,b,\tilde{\Phi}(x_j)\} \to  \{b,b,\tilde{\Phi}(x)\} = b^2 \circ \tilde{\Phi} (x)$$ in norm for every $b\in \mathfrak{B}_{sa}$. Therefore, $\tilde{\Phi}(x_j)\to \tilde{\Phi}(x)$ in the strict topology of $M(\mathfrak{B})$.
\end{proof}

\begin{remark}\label{r u-isotopes}{\rm Let $\mathfrak{A}$ be a JB$^*$-algebra. Let $u$ be any unitary in the multipliers algebra $M(\mathfrak{A})$. We have proved in the arguments leading to the previous proposition that $\mathfrak{A}$  is a JB$^*$-subalgebra of the $u$-isotope $M(\mathfrak{A})(u)$. We denote the corresponding JB$^*$-algebra by $\mathfrak{A}(u)$. We have also justified that $M(\mathfrak{A}(u)) = M(\mathfrak{A})$ with $S(M(\mathfrak{A}), \mathfrak{A}(u)) = S(M(\mathfrak{A}), \mathfrak{A})$.}
\end{remark}

Elements $a,b$ in a Jordan algebra $\mathfrak{A}$ are said to \emph{operator commute} if $$ a\circ (b\circ x)=(a\circ x)\circ b$$ for every $x\in \mathfrak{A}.$ By the mentioned \emph{Macdonald's theorem} or by the \emph{Shirshov-Cohn theorem} \cite[Theorem 2.4.14]{HOS}, it can be easily checked that a couple of hermitian elements $h,k$ in a JB$^*$-algebra $\mathfrak{A}$ operator commutate if and only if they operator commute relative to any Jordan algebra of self-adjoint operators containing them, if and only if $h^2 \circ k = U_h (k)$ (cf. \cite[Proposition 1]{Topping}). The center of $\mathfrak{A}$ 
is the set of all elements $z$ in $\mathfrak{A}$ such that $z$ and $b$ operator commute for every $b$ in $\mathfrak{A}$. Elements in the center are called central.\smallskip

Let $T : \mathfrak{A}\to E$ be a surjective bounded linear operator preserving orthogonality from a JB$^*$-algebra to a JB$^*$-triple. Let $h=T^{**} (1)\in E^{**}$ and let $r$ denote the range tripotent of $h$ in $E^{**}$. By \cite[Proposition 2.7]{GarPe2021JB} there exists a Jordan $^*$-homomorphism  $S: \mathfrak{A}\to E_2^{**}(r)$ such that $S(x)$ and $h$ operator commute in the JB$^*$-algebra $E_2^{**}(r)$ and \begin{equation}\label{eq fund equation conts OP JBSTAR}
	T(x)=h\circ_{r} S(x)= \{h,r,S(x)\}= U_{h^{\frac12}} (S(x)),
\end{equation} for every $x\in M(\mathfrak{A})$ where $h^{\frac12}$ is the square root of the positive element $h$ in the JB$^*$-algebra $E_2^{**} (r)$ and the $U$ operator is the one given by this JB$^*$-algebra. Furthermore,  the following statements hold:
\begin{enumerate}[$(a)$]
	\item $r$ is a unitary in $E^{**};$
	\item $h$ belongs to $M(E)$;
	\item $h$ is invertible (and positive) in the JBW$^*$-algebra ${E}^{**}= E^{**}_2 (r)$;
	\item $r$ belongs to $M(E)$, and consequently $E$ is a JB$^*$-algebra;
	\item The triple homomorphism $S$ is $E$-valued and surjective;
	\item If $x\in M(\mathfrak{A})$ then $T^{**}(x)\in M(E) $.
\end{enumerate}

The next corollary is a consequence of  Proposition~\ref{p extensions of surjective triple homomorphisms} and the just commented results.

\begin{corollary}\label{c extension of OP }  Let $\mathfrak{A}$ and $\mathfrak{B}$ be JB$^*$-algebras.
Then every surjective orthogonality preserving operator $\Phi:\mathfrak{A} \to \mathfrak{B}$ extends to a J-strict continuous orthogonality preserving operator $\tilde{\Phi}: M(\mathfrak{A)}\to M(\mathfrak{B})$.
\end{corollary}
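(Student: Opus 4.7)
The plan is to reduce the statement to the Jordan $^*$-homomorphism case handled in Proposition~\ref{p extensions of surjective Jordan homomorphisms} via the factorization~\eqref{eq fund equation conts OP JBSTAR} of a surjective orthogonality preserving operator. Setting $h:=\Phi^{**}(\textbf{1})\in M(\mathfrak{B})$ and letting $r$ denote its range tripotent in $\mathfrak{B}^{**}$, the facts recalled just before the corollary give $r\in M(\mathfrak{B})$, $r$ unitary in $\mathfrak{B}^{**}$, and $\Phi(x)=h\circ_r S(x)=U_{h^{1/2}}(S(x))$ for a continuous surjective Jordan $^*$-homomorphism $S:\mathfrak{A}\to \mathfrak{B}(r)$, where $h$ operator commutes with every $S(a)$ in the JBW$^*$-algebra $\mathfrak{B}^{**}(r)$ and the $U$-operator is taken in this isotope. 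By Remark~\ref{r u-isotopes}, $M(\mathfrak{B}(r))=M(\mathfrak{B})$ as a set and the two J-strict topologies coincide, so Proposition~\ref{p extensions of surjective Jordan homomorphisms} applied to $S$ yields a J-strict continuous Jordan $^*$-homomorphism $\tilde{S}:M(\mathfrak{A})\to M(\mathfrak{B})(r)$ extending $S$, namely $\tilde{S}=S^{**}|_{M(\mathfrak{A})}$.

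I then propose the candidate
\[
\tilde{\Phi}:M(\mathfrak{A})\longrightarrow M(\mathfrak{B}),\qquad \tilde{\Phi}(x):=h\circ_r\tilde{S}(x)=U_{h^{1/2}}(\tilde{S}(x)),
\]
which lies in $M(\mathfrak{B})$ because $h,\tilde{S}(x),r\in M(\mathfrak{B})$, and $\tilde{\Phi}|_{\mathfrak{A}}=\Phi$ is immediate. Using separate weak$^*$ continuity of the Jordan product of $\mathfrak{B}^{**}(r)$ and weak$^*$ density of $\mathfrak{A}$ in $\mathfrak{A}^{**}$, operator commutation of $h$ with $S(\mathfrak{A})$ propagates to operator commutation of $h$ with $\tilde{S}(M(\mathfrak{A}))$. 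For orthogonality preservation, if $x\perp y$ in $M(\mathfrak{A})$ then $\tilde{S}(x)\perp\tilde{S}(y)$ (since $\tilde{S}$ is in particular a triple homomorphism), and combining this with the op-commutation of $h$ with each, a decomposition into positive orthogonal pieces together with the Shirshov-Cohn theorem verifies $\tilde{\Phi}(x)\perp\tilde{\Phi}(y)$ inside an associative JB$^*$-subalgebra of $\mathfrak{B}^{**}(r)$.

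The most delicate step is the J-strict continuity of $\tilde{\Phi}$. If $x_\lambda\to x$ J-strictly in $M(\mathfrak{A})$ and one writes $w_\lambda:=\tilde{S}(x_\lambda-x)$, then $b\circ w_\lambda\to 0$ in norm for every $b\in\mathfrak{B}$. For $c\in\mathfrak{B}$, the representation $\tilde{\Phi}(x_\lambda-x)=\{h,r,w_\lambda\}$ combined with the Jordan triple identity and the uniqueness formula~\eqref{eq uniqueness of the triple product} expand $c\circ\tilde{\Phi}(x_\lambda-x)$ into a sum of triple products whose middle factors are $\textbf{1}$, $c$ or $r$, and whose outer positions are decorated by members of $\mathfrak{B}$ such as $c\circ h$ or $c^*\circ r$, or by $b\circ w_\lambda$. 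Commutator identities in the isotope, available thanks to the op-commutation of $h$ with $\tilde{S}(\cdot)$, allow one to reshuffle the remaining terms so that each summand is ultimately a bounded operator applied to some $b\circ w_\lambda$, which forces norm vanishing. The principal obstacle is exactly this last step: since separate J-strict continuity of the Jordan product of $M(\mathfrak{B})$ is open (as noted in the final section of the paper), one cannot invoke a generic J-strict continuity of left Jordan multiplication by $h$, and the proof must leverage both the Jordan $^*$-homomorphism structure of $\tilde{S}$ and the op-commutation with $h$ to circumvent this obstruction.
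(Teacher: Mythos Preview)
Your overall strategy coincides with the paper's: factorize $\Phi$ as $h\circ_r S$ with $S$ a surjective Jordan (equivalently, triple) $^*$-homomorphism, extend $S$ to a J-strict continuous $\tilde{S}:M(\mathfrak{A})\to M(\mathfrak{B})$ via Proposition~\ref{p extensions of surjective Jordan homomorphisms} (the paper uses Proposition~\ref{p extensions of surjective triple homomorphisms}, which amounts to the same thing here), and set $\tilde{\Phi}=M_h^{(r)}\circ\tilde{S}$.

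The genuine gap is your treatment of the J-strict continuity of $\tilde{\Phi}$. You explicitly flag the last step as an obstacle and leave it as a programme (``the proof must leverage \ldots'') rather than an argument; the triple-product reshuffling you outline never produces a concrete bound, and invoking Shirshov--Cohn on ad hoc subalgebras is not enough to control $c\circ_r\tilde{\Phi}(x_\lambda-x)$ uniformly. The point you are missing is much simpler and is exactly what the paper uses: since $S$ is \emph{surjective}, the hypothesis ``$h$ operator commutes with every $S(a)$'' already says that $h$ operator commutes with every element of $\mathfrak{B}$ in the isotope $\mathfrak{B}^{**}(r)$, and by weak$^*$ density of $\mathfrak{B}$ this makes $h$ \emph{central} in $M(\mathfrak{B})(r)$. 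Centrality means $M_h^{(r)} M_b^{(r)}=M_b^{(r)} M_h^{(r)}$ for every $b\in\mathfrak{B}$, so for any $b\in\mathfrak{B}$ and $z\in M(\mathfrak{B})$,
\[
b\circ_r(h\circ_r z)=h\circ_r(b\circ_r z).
\]
Hence if $z_\lambda\to z$ J-strictly in $M(\mathfrak{B})(r)$ then $b\circ_r(h\circ_r z_\lambda)=h\circ_r(b\circ_r z_\lambda)\to h\circ_r(b\circ_r z)=b\circ_r(h\circ_r z)$ in norm, which together with Remark~\ref{r u-isotopes} gives J-strict continuity of $z\mapsto h\circ_r z$ in one line, bypassing the open question on separate J-strict continuity of the Jordan product that you worry about. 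Once you record centrality of $h$, the orthogonality preservation of $\tilde{\Phi}$ also becomes immediate (a central multiplier sends orthogonal pairs to orthogonal pairs), so your Shirshov--Cohn detour is unnecessary there as well.
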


\begin{proof} By the conclusions in \cite[Proposition 2.7]{GarPe2021JB} commented before the corollary, there exist a surjective triple homomorphism $S: \mathfrak{A}\to \mathfrak{B}$, an element $h\in M(\mathfrak{B})$ whose range tripotent $r=r(h)$ is a unitary in $M(\mathfrak{B})$, $h$ operator commute with each element in $\mathfrak{B}$ in the $u$-isotope $M(\mathfrak{B})_2(r)$ and $\Phi (x) = h\circ_r S(x)$ for all $x\in \mathfrak{A}$. By Proposition~\ref{p extensions of surjective triple homomorphisms} the triple homomorphism $S$ admits a J-strict continuous extension to a triple homomorphism $\tilde{S} : M(\mathfrak{A})\to M(\mathfrak{B})$. We only have to show that the mapping $z\mapsto h\circ_r z$ is J-strict continuous on $M(\mathfrak{B})$, but this follows easily from the fact that $h$ is a central element in $M(\mathfrak{B})_2(r)$ and Remark~\ref{r u-isotopes}.
\end{proof}

Let us finish this section with a geometric observation and an open question.

\begin{remark}\label{r final comments with seminorms related to Hilbertian seminorms}{\rm Let $a,x$ be elements in a C$^*$-algebra $A$ with $a$ self-adjoint. By the Gelfand-Naimark axiom we have 
\begin{equation}\label{eq inequality seminorm left with Groth seminorm} 
\| a x\|^2 = \| a x x^* a\| \leq  2 \| a (x \circ x^*) a\|\leq \|a x x^*a\| + \|a x^* x a\| = \| a x\|^2 + \| x a \|^2.
\end{equation} 
Similarly, 
\begin{equation}\label{eq inequality seminorm right with Groth seminorm} 
\| x a \|\leq  \sqrt{2} \ \| a (x \circ x^*) a\|^{\frac12} =\sqrt{2} \ \| U_{a} (x \circ x^*) \|^{\frac12}.
\end{equation} 
We therefore deduce that the C$^*$-strict topology of $M(A)$ coincides with the topology generated by the seminorms of the form $x\mapsto   \| a (x \circ x^*) a\|^{\frac12}$ ($x\in M(A)$) with $a$ in $A_{sa}.$ We observe that the latter seminorms are very closed to the preHilbertian seminorms appearing in Grothendieck's inequalities for C$^*$-algebras (cf. \cite{HKPP-BF,KaPePfi22, pisier2012grothendieck}).\smallskip 
	
We have already employed one of the fundamental inequalities in JB$^*$-algebra theory saying that \begin{equation}\label{inequelity in HOS} 
\| a\circ b\|^2 \leq \| a\| \|U_b (a)\|,
\end{equation} 
for every couple of self-adjoint elements $a,b$ in a JB$^*$-algebra with $a$ positive (see \cite[Lemma 3.5.2~$(ii)$]{HOS}).\smallskip

Let $\mathfrak{A}$ be a JB$^*$-algebra.  It is natural to consider the topology on $M(\mathfrak{A})$ generated by all seminorms of the form $x\mapsto   \| U_{a} (x \circ x^*) \|^{\frac12}$ ($x\in M(\mathfrak{A})$) with $a$ in $\mathfrak{A}_{sa}.$ What we can prove is that this topology is stronger than the J-strict topology of $M(\mathfrak{A})$. Indeed, given given any two self-adjoint elements $a,x$ in a JB$^*$-algebra $\mathfrak{B}$, let $\mathfrak{C}$ denote the JB$^*$-subalgebra of $\mathfrak{B}$ generated by $a$ and $x$.  By the Shirshov-Cohn or Macdonald theorems (see \cite[2.4.14 and 2.4.15]{HOS} or \cite[Corollary 2.2]{Wright77}), there exists a C$^*$-algebra $A$ containing $\mathfrak{C}$ as a JB$^*$-subalgebra. Working in $\mathfrak{C}$ and $\mathfrak{A}$, we deduce from \eqref{eq inequality seminorm left with Groth seminorm} and \eqref{eq inequality seminorm right with Groth seminorm} that \begin{equation}\label{eq inequ bounde for the norm of a circ x with Grothendieck seminorm} \| a\circ x\| \leq\frac12 \left(\| a x \| +\| xa \|\right) \leq \sqrt{2} \ \| U_{a} (x \circ x^*) \|^{\frac12},
\end{equation} which seems to be an alternative inequality to the useful one in \cite[Lemma 3.5.2~$(ii)$]{HOS}.
Consequently, for $a\in \mathfrak{A}_{sa}$ and $z\in M(\mathfrak{A})$ we write $z = x+i y$ with $x,y\in \mathfrak{A}_{sa}$ and thus $$\| a\circ z \| \leq \| a\circ x \| + \|a\circ y \| \leq \sqrt{2} \ \| U_{a} (x \circ x^*) \|^{\frac12} + \sqrt{2} \ \| U_{a} (y \circ y^*) \|^{\frac12}\leq 2 \sqrt{2} \ \| U_{a} (z \circ z^*) \|^{\frac12},$$ which proves the desired statement. 
The question whether the topology on $M(\mathfrak{A})$ generated by all seminorms of the form $x\mapsto   \| U_{a} (x \circ x^*) \|^{\frac12}$ ($x\in M(\mathfrak{A})$) with $a$ in $\mathfrak{A}_{sa}$ coincides with the J-strict topology remains as an open challenge. }
\end{remark}

Let us recall the definition of the preHilbertian seminorm appearing in Grothendieck's\hyphenation{Grothen-dieck} inequalities for JB$^*$-algebras. For each positive normal functional $\phi$ in the predual of a JBW$^*$-algebra $\mathfrak{A}$, the sesquilinear form $(x,y)\mapsto \phi (x\circ y^*)$ is positive, and gives rise to a preHilbertian seminorm $\|a\|_{\phi}^2 := \phi (x\circ x^*)$ on $\mathfrak{A}$.  The \emph{strong$^*$ topology} of $\mathfrak{A}$ is the topology generated by all the seminorms $\|\cdot \|_{\phi}$ with $\phi$ running among all normal states of $\mathfrak{A}$ (cf. \cite{BarFri90}). In case that $\mathfrak{A}$ is a von Neumann algebra, this strong$^*$ topology is precisely the usual C$^*$-algebra strong$^*$ topology (see \cite[\S 3]{BarFri90}). The strong$^*$ topology is stronger than the weak$^*$ topology of $\mathfrak{A}$ (see \cite[Theorem 3.2]{BarFri90}) and the product of $\mathfrak{A}$ is jointly strong$^*$ continuous on bounded sets (see \cite[THEOREM in page 103]{RodPa91} or \cite[Proposition 2.4]{AlfsenShultz2003}).

\section{The topological strict dual of the multipliers algebra}\label{sec:3 Taylor topological dual of Multipliers with the strict topology}

Let $\mathfrak{A}$ be a JB$^*$-algebra. In this section we study the space of all J-strict continuous functionals on $M(\mathfrak{A})$ with the topology of uniform convergence on J-strict bounded subsets of $M(\mathfrak{A})$.\smallskip

In \cite{AkkLaa95,AkkLaa96} M. Akkar and M. Laayouni established a version of Cohen's factorization theorem for Jordan-Banach algebras admitting a bounded approximate identity. The arguments can be adapted to get a result on factorizations in Jordan modules. It seems worth to devote a few lines to recall the definition and the basic properties of a bounded approximate identity in a JB$^*$-algebra. An (increasing) \emph{approximate identity} or an \emph{approximate unit} in a JB$^*$-algebra $\mathfrak{A}$ is an increasing net $(e_{\lambda})$ in $\mathfrak{A}$ satisfying $0 \leq  e_{\lambda}$, $\|e_{\lambda}\| \leq 1$ for all $\lambda$ and $\displaystyle \lim_{\lambda} \|a\circ e_{\lambda} -a\| =0$ for all $a$ in $\mathfrak{A}$ (cf. \cite[Definition 1.29]{AlfsenShultz2003} or \cite[3.5.1]{HOS}). Every JB$^*$-algebra $\mathfrak{A}$ admits an (increasing) approximate identity $(e_{\lambda})$ (see \cite[Lemma 4]{Ed80}, \cite[Lemma 1.32]{AlfsenShultz2003} or \cite[Proposition 3.5.4]{HOS}). It is further known that $\displaystyle \lim_{\lambda} \| U_{\textbf{1}-e_{\lambda}} (a)\| =0$ for all $a\in \mathfrak{A}$ (cf. \cite[comments before Lemma 4]{Ed80}), which implies that $\|e_{\lambda}^2 \circ a - a\|\to 0$. For each positive functional $\varphi$ in the dual space, $\mathfrak{A}^*$, of $\mathfrak{A}$ we have $\displaystyle \|\varphi \| =\lim_{\lambda} \varphi (e_{\lambda})$ (see \cite[Proof of Lemma 3.6.5]{HOS}).\smallskip

The dual of $\mathfrak{A}$ is a Jordan-Banach module with its natural norm and the Jordan module product defined by 
\begin{equation}\label{eq Jordan product on the dual}
(\phi\circ a) (b) = \phi (a\circ b)\ \ \ \ (\phi\in \mathfrak{A}^*, a,b\in \mathfrak{A}).	
\end{equation}	

Let $\varphi$ be a non-zero positive functional in $\mathfrak{A}^*$, let $(e_{\lambda})$ be an approximate identity in $\mathfrak{A}$ and let $\textbf{1}$ stand for the unit in $\mathfrak{A}^{**}$.  By the Cauchy-Schwarz inequality we have $$\varphi (e_{\lambda}) =\varphi (e_{\lambda}\circ \textbf{1})  \leq \varphi (e_{\lambda}^2)^{\frac12} \varphi (\textbf{1})^{\frac12} = \varphi (e_{\lambda}^2)^{\frac12} \|\varphi\|^{\frac12} \leq \|\varphi\|,$$ witnessing that $\displaystyle \lim_{\lambda} \varphi (e_{\lambda}^2) = \|\varphi\|.$ A new application of the Cauchy-Schwarz inequality gives $$|\varphi (a\circ e_{\lambda} -a)|^2 =|\varphi (a\circ (\textbf{1}- e_{\lambda}))|^2 \leq \varphi (a\circ a^*) \varphi ((\textbf{1}- e_{\lambda})^2)\leq \|\varphi\| \varphi ((\textbf{1}- e_{\lambda})^2), $$ uniformly in $\|a\|\leq 1$. Therefore \begin{equation}\label{eq elambda is an approximate unit in the dual}
\lim_{\lambda} \|\varphi \circ e_{\lambda} -\varphi\| = \lim_{\lambda} \sup_{\|a\|\leq 1} |\varphi (a\circ e_{\lambda} -a)|\leq \|\varphi\|^{\frac12}  \lim_{\lambda} \varphi ((\textbf{1}- e_{\lambda})^2)^{\frac12} =0. 
\end{equation} Similarly,  \begin{equation}\label{eq elambda2 is an approximate unit in the dual}
\lim_{\lambda} \|\varphi \circ e_{\lambda}^2 -\varphi\|  =0. 
\end{equation} Clearly every functional $\phi\in \mathfrak{A}^*$ writes in the form $\phi = \phi_1 + i \phi_2$ with $\phi_1 = \frac{\phi+\phi^{\sharp}}{2}$ and $\phi_2 = \frac{\phi-\phi^{\sharp}}{2 i }$ symmetric, and hence $\phi_j$ can be regarded as a functional in the dual space of the JB-algebra $\mathfrak{A}_{sa}$. 
A Hahn-type decomposition of functionals in $\mathfrak{A}_{sa}^*$ asserts that each $\phi_j$ writes in the form $\phi_j = \phi_j^+- \phi_j^{-}$ with $\phi_{j}^{\pm}$ positive (see \cite[$(A26)$]{AlfsenShultz2003} or \cite[Lemma 1.2.6]{HOS}). 
Therefore, every functional in $\mathfrak{A}^*$ writes as a linear combination of four positive functionals, which together with \eqref{eq elambda is an approximate unit in the dual} and \eqref{eq elambda2 is an approximate unit in the dual} proves  \begin{equation}\label{eq elambda genral is an approximate unit in the dual}
\lim_{\lambda} \|\phi \circ e_{\lambda}^2 -\phi\| =  \lim_{\lambda} \|\phi \circ e_{\lambda} -\phi\| =0 
\end{equation}  for all $\phi\in \mathfrak{A}^{*}$ and every approximate unit $(e_{\lambda})$ in $\mathfrak{A}$.\smallskip

Let $M$ be a Jordan-Banach $\mathfrak{A}$ module. The space $\mathfrak{A} \oplus M$ can be
equipped with a structure of Jordan-Banach algebra with respect
to the product $$(a+x)\circ (b+y)=a\circ b+b\circ x
+a\circ y,$$ and norm given by $\mathfrak{A}\oplus M$ given by $\|a+x\|=\|a\|+\|x\|.$ This structure is known as the \emph{Jordan split-null extension} of $M$ and $\mathfrak{A}$ (cf. \cite{HejNik96}).

\begin{proposition}\label{p Cohen factorization for modules} Let $M$ be a Jordan-Banach module over a JB$^*$-algebra $\mathfrak{A}$. Let $(e_{\lambda})$ be an approximate identity for $\mathfrak{A}$. Suppose that $\displaystyle\lim_{{\lambda}}e_{\lambda}\circ x=x$ holds for every $x\in M.$
	Then for each $z\in M$ there exists $a\in \mathfrak{A}$ and $y\in M$ satisfying $z = U_{a} (y)$.  
\end{proposition}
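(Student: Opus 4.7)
The plan is to reduce the desired factorization in $M$ to the Cohen-type factorization theorem for Jordan-Banach algebras with bounded approximate identity of Akkar--Laayouni \cite{AkkLaa95,AkkLaa96}, working inside the Jordan split-null extension $\mathfrak{B}:=\mathfrak{A}\oplus M$ introduced just before the statement. First I would verify that the net $(\widehat{e}_{\lambda}):=((e_{\lambda},0))$ is a bounded approximate identity in $\mathfrak{B}$: it is bounded since $\|(e_{\lambda},0)\|=\|e_{\lambda}\|\leq 1$, and for any $(b,x)\in \mathfrak{B}$ the product formula yields
\[
(e_{\lambda},0)\circ (b,x) = (e_{\lambda}\circ b,\ e_{\lambda}\circ x),
\]
which tends in norm to $(b,x)$ by the approximate identity property of $(e_{\lambda})$ in $\mathfrak{A}$ together with the standing hypothesis $e_{\lambda}\circ x\to x$ for every $x\in M$.

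Next, I would apply the Akkar--Laayouni $U$-factorization to the element $(0,z)\in \mathfrak{B}$, obtaining $\widehat{u},\widehat{v}\in \mathfrak{B}$ with $(0,z)=U_{\widehat{u}}(\widehat{v})$. The key technical point, which I would check by inspecting their construction, is that $\widehat{u}$ is built as a norm-limit of Jordan polynomials in the approximate identity terms $(e_{\lambda},0)$; since each such term lies in the closed subalgebra $\mathfrak{A}\oplus\{0\}\simeq \mathfrak{A}$ of $\mathfrak{B}$, one may take $\widehat{u}=(a,0)$ with $a\in \mathfrak{A}$. Writing $\widehat{v}=(c,y)$ with $c\in \mathfrak{A}$ and $y\in M$, a direct computation from the split-null product gives
\[
U_{(a,0)}\bigl((c,y)\bigr) = \bigl(U_a(c),\ 2\, a\circ (a\circ y)-a^2\circ y\bigr),
\]
and equating second coordinates with $(0,z)$ produces $z = 2\, a\circ (a\circ y)-a^2\circ y = U_a(y)$ for the natural module action, as required.

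The principal obstacle I anticipate is justifying that $\widehat{u}$ may indeed be chosen inside the subalgebra $\mathfrak{A}\oplus\{0\}$. This amounts to a careful reading of the Akkar--Laayouni construction, in order to confirm that the approximating Jordan polynomials in the bounded approximate identity stay in $\mathfrak{A}\oplus\{0\}$ and that their limits do too; without this reduction a nonzero second component of $\widehat{u}$ would contribute additional module terms to $U_{\widehat{u}}(\widehat{v})$ which do not match the factorization shape $U_a(y)$ sought in the statement. An alternative route, should this bookkeeping prove inconvenient, is to mimic the classical Cohen recursion directly at the level of the module: one would choose a subsequence $(e_{\lambda_n})$ and construct $a$ as a norm-convergent Jordan polynomial in the $e_{\lambda_n}$'s whose quadratic representation $U_a$ becomes invertible in the unitization of $\mathfrak{A}$, then define $y$ as the norm-limit of $U_{a_n}^{-1}(z)$ in $M$, the convergence of this limit being ensured by the hypothesis $e_{\lambda}\circ x\to x$.
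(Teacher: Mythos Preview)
Your overall strategy---embed $M$ in the split-null extension $\mathfrak{B}=\mathfrak{A}\oplus M$, verify $(e_\lambda,0)$ is a bounded approximate identity, and invoke the Akkar--Laayouni $U$-factorization---is exactly the paper's route. The divergence is only in the reduction step, and the paper's version is cleaner and sidesteps precisely the obstacle you flag.

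Rather than forcing $\widehat{u}$ into $\mathfrak{A}\oplus\{0\}$ by inspecting the Akkar--Laayouni construction, the paper exploits the \emph{second} conclusion of their theorem: the factor $\widehat{v}=b+y$ lies in the norm closure of $U_{(\mathfrak{A}\oplus M)'}(z)$. A direct check shows $U_{(\mathfrak{A}\oplus M)'}(z)\subseteq M$ (since $z\in M$ and $M$ is an ideal of the split-null extension), whence $b=0$. The key observation you are missing is that once $\widehat{v}=(0,y)\in M$, the module component of $\widehat{u}=(a,x)$ becomes irrelevant: because $M\circ M=\{0\}$ in the split-null product, one computes $U_{(a,x)}\bigl((0,y)\bigr)=\bigl(0,U_a(y)\bigr)$ regardless of $x$. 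Thus $z=U_a(y)$ with no need to control $\widehat{u}$.

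So your approach would work, but it trades a one-line application of the stated Akkar--Laayouni closure property for a bookkeeping argument inside their proof. The paper's route is preferable because it uses only the theorem as stated and the split-null nilpotency $M\circ M=0$, which you have available anyway.
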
  

\begin{proof}
	$\mathfrak{A}\oplus M$ is a Jordan-Banach algebra with an approximate identity. Indeed, fix $a+x \in \mathfrak{A}\oplus M.$ We have 
	$$\lim_{{\lambda}} \| (a+x)\circ e_{\lambda}-(a+x)\|_{_{\mathfrak{A}\oplus M}}=\lim_{{\lambda}} \|a\circ e_{\lambda}- a\|_{\mathfrak{A}}+\| e_{\lambda}\circ x-x\|_{M}   =0$$
	Showing that $(e_{\lambda})$ is a bounded approximate identity for $\mathfrak{A} \oplus M$.
	Consequently, $\mathfrak{A} \oplus M$ satisfies Cohen's factorisation property (see \cite{AkkLaa95,AkkLaa96}).  Let us fix $z\in M.$ By \cite[Theorem II.2]{AkkLaa96} there exist $a+x,b+y\in \mathfrak{A}\oplus M$ such that $z=U_{a+x}(b+y).$ Moreover, the element $b+y$ lies in the norm closure of  ${U_{(\mathfrak{A}\oplus M)'}(z)},$ where $(\mathfrak{A}\oplus M)'$ stands for the unitization of $\mathfrak{A}\oplus M.$ Straightforward computations shows that $U_{(\mathfrak{A}\oplus M)'}(z)\subseteq M$. Thus $b+y\in \overline{U_{(\mathfrak{A}\oplus M)' }(z)}\subseteq M,$ and hence $b=0$. Finally, we have 
	$$z=\{a+x,y,a+x\}=\{a,y,a\} .$$\end{proof}

Now, by combining Proposition~\ref{p Cohen factorization for modules}  with \eqref{eq elambda genral is an approximate unit in the dual} we get:

\begin{corollary}\label{c dual factorizes} Let $\mathfrak{A}$ be a JB$^*$-algebra. Then for each $\phi \in\mathfrak{A}^*$ there exit $\varphi \in \mathfrak{A}^{*}$ and $a\in \mathfrak{A}$ such that $\phi = \varphi U_a = 2 (\varphi \circ a)\circ a - \varphi \circ a^2$.  
\end{corollary}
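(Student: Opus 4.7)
The plan is a direct application of Proposition~\ref{p Cohen factorization for modules} with the particular Jordan-Banach $\mathfrak{A}$-module $M = \mathfrak{A}^{*}$ whose module action is the one recalled in \eqref{eq Jordan product on the dual}, namely $(\phi\circ a)(b) = \phi(a\circ b)$.

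First I would check that the only non-trivial hypothesis of Proposition~\ref{p Cohen factorization for modules} is already in place: one needs an approximate identity $(e_{\lambda})\subseteq \mathfrak{A}$ such that $e_{\lambda}\circ \phi \to \phi$ in norm for every $\phi\in \mathfrak{A}^{*}$. This is precisely the content of \eqref{eq elambda genral is an approximate unit in the dual}, which was established immediately before the factorization proposition using the Hahn-type decomposition of functionals in $\mathfrak{A}^{*}_{sa}$ together with the Cauchy--Schwarz bounds \eqref{eq elambda is an approximate unit in the dual} and \eqref{eq elambda2 is an approximate unit in the dual} for positive functionals. So no additional work is required here.

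Applying Proposition~\ref{p Cohen factorization for modules} to an arbitrary $\phi\in \mathfrak{A}^{*}$, I would obtain $a\in \mathfrak{A}$ and $\varphi\in \mathfrak{A}^{*}$ with
\[
\phi \;=\; U_{a}(\varphi) \;=\; 2(a\circ \varphi)\circ a - a^{2}\circ \varphi,
\]
where $U_{a}$ is understood in the module sense and each ``$\circ$'' denotes the module action of $\mathfrak{A}$ on $\mathfrak{A}^{*}$.

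Finally I would unwind this identity: for every $b\in \mathfrak{A}$, using the module definition and the commutativity of the Jordan product of $\mathfrak{A}$,
\[
\phi(b) = 2\bigl((\varphi\circ a)\circ a\bigr)(b) - (\varphi\circ a^{2})(b)
       = 2\varphi\bigl(a\circ(a\circ b)\bigr) - \varphi(a^{2}\circ b)
       = \varphi\bigl(U_{a}(b)\bigr),
\]
which is exactly the asserted equality $\phi = \varphi\,U_{a}$. I do not foresee any real obstacle: the substantive content sits in Proposition~\ref{p Cohen factorization for modules} (essentially Akkar--Laayouni's Cohen-type factorization applied to the split-null extension $\mathfrak{A}\oplus \mathfrak{A}^{*}$) and in the approximate-identity estimate \eqref{eq elambda genral is an approximate unit in the dual}; this corollary is simply their combination.
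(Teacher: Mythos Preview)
Your proposal is correct and follows exactly the paper's approach: the corollary is stated immediately after \eqref{eq elambda genral is an approximate unit in the dual} and Proposition~\ref{p Cohen factorization for modules}, and the paper's proof is nothing more than the combination of these two ingredients. Your added unwinding of $U_a(\varphi)$ in the module sense to the composition $\varphi\, U_a$ on $\mathfrak{A}$ is a helpful clarification but introduces no new idea.
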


Let us focus next on the following technical result. We recall first that, by the Jordan identity, for every couple of elements $a,b$ in a JB$^*$-triple $E$ the mapping $\delta (a,b) = L(a,b) -L(b,a)$ is a triple derivation on $E$. We recall that if $k$ is a skew symmetric element in a unital JB$^*$-algebra $\mathfrak{B}$, the mapping $\delta (\frac12 k, \textbf{1}) = L(\frac12 k,\textbf{1}) - L(\textbf{1}, \frac12 k) = M_{k}$ is a triple derivation on $\mathfrak{B}$ (cf. \cite[Lemma 2]{HoMarPeRu}). Therefore, for each skew symmetric element $k$ in an arbitrary JB$^*$-algebra $\mathfrak{A}$, the mapping $M_k : a\mapsto k\circ a$ ($a\in \mathfrak{A}$) is a triple derivation on $\mathfrak{A}$.    

\begin{lemma}\label{l strict convergence of Jordan times an approximate unit} Let $(e_{\lambda})$ be an approximate identity in a JB$^*$-algebra $\mathfrak{A}$. Then for each $x\in M(\mathfrak{A})$ the net $(e_{\lambda}\circ x)_{\lambda}$ converges to $x$ in the J-strict topology of $M(\mathfrak{A})$.
\end{lemma}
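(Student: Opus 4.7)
My plan is to first reduce the statement to the self-adjoint case and then to the case where $a$ is a square in $\mathfrak{A}$, and finally to apply a linearisation of the Jordan identity that reduces everything to limits of the form $\|e_\lambda\circ c - c\|\to 0$ for $c\in\mathfrak{A}$.

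Since the J-strict topology is locally convex, by splitting $x = h + ik$ with $h,k \in M(\mathfrak{A})_{sa}$ and $a = a_1 + ia_2$ with $a_1,a_2 \in \mathfrak{A}_{sa}$, it suffices to prove $\|(e_\lambda \circ h - h) \circ a\| \to 0$ for $h \in M(\mathfrak{A})_{sa}$ and $a \in \mathfrak{A}_{sa}$. A second reduction uses the continuous functional calculus in $\mathfrak{A}$ to write $a = a_+ - a_-$ and then $a_\pm = c_\pm^2$ with $c_\pm = a_\pm^{1/2} \in \mathfrak{A}_+$, so that by linearity it is enough to treat $a = b^2$ with $b \in \mathfrak{A}_+$.

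The key algebraic step is the following linearisation of the Jordan identity $(x^2\circ y)\circ x = x^2\circ (y\circ x)$: substituting $x\to x+tz$ and collecting the coefficient of $t$ yields, in any Jordan algebra,
$$x^2\circ(y\circ z) = (x^2\circ y)\circ z + 2((x\circ z)\circ y)\circ x - 2(x\circ z)\circ(x\circ y).$$
I apply this in $\mathfrak{A}^{**}$ with $x = b$, $y = h$, $z := f_\lambda = \textbf{1} - e_\lambda$. Since $h \circ f_\lambda = -(e_\lambda\circ h - h)$, the commutativity of the Jordan product rewrites $(e_\lambda\circ h - h)\circ b^2$ (up to sign) as the sum of the three terms on the right-hand side of the identity above.

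Each of these three summands tends to $0$ in norm as $\lambda$ grows. The first, $(b^2\circ h)\circ f_\lambda$, vanishes because $b^2\circ h\in\mathfrak{A}$ (as $h\in M(\mathfrak{A})$ and $b^2\in\mathfrak{A}$) and the approximate-identity property yields $(b^2\circ h)\circ e_\lambda \to b^2\circ h$ in norm; the remaining two summands both contain the factor $b\circ f_\lambda = b - b\circ e_\lambda$, whose norm vanishes because $b\in\mathfrak{A}$, so the JB$^*$-submultiplicativity $\|u\circ v\|\leq \|u\|\|v\|$ forces both products to $0$.

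The hard part is the choice of identity. Naive manipulations based on the antisymmetry of the associator $[u,v,w]=(u\circ v)\circ w - u\circ(v\circ w)$ turn out to be tautological, and attempts to dominate $(e_\lambda\circ x - x)\circ a$ by $\|U_a(e_\lambda\circ x - x)\|$ via the JB-inequality $\|u\circ v\|^2\leq\|u\|\,\|U_v(u)\|$ that drives the proof of Proposition~\ref{p strict density in the multipliers} land back on the same quantity $\|a\circ(e_\lambda\circ x - x)\|$ one is trying to control. The linearised Jordan identity above is the right tool precisely because the non-ideal factor $h\in M(\mathfrak{A})$ is paired with two factors of $b\in\mathfrak{A}$ in every resulting product, so that whenever $f_\lambda$ also appears the resulting element sits in $\mathfrak{A}$ and vanishes by the approximate-identity property, breaking the circularity.
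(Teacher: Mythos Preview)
Your proof is correct, and it takes a genuinely different route from the paper's.

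The paper exploits the JB$^*$-\emph{triple} structure: since $ie_\lambda$ is skew-symmetric, the Jordan multiplication $M_{ie_\lambda}$ is a triple derivation on $M(\mathfrak{A})$ (via \cite[Lemma~2]{HoMarPeRu}), and applying the Leibniz rule to the triple product $\{a,a,x\}=a^2\circ x$ for $a\in\mathfrak{A}_{sa}$ isolates $a^2\circ(e_\lambda\circ x)$ as a difference of three terms, each of which converges to $\{a,a,x\}$ because $e_\lambda\circ a\to a$ and $e_\lambda\circ\{a,a,x\}\to\{a,a,x\}$ in norm. Your argument stays entirely inside the Jordan algebra: the linearised Jordan identity
\[
b^2\circ(h\circ f_\lambda)=(b^2\circ h)\circ f_\lambda+2\bigl((b\circ f_\lambda)\circ h\bigr)\circ b-2(b\circ f_\lambda)\circ(b\circ h)
\]
achieves the same decomposition without ever invoking the triple product or triple derivations. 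Each summand on the right contains either $(b^2\circ h)\circ f_\lambda$ with $b^2\circ h\in\mathfrak{A}$, or the factor $b\circ f_\lambda$ with $b\in\mathfrak{A}$, so all three vanish by the approximate-identity property and submultiplicativity. Your approach is more elementary and self-contained; the paper's is more conceptual and meshes with the triple-derivation machinery already deployed in the proof of Theorem~\ref{t the strict topology is complete}. Both reduce to the same ``$a=b^2$ with $b\in\mathfrak{A}_{sa}$'' endgame.
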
	 

\begin{proof} As we have seen in the comments preceding this lemma, the mapping $M_{i e_{\lambda}}$ is a triple derivation on $M(\mathfrak{A})$. Therefore, for each $a\in \mathfrak{A}_{sa}$ we have 
$$i e_{\lambda}\circ \{a,a,x\} =  \{i e_{\lambda}\circ a,a,x\} +  \{a, i e_{\lambda}\circ a,x\} +  \{a,a, i e_{\lambda}\circ x\}. $$ Let us pay attention to each summand in the previous identity. Since $\{a,a,x\}, a\in \mathfrak{A}$ and  $(e_{\lambda})$ is an approximate unit, it follows that the nets $(e_{\lambda}\circ \{a,a,x\}),$ $( \{e_{\lambda}\circ a,a,x\})$, and  $(\{a,e_{\lambda}\circ a,x\})$ converge to $\{a,a,x\}$ in norm. Therefore,  $a^2 \circ (e_{\lambda}\circ x) =  \{a,a, e_{\lambda}\circ x\}\to \{a,a, x\} = a^2 \circ x$ in norm. A standard polarization argument, combined with the existence of square roots of positive elements, shows that $(e_{\lambda}\circ x)_{\lambda}$ converges to $x$ in the J-strict topology of $M(\mathfrak{A})$.
\end{proof}

\begin{remark}\label{r Lemma 3.3 is valid for striclty convergent nets of sa elements with limit 1}{\rm The proof given above is actually valid to obtain the following conclusion:}  Let $(a_{\lambda})$ be a net of self-adjoint elements in a JB$^*$-algebra $\mathfrak{A}$ converging to $y\in M(\mathfrak{A})_{sa}$ in the J-strict topology of $M(\mathfrak{A})$. Then for each $x\in M(\mathfrak{A})$ the net $(a_{\lambda}\circ x)_{\lambda}$ converges to $y \circ x$ in the J-strict topology of $M(\mathfrak{A})$.
\end{remark}

Our next result, which is a Jordan version of \cite[Corollary 2.2]{Tay70}, offers a good understanding of J-strict continuous functionals on the multipliers algebra.  Let us make an observation first, by the Hahn-Banach theorem, every functional $\phi$ in the dual of a JB$^*$-algebra $\mathfrak{A}$ admits a norm-preserving linear extension to a functional $\widehat{\phi}$ in $M(\mathfrak{A})^*$. However, nothing guarantees that $\widehat{\phi}$ is  J-strict continuous.

\begin{proposition}\label{p dual of M(A) with strict topology} Let $\mathfrak{A}$ be a JB$^*$-algebra. Then a functional $\phi : M(\mathfrak{A})\to \mathbb{C}$ is J-strict continuous if and only if there exist  $a\in \mathfrak{A}$ and $\varphi\in \mathfrak{A}^*$ such that $\phi (x) = \varphi \left( U_a (x) \right)$ for all $x\in M(\mathfrak{A})$. Consequently, every functional $\phi\in  \mathfrak{A}^*$ admits a linear extension to a J-strict continuous functional $\widehat{\phi}: M(\mathfrak{A})\to \mathbb{C}$.
\end{proposition}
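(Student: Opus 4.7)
The plan is to dispatch the two implications in sequence, with the hard direction reducing to the Cohen-type factorization in Corollary~\ref{c dual factorizes} together with the density from Proposition~\ref{p strict density in the multipliers}.

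For the sufficiency direction, fix $a\in\mathfrak{A}$ and $\varphi\in\mathfrak{A}^{*}$ and suppose $\phi(x)=\varphi(U_{a}(x))$. Expanding $U_{a}(x)=2(a\circ x)\circ a-a^{2}\circ x$ and taking a net $(x_{\lambda})$ with $x_{\lambda}\to 0$ in the J-strict topology, the defining seminorms yield $\|a\circ x_{\lambda}\|=\rho_{a}(x_{\lambda})\to 0$ and $\|a^{2}\circ x_{\lambda}\|=\rho_{a^{2}}(x_{\lambda})\to 0$. Combined with the estimate $\|(a\circ x_{\lambda})\circ a\|\leq\|a\|\,\|a\circ x_{\lambda}\|$ and the norm continuity of $\varphi$, this forces $\phi(x_{\lambda})\to 0$.

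For the converse, assume $\phi$ is J-strict continuous. Unpacking the definition of the J-strict topology at $0$, there exist $b_{1},\dots,b_{n}\in\mathfrak{A}$ and $C>0$ such that
$$|\phi(x)|\leq C\max_{1\leq i\leq n}\|b_{i}\circ x\|\qquad(x\in M(\mathfrak{A})).$$
Since $\|b_{i}\circ x\|\leq\|b_{i}\|\,\|x\|$, the functional $\phi$ is norm continuous on $M(\mathfrak{A})$, so its restriction $\phi|_{\mathfrak{A}}$ belongs to $\mathfrak{A}^{*}$. I now apply Corollary~\ref{c dual factorizes} to $\phi|_{\mathfrak{A}}$ to obtain $a\in\mathfrak{A}$ and $\varphi\in\mathfrak{A}^{*}$ with $\phi(y)=\varphi(U_{a}(y))$ for every $y\in\mathfrak{A}$. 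Next I extend this identity to $M(\mathfrak{A})$: define $\tilde{\phi}(x):=\varphi(U_{a}(x))$ for $x\in M(\mathfrak{A})$, which is well-defined because $a\circ x\in\mathfrak{A}$ (by the defining property of $M(\mathfrak{A})$), whence $U_{a}(x)=2(a\circ x)\circ a-a^{2}\circ x\in\mathfrak{A}$. By the sufficiency direction, $\tilde{\phi}$ is J-strict continuous on $M(\mathfrak{A})$, and by construction $\tilde{\phi}|_{\mathfrak{A}}=\phi|_{\mathfrak{A}}$. Since two J-strict continuous functionals that agree on the J-strict dense subspace $\mathfrak{A}$ (Proposition~\ref{p strict density in the multipliers}) must coincide, $\phi=\tilde{\phi}$ throughout $M(\mathfrak{A})$.

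The final clause is an immediate by-product: given an arbitrary $\phi\in\mathfrak{A}^{*}$, apply Corollary~\ref{c dual factorizes} to write $\phi=\varphi\circ U_{a}$ with $a\in\mathfrak{A}$ and $\varphi\in\mathfrak{A}^{*}$, and set $\widehat{\phi}(x):=\varphi(U_{a}(x))$ for $x\in M(\mathfrak{A})$; this is linear, extends $\phi$, and is J-strict continuous by sufficiency. The real substance of the argument is packaged in Corollary~\ref{c dual factorizes}: it is that factorization that collapses the finite family of seminorms arising from J-strict continuity into a single $U_{a}$-seminorm, a step one would otherwise have to engineer by hand and which would be the main obstacle without the Cohen-type result already in place.
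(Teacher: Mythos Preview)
Your proof is correct and follows essentially the same route as the paper: both directions hinge on Corollary~\ref{c dual factorizes} applied to $\phi|_{\mathfrak{A}}$, with the sufficiency direction being a direct check. The only difference is in how you pass from $\phi|_{\mathfrak{A}}=\varphi\, U_a$ to $\phi=\varphi\, U_a$ on all of $M(\mathfrak{A})$: the paper does this explicitly via Lemma~\ref{l strict convergence of Jordan times an approximate unit} (taking $e_\lambda\circ x\to x$ J-strictly and computing both sides), whereas you argue that the two J-strict continuous functionals $\phi$ and $\tilde\phi$ agree on the J-strict dense subspace $\mathfrak{A}$ (Proposition~\ref{p strict density in the multipliers}) and hence everywhere. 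Your variant is slightly more economical since it bypasses Lemma~\ref{l strict convergence of Jordan times an approximate unit} entirely.
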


\begin{proof} Clearly, given $a\in \mathfrak{A}$ and $\varphi\in \mathfrak{A}^*$ the functional $\phi : M(\mathfrak{A})\to \mathbb{C}$, $\phi (x) = \varphi \left( U_a (x) \right)$ is J-strict continuous. Reciprocally, suppose that $\phi : M(\mathfrak{A})\to \mathbb{C}$ is J-strict continuous. Since the norm topology is stronger than the J-strict topology the functional $\phi$ lies in $M(\mathfrak{A})^*$, and hence its restriction $\phi|_{\mathfrak{A}}$ belongs to $\mathfrak{A}^*$.  Find, via Corollary~\ref{c dual factorizes}, $a\in \mathfrak{A}$ and $\varphi \in \mathfrak{A}^*$ such that $\phi|_{\mathfrak{A}}= \varphi U_{a}$. Let $(e_{\lambda})$ be an approximate identity in $\mathfrak{A}$. The identity $$\phi (e_{\lambda}\circ x)  =   \phi|_{\mathfrak{A}} (e_{\lambda}\circ x) = \varphi U_{a} (e_{\lambda}\circ x),$$ holds for all $x\in M(\mathfrak{A})$ and every $\lambda$. Lemma~\ref{l strict convergence of Jordan times an approximate unit} implies that $(e_{\lambda}\circ x)_{\lambda}$ converges to $x$ in the J-strict topology, and since $\phi$ is J-strict continuous the left hand side in the previous identity tends to $\phi (x)$. On the right hand side we apply that $$ U_{a} (e_{\lambda}\circ x)  = 2 a\circ (a\circ (e_{\lambda}\circ x)) - a^2 \circ (e_{\lambda}\circ x)$$ is a net in $\mathfrak{A}$ converging in norm to  $U_{a} (x)$ because  $(e_{\lambda}\circ x)\to x$ in the J-strict topology. Therefore $\phi (x) =   \varphi U_{a} (x)$, for all $x\in M(\mathfrak{A})$, which concludes the proof.	The last statement is clear from the factorization in Corollary~\ref{c dual factorizes}.
\end{proof}

Let us take a J-strict bounded set $F$ in the multipliers algebra of a JB$^*$-algebra $\mathfrak{A}$, that is, for each $a\in \mathfrak{A}$ the set $\{\|a \circ x\| : x\in F\}$ is bounded, equivalently, the set $\{M_x : x\in F\}$ is point-norm bounded when each $M_x$ is regarded as a bounded linear operator on $\mathfrak{A}$. It follows from the uniform boundedness principle that $\{M_x : x\in F\}$ is norm bounded when regarded in $B(\mathfrak{A})$. In particular,  J-strict bounded sets and norm bounded sets in $M(\mathfrak{A})$ coincide, and the topology on $(M (\mathfrak{A}), S(M(\mathfrak{A}),\mathfrak{A}) )^*$ of uniform convergence on J-strict bounded subsets of $M(\mathfrak{A})$ is precisely the norm topology. We can finally describe the dual of $(M(\mathfrak{A}), S(M(\mathfrak{A}),\mathfrak{A}))$, a result which generalizes Taylor's theorem in \cite{Tay70}.  

\begin{theorem}\label{t dual of multipliers isomorphic to the dual of A} Let $\mathfrak{A}$ be a JB$^*$-algebra. Then the mapping $$R: (M(\mathfrak{A}), S(M(\mathfrak{A}),\mathfrak{A}))^*\to \mathfrak{A}^*, \ \  \phi \mapsto \phi|_{\mathfrak{A}}$$ is an isomorphism and a surjective isometry.  
\end{theorem}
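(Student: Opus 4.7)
The plan is to verify the four properties of $R$ (well-defined and contractive, injective, surjective, and norm-preserving) using the tools already developed in the section. The map $R$ is clearly linear, and since the J-strict topology is weaker than the norm topology, every J-strict continuous functional on $M(\mathfrak{A})$ is norm continuous, so $R(\phi) = \phi|_{\mathfrak{A}} \in \mathfrak{A}^*$ with $\|R(\phi)\| \leq \|\phi\|$. Injectivity is immediate from Proposition~\ref{p strict density in the multipliers}: if $\phi|_{\mathfrak{A}} = 0$ and $\phi$ is J-strict continuous, then $\phi$ vanishes on the J-strict closure of $\mathfrak{A}$, which is all of $M(\mathfrak{A})$. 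Surjectivity is the content of the last sentence of Proposition~\ref{p dual of M(A) with strict topology}: every $\psi \in \mathfrak{A}^*$ admits a J-strict continuous extension $\widehat{\psi}$ to $M(\mathfrak{A})$, and $R(\widehat{\psi}) = \psi$.

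The main point to verify is the reverse isometry inequality $\|\phi\| \leq \|R(\phi)\|$ for any J-strict continuous $\phi \in (M(\mathfrak{A}), S(M(\mathfrak{A}),\mathfrak{A}))^*$. The idea is to approximate an arbitrary $x \in M(\mathfrak{A})$ by a J-strictly convergent net lying in $\mathfrak{A}$ and controlled in norm by $\|x\|$. Let $(e_\lambda)$ be an approximate identity of $\mathfrak{A}$. Then $e_\lambda \circ x \in \mathfrak{A}$ by the very definition of $M(\mathfrak{A})$, and the Jordan-Banach inequality gives
\begin{equation*}
\|e_\lambda \circ x\| \leq \|e_\lambda\| \, \|x\| \leq \|x\|.
\end{equation*}
Lemma~\ref{l strict convergence of Jordan times an approximate unit} then asserts that $(e_\lambda \circ x) \to x$ in the J-strict topology, so $\phi(e_\lambda \circ x) \to \phi(x)$ by J-strict continuity of $\phi$. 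Since each $e_\lambda \circ x$ belongs to $\mathfrak{A}$, we have $\phi(e_\lambda \circ x) = R(\phi)(e_\lambda \circ x)$, whence
\begin{equation*}
|\phi(x)| = \lim_\lambda |R(\phi)(e_\lambda \circ x)| \leq \|R(\phi)\| \, \limsup_\lambda \|e_\lambda \circ x\| \leq \|R(\phi)\| \, \|x\|.
\end{equation*}
Taking the supremum over the unit ball of $M(\mathfrak{A})$ yields $\|\phi\| \leq \|R(\phi)\|$, which together with the opposite inequality already noted gives $\|\phi\| = \|R(\phi)\|$.

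Combining the four steps, $R$ is a linear bijection between $(M(\mathfrak{A}), S(M(\mathfrak{A}),\mathfrak{A}))^*$ (equipped with the topology of uniform convergence on J-strict bounded sets, which coincides with the norm topology by the paragraph preceding the theorem) and $\mathfrak{A}^*$, and it preserves norms. The only subtle step is the isometric inequality, and the obstacle there dissolves once one observes that Lemma~\ref{l strict convergence of Jordan times an approximate unit} supplies an approximating net $(e_\lambda \circ x)$ inside $\mathfrak{A}$ whose norm is automatically dominated by $\|x\|$; this is the JB$^*$ analogue of the norm-control trick used in Taylor's original C$^*$-algebra argument in \cite{Tay70}, and it depends crucially on the fact that Jordan multiplication by an approximate unit is a contraction and that the strict convergence $e_\lambda \circ x \to x$ holds for every $x \in M(\mathfrak{A})$.
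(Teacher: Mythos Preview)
Your proof is correct and follows essentially the same approach as the paper's own argument: well-definedness, injectivity via J-strict density (Proposition~\ref{p strict density in the multipliers}), surjectivity via Proposition~\ref{p dual of M(A) with strict topology}, and the isometry via the approximate-unit trick $\|e_\lambda\circ x\|\leq\|x\|$ combined with Lemma~\ref{l strict convergence of Jordan times an approximate unit}. The only cosmetic difference is that the paper cites the proof of Proposition~\ref{p dual of M(A) with strict topology} for the convergence $\phi(e_\lambda\circ x)\to\phi(x)$, whereas you cite Lemma~\ref{l strict convergence of Jordan times an approximate unit} directly; these are the same fact.
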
 

\begin{proof} We have already commented that the topology on $(M (\mathfrak{A}), S(M(\mathfrak{A}),\mathfrak{A}) )^*$ of uniform convergence on J-strict bounded subsets of $M(\mathfrak{A})$ agrees with the norm topology. The mapping $R$ is clearly well defined and linear. The injectivity of $R$ follows from Proposition~\ref{p strict density in the multipliers}, while the surjectivity is a consequence of Proposition~\ref{p dual of M(A) with strict topology} (see also Corollary~\ref{c dual factorizes}). Finally, given $\phi\in  (M(\mathfrak{A}), S(M(\mathfrak{A}),\mathfrak{A}))^*$, we have seen in the proof of Proposition~\ref{p dual of M(A) with strict topology} that for each approximate identity $(e_{\lambda})$ in $\mathfrak{A}$ and $x\in M(\mathfrak{A})$, the net $(\phi (x\circ e_{\lambda}))$ converges to $\phi(x)$. Clearly, $x\circ e_{\lambda}\in \mathfrak{A}$ with $\|x\circ e_{\lambda}\|\leq \|x\|$, and thus $| \phi (x\circ e_{\lambda}) |\leq \| \phi|_{\mathfrak{A}} \|  \|x\| = \|R(\phi)\|  \|x\|$, witnessing that $\|\phi\|\leq \|R(\phi)\|\leq \|\phi\|$. 
\end{proof}

We finish this section with a discussion on J-strict equicontinuous families of functionals on $M(\mathfrak{A})$. If $A$ is a C$^*$-algebra, a result due to Taylor proves that a family $\mathcal{F}$ of strict continuous functionals in $M(A)^*$ is strict equicontinuous if and only if it is norm-bounded and for any approximate identity $(e_{\lambda})$ in $A$ we have $$\|\varphi- \varphi e_{\lambda} - e_{\lambda}\varphi + e_{\lambda} \varphi e_{\lambda}\| = \|\varphi - \varphi U_{e_{\lambda}}  \|\to 0,$$ uniformly on $\varphi \in \mathcal{F}$ \cite[Theorem 2.6]{Tay70}. In the setting of JB$^*$-algebras we can prove the necessary condition.

\begin{proposition}\label{p necesary condition for strict equicontinuity} Let $\mathcal{F}$ be a family of functionals in $(M(\mathfrak{A}), S(M(\mathfrak{A}), \mathfrak{A}))^*$, where $\mathfrak{A}$ is a JB$^*$-algebra. Suppose $\mathcal{F}$ is J-strict equicontinuous and $(e_{\lambda})$ is an approximate identity in $\mathfrak{A}$. Then $\mathcal{F}$ is norm bounded and $ \|\varphi - \varphi U_{e_{\lambda}}  \|\to 0,$  uniformly on $\varphi \in \mathcal{F}.$
\end{proposition}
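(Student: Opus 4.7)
Norm-boundedness of $\mathcal{F}$ is a direct consequence of the equicontinuity hypothesis. A J-strict $0$-neighborhood in $M(\mathfrak{A})$ has the form $V=\{x:\max_j\rho_{a_j}(x)<\delta\}$ for finitely many $a_1,\dots,a_n\in\mathfrak{A}$ and some $\delta>0$; equicontinuity of $\mathcal{F}$ at $0$ thus produces a constant $C>0$ with $|\varphi(x)|\leq C\max_j\|x\circ a_j\|$ for every $\varphi\in\mathcal{F}$ and $x\in M(\mathfrak{A})$. Specializing to $\|x\|_{M(\mathfrak{A})}\leq 1$ gives $|\varphi(x)|\leq C\max_j\|a_j\|$, whence $\sup_{\varphi\in\mathcal{F}}\|\varphi\|_{M(\mathfrak{A})^*}\leq C\max_j\|a_j\|$.

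For the uniform convergence, the plan is to emulate Taylor's C$^*$-algebraic argument using the Jordan factorization tools developed in this section. Theorem~\ref{t dual of multipliers isomorphic to the dual of A} identifies $(M(\mathfrak{A}),S(M(\mathfrak{A}),\mathfrak{A}))^*$ isometrically with $\mathfrak{A}^*$ via restriction, so
\[
\|\varphi-\varphi U_{e_\lambda}\|_{M(\mathfrak{A})^*} = \sup_{y\in B_\mathfrak{A}}|\varphi(y-U_{e_\lambda}(y))|.
\]
Next, I would invoke Corollary~\ref{c dual factorizes} to factor each $\varphi|_\mathfrak{A}$ as $\omega_\varphi\, U_{a_\varphi}$. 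The crucial refinement is to perform this factorization \emph{uniformly} across $\mathcal{F}$: the equicontinuity inequality exhibits $\varphi|_\mathfrak{A}$ as factoring through the bounded linear map $y\mapsto(y\circ a_j)_j$, so Hahn-Banach together with $\ell^1$--$\ell^\infty$ duality yields representations $\varphi|_\mathfrak{A}=\sum_j\psi_j^\varphi\circ a_j$ with $\sum_j\|\psi_j^\varphi\|\leq C$. Consolidating the fixed finite set $\{a_j\}$ into a single positive element $a\in\mathfrak{A}_{sa}^+$ (for instance via functional calculus from $\sum_j a_j^*\circ a_j$ inside an appropriate JB$^*$-subalgebra) and reorganizing the Jordan module action yields a uniform factorization $\varphi|_\mathfrak{A}=\omega_\varphi U_a$ with $\|\omega_\varphi\|$ bounded independently of $\varphi\in\mathcal{F}$.

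With such a uniform factorization in hand,
\[
|\varphi(y)-\varphi(U_{e_\lambda}(y))|=|\omega_\varphi(U_a(y)-U_a U_{e_\lambda}(y))|\leq\|\omega_\varphi\|\,\|U_a-U_a U_{e_\lambda}\|_{op,\,\mathfrak{A}\to\mathfrak{A}},
\]
so the proposition reduces to proving the operator-norm convergence $\|U_a-U_a U_{e_\lambda}\|_{op}\to 0$. With $f=\textbf{1}-e_\lambda$ and the linearization $U_{\textbf{1}}-U_{e_\lambda}=U_f+2U_{f,e_\lambda}$, one decomposes $U_a(y-U_{e_\lambda}(y))=U_a U_f(y)+2U_a U_{f,e_\lambda}(y)$ and controls each summand using the fundamental identity $U_a U_f U_a=U_{U_a(f)}$ together with the norm convergences $\|a\circ f\|=\|a-a\circ e_\lambda\|\to 0$ and $\|U_a(f)\|=\|a^2-U_a(e_\lambda)\|\to 0$. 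The main obstacle is exactly this operator-norm step: in the C$^*$ case it follows transparently from the associative identity $aya-ae_\lambda y e_\lambda a=a(\textbf{1}-e_\lambda)ya+ae_\lambda y(\textbf{1}-e_\lambda)a$, giving the bound $\|U_a(y)-U_a U_{e_\lambda}(y)\|\leq 2\|a\|\|a(\textbf{1}-e_\lambda)\|\|y\|$, whereas its Jordan counterpart requires the Shirshov-Cohn theorem applied to the (special) $2$-generated JB$^*$-subalgebra of $\mathfrak{A}$ generated by $\{a,e_\lambda\}$ together with delicate manipulations of Jordan $U$-operators to transfer the C$^*$-style bound into the ambient JB$^*$-algebra.
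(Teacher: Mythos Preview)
Your norm-boundedness argument is correct and matches the paper's. The uniform-convergence part, however, has two genuine gaps, and the paper's route avoids both by a much more direct argument.

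First, the ``uniform factorization'' step is not justified. From the equicontinuity inequality you correctly extract $\varphi|_{\mathfrak{A}}=\sum_j \psi_j^{\varphi}\circ a_j$ with $\sum_j\|\psi_j^{\varphi}\|\leq C$, but the passage from this to a representation $\varphi|_{\mathfrak{A}}=\omega_\varphi\, U_a$ with a \emph{single} fixed $a\in\mathfrak{A}$ and $\sup_\varphi\|\omega_\varphi\|<\infty$ is a genuine leap in the Jordan setting; the phrase ``consolidating the fixed finite set $\{a_j\}$ into a single positive element $a$'' hides exactly the difficulty. Second, and more seriously, the Shirshov--Cohn manoeuvre you propose for $\|U_a-U_aU_{e_\lambda}\|_{op}\to 0$ cannot work as stated: Shirshov--Cohn gives specialness only for the JB$^*$-subalgebra generated by the two self-adjoint elements $a$ and $e_\lambda$, but the operator norm is a supremum over \emph{all} $y$ in the unit ball of $\mathfrak{A}$, and such $y$ need not lie in that subalgebra. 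The associative identity $aya-ae_\lambda y e_\lambda a=a(\textbf{1}-e_\lambda)ya+ae_\lambda y(\textbf{1}-e_\lambda)a$ uses products like $a(\textbf{1}-e_\lambda)$ that have no Jordan meaning once $y$ is outside the special subalgebra, and Macdonald's theorem (which handles identities in three variables) only transfers polynomial identities, not norm inequalities involving the associative product.

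The paper sidesteps both issues entirely. It never factors $\varphi$; instead it feeds the relevant element directly into the equicontinuity bound $|\varphi(x)|\leq \delta^{-1}\sum_i\|x\circ a_i\|$ (with the $a_i$ chosen positive) and then controls each summand $\|U_{\textbf{1}-e_\lambda}(x)\circ a_i\|$ for $x\in M(\mathfrak{A})_{sa}$, $\|x\|\leq 1$, by the JB-algebra inequality $\|c\circ d\|^2\leq\|c\|\,\|U_d(c)\|$ (valid for $c\geq 0$) together with the fundamental identity $U_{U_d(c)}=U_d U_c U_d$. This yields
\[
\|U_{\textbf{1}-e_\lambda}(x)\circ a_i\|^2\leq \|a_i\|\,\|U_{\textbf{1}-e_\lambda}U_x U_{\textbf{1}-e_\lambda}(a_i)\|\leq \|a_i\|\,\|U_{\textbf{1}-e_\lambda}(a_i)\|,
\]
and $\|U_{\textbf{1}-e_\lambda}(a_i)\|\to 0$ for each of the finitely many $a_i$, giving the uniform estimate immediately. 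No uniform Cohen-type factorization, no Shirshov--Cohn, and the argument stays entirely within elementary Jordan-algebraic inequalities.
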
 

\begin{proof} Since every J-strict open neighbourhood of $0$ contains a norm open neighbourhood, the equicontinuity of the family $\mathcal{F}$ implies that it is norm bounded.  Let $(e_{\lambda})$ be an approximate identity in $\mathfrak{A}$. Since $\mathcal{F}$ is J-strict equicontinuous there exist positive elements $a_1,\ldots, a_m\in \mathfrak{A}$ and $\delta>0$ such that $|\varphi (x)|< 1$ for all $\varphi \in \mathcal{F}$ when $x\in\{ y \in M(\mathfrak{A}): \| a_i \circ y\| < \delta, \ \forall i =1,\ldots, m \}$ --let us recall that every element in $\mathfrak{A}$ is a linear combination of four positive elements. It is standard to check that, under these conditions, we have 
$\displaystyle  |\varphi ( x ) |\leq \delta^{-1} \sum_{i=1}^m \| x\circ a_i\|,$
 for all $x\in M(\mathfrak{A})$, $\varphi\in \mathcal{F}$. 	Therefore, given $x\in M(\mathfrak{A})$ with $\|x\|\leq 1$ and $\varphi\in \mathcal{F}$ we have \begin{equation}\label{eq first estimation of the norm of varphi minus varphi Ulambda} \left|\left(\varphi -  \varphi U_{e_{\lambda}} \right) (x) \right| = \left|\varphi U_{\textbf{1}-e_{\lambda}} (x) \right| \leq \delta^{-1} \sum_{i=1}^m \| U_{\textbf{1}-e_{\lambda}} (x) \circ a_i\|.
 \end{equation} In order to bound each summand on the right hand side of the previous inequality we first observe that we can assume without loss of generality that $x\in M(\mathfrak{A})_{sa}$ with $\|x\|\leq 1$. We also recall a celebrated inequality affirming that $$\| c\circ d \|^2 \leq \|c\| \|U_{d} (c)\|,$$  for every couple of self-adjoint elements $c,d$ in a JB$^*$-algebra with $c$ positive. By applying this inequality with $c = a_i$ and $d =  U_{\textbf{1}-e_{\lambda}} (x)$ we get $$\begin{aligned} \| U_{\textbf{1}-e_{\lambda}} (x) \circ a_i\|^2 & \leq \|a_i \| \|U_{U_{\textbf{1}-e_{\lambda}} (x)} (a_i )\| = \|a_i \| \|U_{\textbf{1}-e_{\lambda}} U_{x} U_{\textbf{1}-e_{\lambda}} (a_i )\| \\
 & \leq  \|a_i \| \|\textbf{1}-e_{\lambda}\|^2  \|x\|^2  \| U_{\textbf{1}-e_{\lambda}} (a_i)\| \leq \|a_i \| \| U_{\textbf{1}-e_{\lambda}} (a_i)\|,
 \end{aligned}$$ which combined with \eqref{eq first estimation of the norm of varphi minus varphi Ulambda} gives $$\left|\left(\varphi -  \varphi U_{e_{\lambda}} \right) (x) \right| \leq\delta^{-1} \sum_{i=1}^m \|a_i \|^{\frac12} \| U_{\textbf{1}-e_{\lambda}} (a_i)\|^{\frac12},$$ for all $x\in M(\mathfrak{A})_{sa}$ with $\|x\|\leq 1$ and $\varphi\in \mathcal{F}$. Clearly, each  $\| U_{\textbf{1}-e_{\lambda}} (a_i)\|$ tends to zero for all $i=1, \ldots, m$, and hence $\displaystyle \lim_{\lambda} \| \varphi -  \varphi U_{e_{\lambda}}\| =0$, uniformly on $\varphi \in \mathcal{F}$. 
\end{proof}

It is an open question to determine whether the conclusion in the previous proposition is actually a characterization of equicontinuity for families of J-strict continuous functionals.   

\section{An application on the extension of surjective Jordan $^*$-homomorphisms}\label{sec 4: surjective extensions} 

This final section is devoted to establish a first application of the J-strict topology in a result guaranteeing when a surjective Jordan $^*$-homomorphism between two JB$^*$-algebras admits an extension to a  surjective Jordan $^*$-homomorphism between the multipliers algebras. 
In the setting of C$^*$-algebras G.K. Pedersen was the first one observing that every surjective $^*$-homomorphism between $\sigma$-unital C$^*$-algebras extends to a surjective $^*$-homomorphism between their corresponding multipliers algebras (see \cite[Theorem 10]{PedSAW} or \cite[Proposition 3.12.10]{Ped}). The hypothesis affirming that $A$ and $B$ are $\sigma$-unital cannot be relaxed (see \cite[3.12.11]{Ped} and \cite[Proposition 6.8]{LanceBook} for further generalizations). An appropriate version of Pedersen's result for JB$^*$-algebras is the main goal of this final section.  \smallskip

We shall require some additional tools. The first one is an intermediate value type theorem for Jordan $^*$-epimorphisms between JB$^*$-algebras. The result, which generalizes \cite[Proposition 1.5.10]{Ped} and \cite[Exercise 4.6.21]{KadRingrBook1}, is interesting by itself as a potential independent tool.

\begin{theorem}\label{t intermediate value theorem for positive jordan star hom}
	Let $\Phi:\mathfrak{A}\to \mathfrak{B}$ be a Jordan $^*$-epimorphism between JB$^*$-algebras. Suppose that $0\leq b \leq d$ in $\mathfrak{B}$ and $\Phi(c)=d$ for some $c\geq 0$. Then there exists $a\in \mathfrak{A}$ such that $0\leq a\leq c$ and $b=\Phi(a)$.
\end{theorem}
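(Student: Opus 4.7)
My plan is to adapt Pedersen's argument for C$^*$-algebras \cite[Proposition 1.5.10]{Ped}, with the $U$-operator playing the role of two-sided multiplication.

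First, I would make standard reductions. By rescaling $c$ we may assume $\|c\|\le 1$, so that $\|b\|\le\|d\|\le 1$. Next, I would secure a positive lift $a_0\in\mathfrak{A}_+$ of $b$ with $\|a_0\|\le 1$: starting from any lift of $b$, pass to its self-adjoint part and then to its positive part in the commutative JB$^*$-subalgebra it generates; both manoeuvres are preserved by $\Phi$ because Jordan $^*$-homomorphisms intertwine the continuous functional calculus on self-adjoint elements, and a final truncation $t\mapsto\min(t,1)$ gives the norm bound.

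The core step is carried out inside the unitization $\widetilde{\mathfrak{A}}=\mathfrak{A}+\mathbb{C}\textbf{1}\subseteq M(\mathfrak{A})$. For each $\epsilon>0$, the element $c_\epsilon:=c+\epsilon\textbf{1}$ is invertible in $\widetilde{\mathfrak{A}}$, with $c_\epsilon^{\pm 1/2}\in\widetilde{\mathfrak{A}}$ defined via continuous functional calculus. I would define successively
\[
k_\epsilon := U_{c_\epsilon^{-1/2}}(a_0),\qquad h_\epsilon := f(k_\epsilon)\ \text{with}\ f(t)=\min(t,1),\qquad a_\epsilon:=U_{c_\epsilon^{1/2}}(h_\epsilon).
\]
Since $a_0\in\mathfrak{A}$ and $\mathfrak{A}$ is a Jordan ideal of $\widetilde{\mathfrak{A}}$, the element $k_\epsilon$ lies in $\mathfrak{A}_+$; because $f(0)=0$, so does $h_\epsilon$, with $0\le h_\epsilon\le\textbf{1}$; and by the same ideal reasoning $a_\epsilon\in\mathfrak{A}_+$. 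Applying $\Phi$, which intertwines $U$-operators and continuous functional calculus on self-adjoint elements, yields $\Phi(k_\epsilon)=U_{d_\epsilon^{-1/2}}(b)$ (with $d_\epsilon:=d+\epsilon\textbf{1}$); the hypothesis $b\le d\le d_\epsilon$ combined with the positivity of $U_{d_\epsilon^{-1/2}}$ forces $\Phi(k_\epsilon)\le U_{d_\epsilon^{-1/2}}(d_\epsilon)=\textbf{1}$, so the truncation is inert after applying $\Phi$: $\Phi(h_\epsilon)=\Phi(k_\epsilon)$, whence
\[
\Phi(a_\epsilon)=U_{d_\epsilon^{1/2}}\bigl(U_{d_\epsilon^{-1/2}}(b)\bigr)=b
\]
exactly. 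Moreover, $a_\epsilon\le U_{c_\epsilon^{1/2}}(\textbf{1})=c_\epsilon=c+\epsilon\textbf{1}$.

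Finally, letting $\epsilon\to 0^+$, I would show that the net $(a_\epsilon)_{\epsilon>0}$ is norm-Cauchy in $\mathfrak{A}$, with limit $a$ satisfying $0\le a\le c$ (using that the order interval $[0,c]$ is norm-closed and that $a_\epsilon-\epsilon\textbf{1}\le c$ for every $\epsilon$) and $\Phi(a)=b$ (by norm-continuity of $\Phi$). The main obstacle is precisely this norm-convergence analysis. My strategy is to invoke the Shirshov--Cohn theorem to embed the JB$^*$-subalgebra generated by $\{a_0,c\}$ into a C$^*$-algebra, carry out the computations with two-sided multiplication, and decompose the behaviour of $a_\epsilon$ along the range projection $r(c)\in\mathfrak{A}^{**}$: the Peirce-$2$ part stabilises from the spectral structure of $c$, while the complementary part is dominated by $\epsilon(\textbf{1}-r(c))\to 0$, giving the required norm convergence.
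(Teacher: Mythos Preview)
Your reductions and the identities $\Phi(a_\epsilon)=b$ and $0\le a_\epsilon\le c+\epsilon\textbf{1}$ are correct, and the idea of passing to a C$^*$-algebra via Shirshov--Cohn on the two hermitian generators $a_0,c$ is exactly what the paper does. The gap is in the last step: the net $(a_\epsilon)_{\epsilon>0}$ need not be norm-Cauchy, and your outline of the convergence (splitting along $r(c)$, claiming the Peirce-$2$ part ``stabilises'') already fails in simple examples. Take $\mathfrak{A}=C([0,1],M_2)$, $\mathfrak{B}=M_2$, $\Phi(f)=f(0)$, let $c$ be the constant projection $p=\left(\begin{smallmatrix}1&0\\0&0\end{smallmatrix}\right)$, set $b=d=p$, and choose as positive norm-one lift $a_0(t)$ the rank-one projection onto $(\cos t,\sin t)$. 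A direct computation gives
\[
a_\epsilon(t)=\min\bigl(1,\|w(t)\|^{-2}\bigr)\,a_0(t),\qquad \|w(t)\|^2=\frac{\cos^2 t}{1+\epsilon}+\frac{\sin^2 t}{\epsilon}.
\]
For each fixed $t>0$ one has $a_\epsilon(t)\to 0$, whereas $\|a_\epsilon(\sqrt{\epsilon})\|\to \tfrac12$; hence $(a_\epsilon)$ has no uniform limit. In particular the Peirce-$2$ corner (here the $(1,1)$-entry) does not stabilise at all.

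The obstruction is that the truncation $f(t)=\min(t,1)$ interacts badly with the blow-up of $c_\epsilon^{-1/2}$ off the range of $c$ whenever $a_0$ couples non-trivially to that complement. The paper's proof (following Pedersen more literally) avoids truncation by absorbing the overhang of $a_0$ beyond $c$ into the regulariser: with $x:=(a_0-c)_+$ one has $\Phi(x)=0$ and $a_0\le x+c$, so the sequence $y_n:=U_{c^{1/2}}U_{(n^{-1}\textbf{1}+x+c)^{-1/2}}(a_0)$ automatically satisfies $0\le y_n\le c$ and $\Phi(y_n)=U_{d^{1/2}}U_{(n^{-1}\textbf{1}+d)^{-1/2}}(b)\to b$. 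With no non-smooth functional calculus in sight, the Cauchy estimate for $(y_n)$ then goes through inside the ambient C$^*$-algebra exactly by Pedersen's computation.
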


\begin{proof} Up to considering the canonical unital extension of $\Phi$, we may assume that $\mathfrak{A}$ and $\mathfrak{B}$ are unital JB$^*$-algebras and $\Phi$ is unital. Since $\Phi$ is linear, surjective and symmetric, we can find $z\in \mathfrak{A}_{sa}$ such that $b=\Phi(z)$. By functional calculus, we can write $z$ uniquely in the form $z=z_{+}-z_{-}$ with $z_{+},z_{-}\geq 0$ and $z_{+} \perp z_{-} $. Since $\Phi$ is positive and preserves orthogonality we deduce from $b=\Phi (z_{+})-\Phi(z_{-})$ and $\Phi (z_{+})\perp \Phi(z_{-})$ that $\Phi(z_{-})=0$. Thus, by replacing $z$ with $z_{+},$ if necessary, we can assume that $z$ is positive in $\mathfrak{A}$.\smallskip
	
Let us write $(z-c)$ as the difference of two orthogonal positive elements $( z-c)_{+}$ and $( z-c)_{-}$ and set $x=( z-c)_{+}.$ Since $\Phi$ preserves functional calculus, we have $$\Phi(x)=(\Phi (z-c))_{+}=(b-d)_{+}=0.$$
	
Moreover, from $z-c=(z-c)_{+}-(z-c)_{-}\leq (z-c)_{+}$ we deduce that $z\leq x+c$. For each natural $n$ define $y_n:=U_{c^{\frac{1}{2}}} U_{(\frac{1}{n}\textbf{1}+x+c)^{-\frac{1}{2}}} (z)$. It is clear that $(y_n)\subseteq \mathfrak{A}_{+}$ --we note that in case we consider the unitization of $\mathfrak{A}$, because $\mathfrak{A}$ is not unital, the element $y_n$ lies in $\mathfrak{A}$ for all $n$.  We shall show that the sequence $(y_n)_n$ is convergent. Let $\mathfrak{C}$ denote the JB$^*$-subalgebra of $\mathfrak{A}$ generated by $\textbf{1}, z$ and $c$ (where $\textbf{1}=\textbf{1}_{\mathfrak{A}^{**}}$). Observe that the whole JB$^*$-subalgebra of $\mathfrak{A}$ generated by $\textbf{1}$ and the hermitian element $z-c$ (which is isometrically isomorphic to a commutative unital C$^*$-algebra) is contained in $\mathfrak{C}$, and consequently, by functional calculus, $x=( z-c)_{+}\in \mathfrak{C}$. Therefore the elements  $\textbf{1}, c, z, x, x+c,\frac{1}{n}\textbf{1}+x+c$ all lie in $\mathfrak{C}$. By the Shirshov-Cohn theorem (see \cite[2.4.14 and 2.4.15]{HOS} or \cite[Corollary 2.2]{Wright77}) $\mathfrak{C}$ is a JC$^*$-algebra, that is, $\mathfrak{C}$ is a JB$^*$-subalgebra of some $B(H)$. Thus for $u,v\in \mathfrak{C}$ its Jordan product in $\mathfrak{C}$ (and hence in $\mathfrak{A}$) is induced by the associative product of $B(H)$, that is  $u\circ v=\frac{1}{2}(u\cdot v+v\cdot u)$ where $u\cdot v$ denotes the (associative) product in $B(H)$. The involution is the same in all the structures considered here. Working in $B(H)$ we can apply an argument in \cite[Lemma 1.4.4]{Ped} or \cite[Exercise 4.6.21]{KadRingrBook1}, it is included here for completeness. We set $u_n=c^{\frac{1}{2}}\cdot  (\frac{1}{n}\textbf{1} +x+c)^{-\frac{1}{2}} \cdot z^{\frac{1}{2}} \in B(H)$ and $d_{nm}= (\frac{1}{n}\textbf{1}+x+c)^{-\frac{1}{2}}- (\frac{1}{m}\textbf{1}+x+c)^{-\frac{1}{2}}$. By observing that $c\leq x+c$ and $z\leq x+c$, the positivity of the involved $U$-operators, and that the elements $(x+c),$ $(\frac{1}{n}\textbf{1}+x+c)$, $(x+c)^{\frac12},$ $(\frac{1}{n}\textbf{1}+x+c)^{-\frac{1}{2}}$ and $d_{nm}$ all commute in $B(H)$ we have 
\begin{equation}\label{eq norm in B(H) with Pedersen method} \begin{aligned}\|u_n-u_m\|^2 &= \| c^{\frac{1}{2}} \cdot d_{nm} \cdot z^{\frac{1}{2}}\|^2 =\|c^{\frac{1}{2}}\cdot d_{nm}\cdot z\cdot d_{nm} \cdot c^{\frac{1}{2}} \| \\ &\leq \|c^{\frac{1}{2}} \cdot d_{nm} \cdot (x+c) \cdot d_{nm} \cdot c^{\frac{1}{2}} \| = \|(x+c)^{\frac12} \cdot d_{nm} \cdot c^{\frac12} \|^2 \\
&=\|(x+c)^{\frac12}\cdot d_{nm}\cdot c \cdot d_{nm} (x+c)^{\frac12} \| \\
&\leq \|(x+c)^{\frac12} \cdot d_{nm} \cdot (x+c)  \cdot d_{nm} \cdot (x+c)^{\frac12} \| \\
&=\| (x+c)^{2} \cdot d_{nm}^2\| = \| (x+c) \cdot d_{nm} \|^2 \longrightarrow_{n,m\to \infty} 0,
	\end{aligned} 
\end{equation}  where to compute the last norm we work in the abelian unital C$^*$-algebra generated by the positive element $x+c$ and the unit of $B(H)$ identified with $C(\sigma(x+c))$ in such  away that $x+c$ corresponds to the embedding of $\sigma(x+c)$ into $\mathbb{C}$. In the unital and commutative C$^*$-algebra $C([0,M])$ ($M\in \mathbb{R}^+$), the sequence $\left( \frac{t}{\sqrt{\frac1n +t}} \right)_n$ is pointwise increasing to $\sqrt{t}$, it follows from Dini's theorem that the convergence is uniform, and hence $\displaystyle \lim_{n,m\to \infty} \left\| \frac{t}{\sqrt{\frac1n +t}} -\frac{t}{\sqrt{\frac1m +t}}\right\|_{\infty}= 0,$ which proves the limit employed in the last line of \eqref{eq norm in B(H) with Pedersen method}.\smallskip

Therefore the sequence $(u_n)_n$ is norm convergent in $B(H)$ --however, it does not belong to $\mathfrak{C}$--, and thus $$\begin{aligned}
(u_n \cdot u_n^*)_n &= \left(c^{\frac{1}{2}}\cdot \left(\frac{1}{n}\textbf{1} +x+c\right)^{-\frac{1}{2}}\cdot z \cdot  \left(\frac{1}{n}\textbf{1} +x+c\right)^{-\frac{1}{2}}\cdot c^{\frac{1}{2}}\right)_n \\
	&= \left(U_{c^{\frac{1}{2}}} U_{(\frac{1}{n}\textbf{1}+x+c)^{-\frac{1}{2}}} (z) \right)_n  = (y_n)_n \subset \mathfrak{C}\subset \mathfrak{A}
\end{aligned}$$ converges in norm too. Furthermore, $$y_n = U_{c^{\frac{1}{2}}} U_{(\frac{1}{n}\textbf{1}+x+c)^{-\frac{1}{2}}} (z) \leq U_{c^{\frac{1}{2}}} U_{(\frac{1}{n}\textbf{1}+x+c)^{-\frac{1}{2}}} (x+c)\leq U_{c^{\frac{1}{2}}} (\textbf{1}) = c. $$\smallskip

Let $a\in \mathfrak{A}$ denote the limit of the sequence $(y_n)_n$ --if the original JB$^*$-algebra $\mathfrak{A}$ were not unital, the elements $y_n$ and $a$ all lie in $\mathfrak{A}$. Since $0\leq y_n\leq c$ for all $n$, it follows that $a\leq c$. Now by applying that $\Phi$ is a unital Jordan $^*$-homomorphism with $\Phi(x)=0,$ $\Phi(z)=b$ and $\Phi(c)=d$ we have 
\begin{equation}\label{eq Phi yn}
\begin{aligned}
	\Phi (y_n) & = \Phi \left( U_{c^{\frac{1}{2}}} U_{(\frac{1}{n}\textbf{1}+x+c)^{-\frac{1}{2}}} (z) \right) = U_{\Phi(c)^{\frac{1}{2}}} U_{(\frac{1}{n}\textbf{1}+\Phi(x) + \Phi(c))^{-\frac{1}{2}}} (\Phi(z)) \\
	&= U_{d^{\frac{1}{2}}} U_{(\frac{1}{n}\textbf{1}+  d)^{-\frac{1}{2}}} (b) = U_{d^{\frac{1}{2}}\circ (\frac{1}{n}\textbf{1}+  d)^{-\frac{1}{2}}} (b),
\end{aligned} 
\end{equation}   where in the last equality we applied that $d$ and $\frac{1}{n}\textbf{1}+  d$ (and hence $d^{\frac{1}{2}}$ and $(\frac{1}{n}\textbf{1}+  d)^{-\frac{1}{2}}$) operator commute. It is well known that the sequence $\left(d^{\frac{1}{2}}\circ (\frac{1}{n}\textbf{1}+  d)^{-\frac{1}{2}}\right)_n = \left(\left(d\circ (\frac{1}{n}\textbf{1}+  d)^{-1}\right)^{\frac12}\right)_n$ converges in the weak$^*$ topology and in the strong$^*$ topology of $\mathfrak{B}^{**}$ to the range projection $r(d)$ of $d$. Since the product of $\mathfrak{B}^{**}$ is jointly strong$^*$-continuous on bounded sets, we deduce that $\left(  U_{d^{\frac{1}{2}}\circ (\frac{1}{n}\textbf{1}+  d)^{-\frac{1}{2}}} (b) \right)_n \to U_{r(d)} (b)$ in the strong$^*$ topology of $\mathfrak{B}^{**}.$ Moreover, since $b\leq d\leq r(d)$ we also have $U_{r(d)} (b) =b$. On the other hand, since $(\Phi(y_n))_n\to \Phi(a)$ and the norm topology is stronger than the strong$^*$ topology, we deduce from \eqref{eq Phi yn} and the above conclusions that $\Phi(a) = b$.  
\end{proof}

As in the case of C$^*$-algebras, a JB$^*$-algebra is called \emph{$\sigma$-unital} if it admits a countable approximate unit. A standard argument \cite[Proposition 3.10.5]{Ped}, also valid for JB$^*$-algebras, shows that a JB$^*$-algebra $\mathfrak{A}$ is $\sigma$-unital if and 
only if it contains a strictly positive element $h\in \mathfrak{A}$ (i.e. $\phi (h) >0$ for every non-zero positive functional $ \phi\in \mathfrak{A}^*,$ or equivalently, the range projection of $h$ in $\mathfrak{A}^{**}$ is the unit).\smallskip

The next lemma is a Jordan version of \cite[Lemma 6.1]{LanceBook} and proves a new characterization of strictly positive elements. 

\begin{lemma}\label{l the U map of a strictly positive element has norm dense image} Let $h$ be a  positive element in a JB$^*$-algebra $\mathfrak{A}$, and let $\mathfrak{A}^+$ denote the set of all positive elements in $\mathfrak{A}$. Then the following statements are equivalent: \begin{enumerate}[$(a)$]
		\item $h$ is strictly positive.
		\item $U_{h} (\mathfrak{A})$ is norm dense in $\mathfrak{A}$, that is, the inner ideal generated by $h$ is the whole $\mathfrak{A}$.
		\item $U_{h} (\mathfrak{A}_{sa})$ is norm dense in $\mathfrak{A}_{sa}.$
		\item $U_{h} (\mathfrak{A}^+)$ is norm dense in $\mathfrak{A}^+.$ 
	\end{enumerate} 
\end{lemma}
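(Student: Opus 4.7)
The plan is to close the circle $(a) \Rightarrow (b) \Rightarrow (c) \Rightarrow (a)$ and separately establish $(a) \Rightarrow (d) \Rightarrow (c)$. Two inputs will do most of the work: the identification $\mathfrak{A}(h)^{**} = \mathfrak{A}^{**}_{2}(r(h))$ recorded just before the lemma, and a functional-calculus construction of an approximate unit out of $h$.

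For $(a) \Leftrightarrow (b)$, I invoke the description of the inner ideal generated by $h$: since $h = h^*$ one has $\{h,\mathfrak{A},h\} = U_h(\mathfrak{A})$, so $\mathfrak{A}(h) = \overline{U_h(\mathfrak{A})}^{\|\cdot\|}$ is the norm-closed inner ideal of $\mathfrak{A}$ generated by $h$, and $\mathfrak{A}(h)^{**} = \mathfrak{A}^{**}_{2}(r(h))$, where $r(h)$ is the range projection of $h$ in $\mathfrak{A}^{**}$. Hence $U_h(\mathfrak{A})$ is norm-dense in $\mathfrak{A}$ iff $\mathfrak{A}(h)^{**} = \mathfrak{A}^{**}$ iff $r(h) = \textbf{1}$, and the latter is standardly equivalent to $h$ being strictly positive (for positive $\varphi\in\mathfrak{A}^{*}$ one has $\varphi(h)=0 \Leftrightarrow \varphi(r(h))=0$). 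The equivalence $(b) \Leftrightarrow (c)$ follows from $U_h$ commuting with the involution when $h^*=h$, which gives $U_h(\mathfrak{A}) = U_h(\mathfrak{A}_{sa}) \oplus i\,U_h(\mathfrak{A}_{sa})$, combined with $\mathfrak{A} = \mathfrak{A}_{sa} \oplus i\mathfrak{A}_{sa}$. The implication $(d) \Rightarrow (c)$ is routine: decompose any self-adjoint element as $y = y^+ - y^-$ with $y^{\pm}\in\mathfrak{A}^+$ via continuous functional calculus, approximate each summand from $U_h(\mathfrak{A}^+)$, and take differences.

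The crux is $(a) \Rightarrow (d)$. Assume $h$ is strictly positive and define, via continuous functional calculus in $\mathfrak{A}^{**}$, the element $e_n := h \circ (\tfrac{1}{n}\textbf{1}+h)^{-1}$; because $t\mapsto t/(\tfrac{1}{n}+t)$ vanishes at $0$, we have $e_n \in \mathfrak{A}$. Since $r(h) = \textbf{1}$, the sequence $(e_n)$ is an increasing approximate unit for $\mathfrak{A}$, and in particular $\|e_n\circ a -a\| \to 0$ and $\|e_n^2\circ a -a\| \to 0$ for every $a \in \mathfrak{A}$ (as in \cite[Lemma 4]{Ed80} and the subsequent geometric inequality already used in the proof of Proposition~\ref{p strict density in the multipliers}). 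Fix $x \in \mathfrak{A}^+$ and set $b_n := U_{(\tfrac{1}{n}\textbf{1}+h)^{-1}}(x)$. Writing $(\tfrac{1}{n}\textbf{1}+h)^{-1} = n\textbf{1} - n e_n \in \mathbb{C}\textbf{1}+\mathfrak{A}$ and expanding $U_a(x) = 2(a\circ x)\circ a - a^2\circ x$ shows $b_n \in \mathfrak{A}$, while positivity of $U_a$ for positive $a$ yields $b_n \in \mathfrak{A}^+$.

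To identify $U_h(b_n)$, I apply the Shirshov--Cohn theorem to the JB$^*$-subalgebra $\mathfrak{C}$ generated by $h$ and $x$ (together with the unit of $\mathfrak{A}^{**}$), which is a JC$^*$-algebra, and fix an isometric embedding $\mathfrak{C}\hookrightarrow B(H)$. Inside $B(H)$ the self-adjoint operators $h$ and $(\tfrac{1}{n}\textbf{1}+h)^{-1}$ commute associatively, and a direct calculation yields
$$U_h(b_n) = h\,(\tfrac{1}{n}\textbf{1}+h)^{-1}\, x\, (\tfrac{1}{n}\textbf{1}+h)^{-1}\, h = e_n\, x\, e_n = U_{e_n}(x).$$
Expanding $U_{e_n}(x) = 2(e_n\circ x)\circ e_n - e_n^2 \circ x$ and using the approximate-unit properties of $(e_n)$ together with continuity of the Jordan product, one gets $U_{e_n}(x) \to 2x - x = x$ in norm; hence $U_h(b_n)\to x$ with $b_n\in\mathfrak{A}^+$, which gives $(d)$. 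The delicate point is the Jordan identity $U_h\, U_k = U_{h\circ k}$ for an operator commuting pair, which is precisely where the Shirshov--Cohn detour into $B(H)$ is needed; modulo that reduction every step is either a standard range-projection fact, a continuity argument for continuous functional calculus, or a routine computation with the explicit formula for $U_a$.
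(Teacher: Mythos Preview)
Your argument is correct. The equivalences $(a)\Leftrightarrow(b)\Leftrightarrow(c)$ and the implication $(d)\Rightarrow(c)$ match the paper's proof essentially verbatim: the paper also identifies $\mathfrak{A}(h)=\mathfrak{A}_2^{**}(r(h))\cap\mathfrak{A}$ via \cite{BunChuZal2000}, reduces strict positivity to $r(h)=\textbf{1}$, and disposes of $(b)\Leftrightarrow(c)$ and $(d)\Rightarrow(c)$ with the decomposition $\mathfrak{A}=\mathfrak{A}_{sa}\oplus i\mathfrak{A}_{sa}$ and the splitting $y=y^+-y^-$.

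Where you genuinely diverge is in reaching $(d)$. The paper closes the cycle by proving $(d)\Rightarrow(a)$ directly (if $\phi(h)=0$ for a positive $\phi$, then $\phi U_h(a)\leq\|a\|\phi(h^2)=0$, so density of $U_h(\mathfrak{A}^+)$ forces $\phi=0$) and then dismisses the passage from $(b)$ or $(c)$ down to $(d)$ as one of the ``remaining implications'' --- which, strictly speaking, still needs the fundamental identity $U_{U_h(a)}=U_h U_a U_h$ to write $(U_h(a_n))^2=U_h(U_{a_n}(h^2))\in U_h(\mathfrak{A}^+)$. You instead give a direct constructive proof of $(a)\Rightarrow(d)$: build the approximate unit $e_n=h\circ(\tfrac1n\textbf{1}+h)^{-1}$, set $b_n=U_{(\frac1n\textbf{1}+h)^{-1}}(x)\in\mathfrak{A}^+$, and use Shirshov--Cohn on the subalgebra generated by $\textbf{1},h,x$ to identify $U_h(b_n)=U_{e_n}(x)\to x$. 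This is longer but has the advantage of producing an explicit approximating sequence in $U_h(\mathfrak{A}^+)$, which is exactly the form needed later in the proof of Theorem~\ref{t extension of onto Jordan starhom sigma unital}; the paper's route is shorter but non-constructive.
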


\begin{proof} The identity $U_{U_h(a)} (b) = U_h U_a U_h (b)$ shows that $\mathfrak{A}(h) = \overline{U_h (\mathfrak{A})}$ is a norm-closed quadratic ideal of $\mathfrak{A}$. \smallskip
	
$(a)\Rightarrow (b)$ If  $h$ is strictly positive, every positive functional in $\mathfrak{A}^*$ vanishing on $\mathfrak{A}(h)$ also vanishes on $\mathfrak{A}$. We claim that $r_{{\mathfrak{A}^{**}}}(h) = \textbf{1}$. Otherwise, $\textbf{1}- r_{{\mathfrak{A}^{**}}}(h)$ is a non-zero projection, and hence there exists a positive norm-one functional $\phi \in \mathfrak{A}^*$ such that $\phi (\textbf{1}- r_{{\mathfrak{A}^{**}}}(h)) =1$ (cf. \cite[Theorem or Proposition]{Bun01}). In particular, $0\leq \phi (h) \leq \phi (r_{{\mathfrak{A}^{**}}}(h)) =0$, contradicting that $h$ is strictly positive. Finally, by \cite[Proposition 2.1]{BunChuZal2000} we have  $\mathfrak{A}(h) = \mathfrak{A}^{**}_2 (r_{{\mathfrak{A}^{**}}}(h)) \cap \mathfrak{A} = \mathfrak{A}^{**}_2  (\textbf{1}) \cap \mathfrak{A} = \mathfrak{A}.$\smallskip
	
$(d)\Rightarrow (a)$ Suppose now that $\mathfrak{A}(h) =  \mathfrak{A}.$ In this case, for each positive functional $\phi\in \mathfrak{A}^*$ with $\phi(h) =0,$ and each positive element $a\in \mathfrak{A}$ we have $0\leq \phi U_h (a) \leq \|a\| \phi (h^2) =0$, and hence $\phi U_h (\mathfrak{A}) =\{0\}$, witnessing that $\phi =0$.\smallskip

The remaining implications are clear from the identity $\mathfrak{A}= \mathfrak{A}_{sa}\oplus i \mathfrak{A}_{sa}$, the fact that $U_h$ is a positive mapping, and the decomposition of every positive element as the difference of two positive elements. 
\end{proof}

By \cite[Lemma 1.33$(i)$]{AlfsenShultz2003} if $J$ is a Jordan ideal of a JB$^*$-algebra $\mathfrak{A}$ and $(u_{\lambda})$ is an approximate identity of $J$ we have $$\|a+J_{sa}\|=\lim_{\lambda} \|a-u_{\lambda}\|=\lim_{\lambda}\|U_{1-u_{\lambda}}(a)\|,$$ for every $a\in \mathfrak{A}_{sa}$. Given $a\in \mathfrak{A}_{sa},$ as shown in the proof of \cite[Theorem 3.2]{Wright77} $\|a+J_{sa}\|=\|a+J\|$ whence $$\|a+J\|=\displaystyle  \lim_{\lambda} \|a-u_{\lambda}\|=\lim_{\lambda}\|U_{1-u_{\lambda}}(a)\|.$$
 The next technical lemma proves a similar identity for elements of the form $\{a,b,a\}$ with $a,b\in \mathfrak{A}_{sa}$ and $b\geq 0$.

\begin{lemma}\label{l norm element ideal}
Let $J$ be a Jordan $^*$-ideal of a JB$^*$-algebra $\mathfrak{A}.$ Let $(u_{\lambda})$ be an approximate unit for $J$. Then the equality
	$$ \|U_a (b) +J\|=\lim_{\lambda} \left\| U_a U_{b^{\frac12}}(\textbf{1}-u_{\lambda})\right\|$$ holds for all $a,b\in \mathfrak{A}_{sa}$ with $b$ positive.	
\end{lemma}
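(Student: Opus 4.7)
I would split the proof into the two inequalities separately.

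The lower bound follows directly from the canonical quotient Jordan $^*$-epimorphism $\pi\colon\widetilde{\mathfrak{A}}\to\widetilde{\mathfrak{A}}/J$: since $u_\lambda\in J$ one has $\pi(\textbf{1}-u_\lambda)=\textbf{1}$, and hence
$$\pi\bigl(U_aU_{b^{1/2}}(\textbf{1}-u_\lambda)\bigr)=U_{\pi(a)}U_{\pi(b^{1/2})}(\textbf{1})=U_{\pi(a)}(\pi(b))=\pi(U_a(b)),$$
so $\|U_aU_{b^{1/2}}(\textbf{1}-u_\lambda)\|\geq\|U_a(b)+J\|$ for every $\lambda$.

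For the upper bound I introduce $y:=U_{b^{1/2}}(a^2)\in\mathfrak{A}^{+}$. The JB$^*$-subalgebra of $\widetilde{\mathfrak{A}}$ generated by the two self-adjoint elements $a$ and $b^{1/2}$ is a JC$^*$-algebra by the Shirshov--Cohn theorem, hence embeds isometrically in some $B(H)$; the $C^*$-identity $\|X^*X\|=\|XX^*\|$ applied to $X=ab^{1/2}$ then yields $\|U_a(b)\|=\|y\|$. Running the same argument inside $\mathfrak{A}/J$ with $\pi(a),\pi(b^{1/2})$ gives $\|U_a(b)+J\|=\|y+J\|$.

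The key device is a change of approximate identity. In $\mathfrak{A}^{**}$ set $v_\lambda:=\textbf{1}-(\textbf{1}-u_\lambda)^{1/2}=g(u_\lambda)$ with $g(t)=1-(1-t)^{1/2}$, which satisfies $g(0)=0$ and $0\leq g\leq 1$ on $[0,1]$. Then $v_\lambda\in J$, the net $(v_\lambda)$ is increasing in the closed unit ball, and $v_\lambda\nearrow p$ weakly$^*$, where $p$ is the unit of $J^{**}$. To see that $(v_\lambda)$ is itself an approximate identity of $J$, fix $j\in J^{+}$ and combine the standard JB$^*$-inequality $\|c\circ d\|^2\leq\|c\|\,\|U_d(c)\|$ from \cite[Lemma 3.5.2$(ii)$]{HOS} (with $c=j$, $d=(\textbf{1}-u_\lambda)^{1/2}$) with the two-generator Shirshov--Cohn norm identity $\|U_{(\textbf{1}-u_\lambda)^{1/2}}(j)\|=\|U_{j^{1/2}}(\textbf{1}-u_\lambda)\|$; the latter tends to $0$ since $j^{1/2}\in J$ and so $U_{j^{1/2}}(u_\lambda)\to U_{j^{1/2}}(\textbf{1})=j$ in norm.

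The proof concludes with the key norm identity
$$\|U_aU_{b^{1/2}}(\textbf{1}-u_\lambda)\|=\|U_{\textbf{1}-v_\lambda}(y)\|\qquad(\star)$$
(recalling $\textbf{1}-v_\lambda=(\textbf{1}-u_\lambda)^{1/2}$), which in the JC$^*$-setting is just $\|X^*X\|=\|XX^*\|$ applied to $X=ab^{1/2}(\textbf{1}-u_\lambda)^{1/2}$. Combining $(\star)$ with the Alfsen--Shultz formula $\|c+J\|=\lim_\lambda\|U_{\textbf{1}-v_\lambda}(c)\|$ applied to the self-adjoint $c=y$ and the approximate identity $(v_\lambda)$ gives $\lim_\lambda\|U_aU_{b^{1/2}}(\textbf{1}-u_\lambda)\|=\|y+J\|=\|U_a(b)+J\|$. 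The main obstacle is justifying $(\star)$ in full generality: the three elements $a$, $b^{1/2}$ and $(\textbf{1}-u_\lambda)^{1/2}$ fall outside the scope of a single two-generator Shirshov--Cohn reduction, so one must either invoke the canonical splitting of $\mathfrak{A}^{**}$ into its JW$^*$ and purely exceptional summands (treating the latter by spectral calculus on the Albert fibres) or produce a direct Jordan triple argument.
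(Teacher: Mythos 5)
Your lower bound is fine, and the auxiliary steps are sound: the two--generator Shirshov--Cohn reductions giving $\|U_a(b)\|=\|U_{b^{1/2}}(a^2)\|$ (and its quotient version), and the verification that $v_\lambda=\textbf{1}-(\textbf{1}-u_\lambda)^{1/2}$ is again an increasing approximate identity of $J$, all hold. But the proof does not close. Everything rests on the identity $(\star)$, $\|U_aU_{b^{1/2}}(\textbf{1}-u_\lambda)\|=\|U_{(\textbf{1}-u_\lambda)^{1/2}}U_{b^{1/2}}(a^2)\|$, which involves the three generators $a$, $b^{1/2}$ and $(\textbf{1}-u_\lambda)^{1/2}$ and is therefore outside the reach of the Shirshov--Cohn/Macdonald machinery; it is precisely the Jordan analogue of $\|XX^*\|=\|X^*X\|$ for a product of three elements. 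You acknowledge this yourself and only gesture at two possible remedies (splitting $\mathfrak{A}^{**}$ into special and exceptional summands, or an unspecified ``direct Jordan triple argument'') without carrying either out; in the exceptional summands the needed norm identity is exactly what would have to be proved, and single-variable spectral calculus does not supply it. As it stands the upper estimate, and hence the asserted equality, is not established.

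For comparison, the paper's proof sidesteps the three-generator problem entirely. By Edwards' ideal theory there is a \emph{central} projection $p\in\mathfrak{A}^{**}$, the unit of $J^{**}$, with $(u_\lambda)\to p$ in the strong$^*$ topology. Since $U_a$ and $U_{b^{1/2}}$ are positive, the net $\left(U_aU_{b^{1/2}}(\textbf{1}-u_\lambda)\right)$ is decreasing with strong limit $U_aU_{b^{1/2}}(\textbf{1}-p)$, and Dini's theorem upgrades this to norm convergence. Centrality of $\textbf{1}-p$ then yields $U_aU_{b^{1/2}}(\textbf{1}-p)=(\textbf{1}-p)\circ U_a(b)=\pi^{**}(U_a(b))$, whose norm is $\|U_a(b)+J_{sa}\|$ because $\pi^{**}$ is the bitranspose of the quotient map. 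If you want to rescue your route, you would in effect need this same central projection to justify $(\star)$ in the limit, at which point you have reconstructed the paper's argument.
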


\begin{proof} Up to replacing $\mathfrak{A}$ with its unitization we can always assume that $\mathfrak{A}$ is unital.  As we have seen in the comments above, it is enough to show that $\displaystyle \|U_a (b)+J_{sa}\|=\lim_{\lambda}\| U_a U_{b^{\frac12}}(\textbf{1}-u_{\lambda}) \|$. By \cite[Lemma 1.33]{AlfsenShultz2003} we have 
	$$\|a+ J_{sa}\|=\lim_{\lambda} \| a-a\circ u_{\lambda}\|=\lim_{\lambda} \|U_{1-u_{\lambda}} (a) \|$$ for all $a\in \mathfrak{A}_{sa}$. By the study of Jordan ideals of JB-algebras developed by M. Edwards in \cite{Edw77}, there exists a projection $p$ in $\mathfrak{A}^{**}$ such that  $(u_{\lambda})\to p$ in the strong$^*$ topology of $\mathfrak{A}^{**}$ --$p$ is actually the unit of $J^{**}$. Moreover, $p$ is a central projection in $\mathfrak{A}^{**}$ and
	$$J_{sa}=U_p(\mathfrak{A}_{sa}^{**})\cap \mathfrak{A}_{sa}, \; J_{sa}^{\circ \circ}=\overline{J_{sa}}^{w^*}=U_p(\mathfrak{A}_{sa}^{**})$$ (cf. \cite[Theorems 2.3 and 3.3]{Edw77}).  We also have $a-a\circ u_{\lambda}\to a-a\circ p$ and $U_{1-u_{\lambda}} (a)\to U_{1-p}(a)$ in the strong$^*$ topology of $\mathfrak{A}^{**}$.\smallskip
	
	Since the operators $U_a$ and $U_{b^{\frac{1}{2}}}$ are both positive, the net $ \left(U_a U_{b^{\frac12}}(\textbf{1}-u_{\lambda})\right)$ is decreasing and converges strongly to $ U_a U_{b^{\frac12}}(\textbf{1}-p).$ By Dini's theorem, applied to the previous elements regarded as continuous functions on the space of all positive functionals in the closed unit ball of $\mathfrak{A}^*$, we have $$\lim_{\lambda} \left\| U_a U_{b^{\frac12}}(\textbf{1}-u_{\lambda}) - U_a U_{b^{\frac12}}(\textbf{1}-p) \right\|=0.$$ It remains to show that
	$$\|U_a (b) +J_{sa}\|=\| U_a U_{b^{\frac12}}(\textbf{1}-p)\|. $$ Namely, $\big( \mathfrak{A}_{sa}/J_{sa} \big)^{**}=\mathfrak{A}_{sa}^{**}/J_{sa}^{\circ \circ}=\mathfrak{A}_{sa}^{**}/U_p(\mathfrak{A}_{sa}^{**})=U_{1-p}(\mathfrak{A}_{sa}^{**})$. Let $\pi: \mathfrak{A}_{sa}\to \mathfrak{A}_{sa}/J_{sa}$ denote the canonical projection of $\mathfrak{A}$ onto $\mathfrak{A}_{sa}/J_{sa}$. Then $\pi^{**}:\mathfrak{A}_{sa}^{**}\to (\mathfrak{A}/J)_{sa}^{**}$ is given by $\pi^{**}(a)=(1-p)\circ a, \; a\in \mathfrak{A}_{sa}^{**}$.\smallskip
	
	Since $p$ is central (and hence $1-p$ also is central), it can be easily checked that  $U_{b^{\frac12}} (\textbf{1}-p) =(1-p)\circ b$ and $$\begin{aligned} U_a ((1-p)\circ b) & =2a\circ (a\circ ( (1-p)\circ b))-a^2\circ ((1-p)\circ b) \\
		&=2a\circ ((1-p)\circ ( a\circ b))-(1-p)\circ (a^2\circ b) \\
		&=(1-p)\circ (2a \circ ( a\circ b))-(1-p)\circ (a^2\circ b)=(1-p)\circ U_a (b).
	\end{aligned}$$ Therefore
	$$\|U_a (b) +J_{sa}\|=\|\pi^{**}(U_a (b))\|=\|(1-p)\circ U_a (b)\|=\| U_a U_{b^{\frac12}}(\textbf{1}-p)\|. $$
\end{proof}

We can state now the desired extension of Pedersen's theorem commented at the beginning of this section.  

\begin{theorem}\label{t extension of onto Jordan starhom sigma unital}  Let $\mathfrak{A}$ and $\mathfrak{B}$ be two JB$^*$-algebras and assume that $\mathfrak{A}$ is $\sigma$-unital. Suppose $\Phi : \mathfrak{A}\to \mathfrak{B}$ is a surjective Jordan $^*$-homomorphism. Then there exists an extension of $\Phi$ to a surjective J-strict continuous Jordan $^*$-homomorphism $\tilde{\Phi} :  M(\mathfrak{A})\to M(\mathfrak{B})$.
\end{theorem}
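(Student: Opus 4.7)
The plan is to take $\tilde{\Phi} := \Phi^{**}|_{M(\mathfrak{A})}$ supplied by Proposition~\ref{p extensions of surjective Jordan homomorphisms}, which is automatically a J-strict continuous Jordan $^*$-homomorphism extending $\Phi$, and then to extract surjectivity by combining the intermediate value theorem (Theorem~\ref{t intermediate value theorem for positive jordan star hom}) with the J-strict completeness of $M(\mathfrak{A})$ (Theorem~\ref{t the strict topology is complete}). As a preliminary step, fix a strictly positive $h \in \mathfrak{A}^+$ and observe that $\Phi(h)$ is strictly positive in $\mathfrak{B}$: any positive $\psi \in \mathfrak{B}^*$ with $\psi(\Phi(h)) = 0$ pulls back to a positive $\psi \circ \Phi \in \mathfrak{A}^*$ annihilating $h$, hence $\psi \circ \Phi \equiv 0$, and surjectivity of $\Phi$ forces $\psi \equiv 0$; thus $\mathfrak{B}$ is itself $\sigma$-unital. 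Using continuous functional calculus on $h$, I would build a sequential nested approximate unit $(e_n) \subset \mathfrak{A}^+$ with $0 \leq e_n \leq e_{n+1} \leq \mathbf{1}$ and $e_n \circ e_{n+1} = e_n$, so that $v_n := \Phi(e_n)$ inherits the same nested structure in $\mathfrak{B}$.

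Splitting a general $y \in M(\mathfrak{B})$ into self-adjoint summands reduces the surjectivity problem to lifting $y \in M(\mathfrak{B})_{sa}$ with $\|y\| \leq 1$. Set $z_n := U_{v_n^{1/2}}(y) \in \mathfrak{B}_{sa}$, which lies in $\mathfrak{B}$ because $\mathfrak{B}$ is a quadratic ideal of $M(\mathfrak{B})$ and $v_n^{1/2} \in \mathfrak{B}$. Positivity of $U_{v_n^{1/2}}$ combined with $-\mathbf{1} \leq y \leq \mathbf{1}$ yields $-v_n \leq z_n \leq v_n$, so that $0 \leq z_n + v_n \leq 2 v_n = \Phi(2 e_n)$. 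Theorem~\ref{t intermediate value theorem for positive jordan star hom} then produces $t_n \in \mathfrak{A}$ with $0 \leq t_n \leq 2 e_n$ and $\Phi(t_n) = z_n + v_n$; the element $x_n := t_n - e_n \in \mathfrak{A}_{sa}$ satisfies $-e_n \leq x_n \leq e_n$, whence $\|x_n\| \leq 1$, and $\Phi(x_n) = z_n$.

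The hard part will be arranging that $(x_n)$ is J-strict Cauchy in $M(\mathfrak{A})$, because Theorem~\ref{t intermediate value theorem for positive jordan star hom} supplies each $t_n$ in isolation with no coherence between successive choices. My plan is to perform the lifting inductively, so that having chosen $t_n$, the next lift $t_{n+1}$ is obtained by applying the intermediate value theorem to a shifted positive problem built from $t_n$ and from the increment $z_{n+1} - z_n$, exploiting the nested identity $e_n \circ e_{n+1} = e_n$ to force $e_k \circ (x_n - x_m)$ to become controllably small as $n, m > k$ grow. Combined with $\|a - a \circ e_k\| \to 0$ for every $a \in \mathfrak{A}$ and the Jordan inequality $\|a \circ w\|^2 \leq \|a\| \|U_w(a)\|$ for positive self-adjoint $a$ (used to upgrade control against $e_k$ to control against arbitrary $a$), this will deliver $\|a \circ (x_n - x_m)\| \to 0$ for each $a \in \mathfrak{A}$, i.e., the J-strict Cauchy property. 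Once that coherence is secured, Theorem~\ref{t the strict topology is complete} produces a J-strict limit $x \in M(\mathfrak{A})_{sa}$ with $\|x\| \leq 1$; the J-strict continuity of $\tilde{\Phi}$ together with the convergence $z_n \to y$ J-strictly in $M(\mathfrak{B})$---obtained by expanding $U_{v_n^{1/2}}(y) = 2 (v_n^{1/2} \circ y) \circ v_n^{1/2} - v_n \circ y$ and invoking Lemma~\ref{l strict convergence of Jordan times an approximate unit} and Remark~\ref{r Lemma 3.3 is valid for striclty convergent nets of sa elements with limit 1} applied to the approximate units $(v_n)$ and $(v_n^{1/2})$---then forces $\tilde{\Phi}(x) = y$.
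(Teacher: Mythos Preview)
Your overall architecture matches the paper's: obtain $\tilde\Phi$ from Proposition~\ref{p extensions of surjective Jordan homomorphisms}, reduce surjectivity to lifting a single element of $M(\mathfrak B)$, approximate it from inside $\mathfrak B$ by a sequence $(z_n)$, lift each $z_n$ via Theorem~\ref{t intermediate value theorem for positive jordan star hom}, and invoke J-strict completeness (Theorem~\ref{t the strict topology is complete}) on the resulting sequence in $M(\mathfrak A)$. The gap is exactly where you flag it: you have no mechanism for forcing the lifts $(x_n)$ to be J-strict Cauchy. The nested condition $e_n\circ e_{n+1}=e_n$ does not help here, because Theorem~\ref{t intermediate value theorem for positive jordan star hom} supplies \emph{some} lift with no uniqueness, so successive $x_n$ can differ by arbitrary elements of $\ker\Phi$; nothing in your scheme cuts that freedom down. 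Moreover, working with self-adjoint $y$ rather than positive $z$ means your increments $z_{n+1}-z_n$ need not be positive, so the ``shifted positive problem built from $t_n$ and $z_{n+1}-z_n$'' is not well posed as stated.

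The paper closes this gap with two ingredients you have not used. First, it takes $z\in M(\mathfrak B)^+$ and sets $b_n:=U_{z^{1/2}}(\Phi(u_n))$ (note the order: multiplier in the subscript, approximate unit in the argument), so that positivity of $U_{z^{1/2}}$ makes $(b_n)$ \emph{monotone increasing} with $b_n\leq\|z\|\,\mathbf 1$; this allows lifting the increment $b_{n+1}-b_n\geq 0$ below $\mathbf 1-a_n$. Second, and this is the decisive point, having found \emph{some} lift $c$ of $b_{n+1}-b_n$, the paper uses Lemma~\ref{l norm element ideal} to compute $\|U_h(c)+\ker\Phi\|=\lim_j\|U_h U_{c^{1/2}}(\mathbf 1-v_j)\|$ for an approximate unit $(v_j)$ of $\ker\Phi$, identifies this with $\|U_{\Phi(h)}(b_{n+1}-b_n)\|$, and then \emph{replaces} $c$ by the perturbation $U_{c^{1/2}}(\mathbf 1-v_{j_0})$ (still a positive lift of the same increment, still $\leq c$) chosen so that $\|U_h(a_{n+1}-a_n)\|<2^{-n}$. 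The J-strict Cauchy property then follows from the inequality $\|U_h(d)\circ(a_{n+1}-a_n)\|^2\leq \|a_{n+1}-a_n\|\,\|U_hU_dU_h(a_{n+1}-a_n)\|$ together with Lemma~\ref{l the U map of a strictly positive element has norm dense image}, which says $U_h(\mathfrak A)$ is dense. In short: the missing idea is the deliberate perturbation by a kernel element, measured through $U_h$ via Lemma~\ref{l norm element ideal}.
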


\begin{proof} The existence of a J-strict continuous  Jordan $^*$-homomorphism $\tilde{\Phi} :  M(\mathfrak{A})\to M(\mathfrak{B})$ extending $\Phi$ is guaranteed by Proposition~\ref{p extensions of surjective Jordan homomorphisms}. We need to prove that $\tilde{\Phi}$ is surjective. It suffices to show that every positive element $z\in M(\mathfrak{B})$ is in the image of $\tilde{\Phi}$.  \smallskip
	
	
Let $z$ be a positive element in $M(\mathfrak{B})$. By hypothesis we can find a countable approximate unit $(u_n)_n$ of $\mathfrak{A}$. Clearly $(u_n)_n\to \textbf{1}_{_{\mathfrak{A}}}$ in the J-strict topology of $M(\mathfrak{A})$, and thus $(\Phi(u_n))_n\to \tilde{\Phi}(\textbf{1}_{_{\mathfrak{A}}})$ in the J-strict topology of $M(\mathfrak{B})$. On the other hand, since $\Phi$ is surjective, for each $b = \Phi (a)\in \mathfrak{B}$ (with $a\in \mathfrak{A}$), we have $$b \circ (\Phi(u_n)- \textbf{1}_{_{\mathfrak{B}}}) = \Phi(a) \circ (\Phi(u_n)- \textbf{1}_{_{\mathfrak{B}}}) =  \Phi(a \circ u_n)- b\to b-b=0,$$ in norm by the continuity of $\Phi$ and the fact that $(u_n)$ is an approximate unit for $\mathfrak{A}$. We deduce that $(\Phi(u_n))_n\to \textbf{1}_{_{\mathfrak{B}}}$ in the J-strict topology of $M(\mathfrak{B})$, and since this topology is Hausdorff we get $\tilde{\Phi}( \textbf{1}_{_{\mathfrak{A}}}) = \textbf{1}_{_{\mathfrak{B}}}.$\smallskip

We define a sequence $(b_n)_n\subseteq \mathfrak{B}$ given by $$b_n := U_{z^{\frac12}} (\Phi (u_n)) = 2 (z^{\frac12}\circ \Phi (u_n))\circ z^{\frac12} - z\circ \Phi (u_n).$$  Since $(\Phi(u_n))_n\to \textbf{1}_{_{\mathfrak{B}}}$ in the J-strict topology of $M(\mathfrak{B})$, by Remark~\ref{r Lemma 3.3 is valid for striclty convergent nets of sa elements with limit 1}, the sequences $( z\circ \Phi (u_n))_n$ and  $(z^{\frac12}\circ \Phi (u_n))_n$ converge in the J-strict topology to $z$ and $z^{\frac12}$, respectively. A new application of  Remark~\ref{r Lemma 3.3 is valid for striclty convergent nets of sa elements with limit 1} proves that $((z^{\frac12}\circ \Phi (u_n))\circ z^{\frac12} )_n\to z$ in the J-strict topology. We have therefore proved that $(b_n)_n\to z$ in the J-strict topology. We further know that since $(\Phi (u_n))_n$ is monotone increasing and $U_{z^{\frac12}}$ is a positive operator, the sequence $(b_n)_n$ is monotone increasing too.\smallskip

Let $h$ be a norm-one strictly positive element in $\mathfrak{A}$ whose existence is guaranteed by the hypotheses.  It follows from the above that the sequences $(b_n\circ \Phi(h))_n$ and  $(b_n\circ \Phi(h^2))_n$ are norm Cauchy. Up to considering an appropriate subsequence, we may assume that \begin{equation}\label{eq bn+1 - bn times Phi(h)}
\left\| (b_{n+1}-b_n) \circ \Phi(h) \right\|, \ \left\| (b_{n+1}-b_n) \circ \Phi(h^2) \right\| < 4^{-n}, \hbox{ for all natural } n.
\end{equation}

We shall prove by induction the existence of a sequence of positive elements $(a_n)_n\subset M(\mathfrak{A})$ satisfying \begin{equation}\hbox{ $a_n\leq a_{n+1}\leq 1,$ $\Phi (a_n) = b_n,$ and $\| U_h (a_{n+1}-a_n) \| < 2^{-n}$ for all $n\in \mathbb{N}$.}\end{equation} We shall work with the unital extensions of $\Phi$, $\mathfrak{A}$ and $\mathfrak{B}$, the extension of $\Phi$ coincides with an appropriate restriction of $\tilde{\Phi}$. We shall denote the unital extension of $\Phi$ by the same symbol $\Phi$. Since the unital extension of $\Phi$ is onto, Theorem~\ref{t intermediate value theorem for positive jordan star hom} proves the existence of a positive $a_1\in \mathfrak{A}\oplus \mathbb{C} \textbf{1}$ such that $ \tilde{\Phi} (a_1 )= b_1$ and $0\leq a_1\leq \textbf{1}$. Suppose we have already defined the elements $a_1\leq a_2\leq \ldots\leq a_n$ in $M(\mathfrak{A})$ satisfying the above properties. Since $\tilde{\Phi}(\textbf{1}-a_n) = \textbf{1}-b_n \geq b_{n+1}-b_n$, Theorem~\ref{t intermediate value theorem for positive jordan star hom}, applied to the unital extension of $\Phi$, assures the existence of $0\leq c\leq \textbf{1}-a_n$ in $M(\mathfrak{A})$ such that $\Phi (c) = b_{n+1} -b_n$. The element we need will follow after a perturbation by an element in the kernel of $\Phi$. The kernel of $\Phi$, $\ker(\Phi)$, is a Jordan $^*$-ideal of $\mathfrak{A}_1=\mathfrak{A}\oplus \mathbb{C}\textbf{1}$ and the quotient mapping $[\Phi]: \mathfrak{A}_1/\ker(\Phi) \to  \mathfrak{B}_1=\mathfrak{B}\oplus \mathbb{C}\textbf{1}$ is a Jordan $^*$-isomorphism, in particular an isometry. Let $(v_j)_j$ be an approximate identity of $\ker(\Phi)$. By applying  Lemma~\ref{l norm element ideal}, the previous property of $[\Phi]$, and \eqref{eq bn+1 - bn times Phi(h)} we get $$
\begin{aligned}\lim_{j} \left\| U_{h} U_{c^{\frac12}} (\textbf{1}-v_j)  \right\| &= \left\| U_{h} ( c) +\ker (\Phi) \right\|  = \left\| [\Phi] \left(U_{h} ( c) +\ker (\Phi)\right) \right\| \\
&= \left\| U_{\Phi(h)} ( \Phi(c))  \right\| = \left\| U_{\Phi(h)} ( b_{n+1}-b_{n})  \right\| \\
&\leq 2 \left\| \left(\Phi(h)\circ ( b_{n+1}-b_{n})\right) \circ \Phi(h)  \right\| + \left\| \Phi(h)^2 \circ ( b_{n+1}-b_{n})  \right\| \\
&< 2 \ 4^{-n} + 4^{-n} \leq  2^{-n} \ \ \ \hbox{ for all } n>2.
\end{aligned}$$ We can therefore find $j_0$ such that  $\left\| U_{h} U_{c^{\frac12}} (\textbf{1}-v_{j_0})  \right\|< 2^{-n}$. We set $$a_{n+1} := a_n + U_{c^{\frac12}} (\textbf{1}-v_{j_0}).$$ Clearly $$ a_{n} \leq a_{n+1} \leq a_n + U_{c^{\frac12}} (\textbf{1}) = a_n + c\leq a_n + \textbf{1}-a_n = \textbf{1}.$$ By definition $$\Phi (a_{n+1}) = \Phi(a_n) + \Phi (c) - \Phi\left( U_{c^{\frac12}} (v_{j_0}) \right) = b_n + b_{n+1} -b_n = b_{n+1},$$ where we employed that $U_{c^{\frac12}} (v_{j_0})\in \ker(\Phi)$ because $v_{j_0}\in\ker(\Phi)$. Furthermore, by construction $$\left\| U_h (a_{n+1}- a_{n}) \right\| = \left\| U_h U_{c^{\frac12}} (\textbf{1}-v_{j_0}) \right\|< 2^{-n}.$$    

Let us now proceed with the final step in our argument. Fix an arbitrary positive $d\in \mathfrak{A}$. By combining the properties of the elements $a_n$'s, the inequality in \cite[Lemma 3.5.2$(ii)$]{HOS} (see \eqref{inequelity in HOS}), and the fundamental identity \eqref{eq main identity U maps} we derive that  
$$\begin{aligned}
\left\| U_h (d) \circ (a_{n+1}-a_n) \right\|^2 &\leq \| a_{n+1}-a_n\| \ \left\| U_{U_h (d)} (a_{n+1}-a_n) \right\| \\
&= \| a_{n+1}-a_n\| \ \left\| U_h U_d U_h (a_{n+1}-a_n) \right\| \\
&\leq 2 \|d\|^2 \ \left\| U_h (a_{n+1}- a_{n}) \right\| < \|d\| \ 2^{-n+1}.
\end{aligned} $$ The arbitrariness of $d$ together together with the fact that $h$ is strictly positive and Lemma~\ref{l the U map of a strictly positive element has norm dense image} prove that $(a_n)_n$ is a J-strict Cauchy sequence in $M(\mathfrak{A})$. Theorem~\ref{t the strict topology is complete} implies that $(a_n)_n$ converges to some $a_0$ in the J-strict topology of $M(\mathfrak{A})$. Since $(\tilde{\Phi} (a_n))_n = (b_n)_n\to z$ in the J-strict topology and $ \tilde{\Phi}$ is J-strict continuous, we obtain $\tilde{\Phi} (a_0) =z$, which finishes the proof.
\end{proof}

\begin{proposition}\label{p extension of onto triple hom sigma unital}  Let $\mathfrak{A}$ and $\mathfrak{B}$ be two JB$^*$-algebras and assume that $\mathfrak{A}$ is $\sigma$-unital. Suppose $\Phi : \mathfrak{A}\to \mathfrak{B}$ is a surjective triple homomorphism. Then there exists an extension of $\Phi$ to a surjective J-strict continuous triple homomorphism $\tilde{\Phi} :  M(\mathfrak{A})\to M(\mathfrak{B})$.
\end{proposition}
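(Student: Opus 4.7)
The existence of a J-strict continuous triple homomorphism $\tilde{\Phi}:M(\mathfrak{A})\to M(\mathfrak{B})$ extending $\Phi$ is already supplied by Proposition~\ref{p extensions of surjective triple homomorphisms}; the new content is surjectivity under the $\sigma$-unital hypothesis on $\mathfrak{A}$. The plan is to reduce this to the Jordan $^*$-epimorphism case settled in Theorem~\ref{t extension of onto Jordan starhom sigma unital} by passing to a unitary isotope on the target side, thereby recycling all the hard work (intermediate value theorem for Jordan $^*$-epimorphisms, strictly positive elements, the construction of a J-strict Cauchy lifting sequence) already done in that theorem.

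Concretely, I would set $u:=\Phi^{**}(\textbf{1})$, where $\textbf{1}$ denotes the unit of $\mathfrak{A}^{**}$. As observed in the proof of Proposition~\ref{p extensions of surjective triple homomorphisms}, the surjectivity of $\Phi$ forces $u$ to be a unitary tripotent in $\mathfrak{B}^{**}$ that in fact lies in $M(\mathfrak{B})$, and moreover $\Phi:\mathfrak{A}\to\mathfrak{B}(u)$ becomes a surjective Jordan $^*$-homomorphism into the JB$^*$-algebra $\mathfrak{B}(u)$, i.e., the copy of $\mathfrak{B}$ sitting as a JB$^*$-subalgebra of the $u$-isotope of $M(\mathfrak{B})$. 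By Remark~\ref{r u-isotopes} one has $M(\mathfrak{B}(u))=M(\mathfrak{B})$ together with $S(M(\mathfrak{B}),\mathfrak{B}(u))=S(M(\mathfrak{B}),\mathfrak{B})$, so passing to the isotope changes neither the target multiplier algebra nor the J-strict topology on it.

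Since $\mathfrak{A}$ itself is unchanged and still $\sigma$-unital, I would then apply Theorem~\ref{t extension of onto Jordan starhom sigma unital} to the surjective Jordan $^*$-homomorphism $\Phi:\mathfrak{A}\to\mathfrak{B}(u)$ to obtain a surjective, J-strict continuous Jordan $^*$-homomorphism $\tilde{\Phi}:M(\mathfrak{A})\to M(\mathfrak{B}(u))=M(\mathfrak{B})$ extending $\Phi$, where the J-strict continuity transfers back to the original topology on $M(\mathfrak{B})$ by the remark above. To conclude that this $\tilde{\Phi}$ is in fact a triple homomorphism with respect to the original triple product, I would invoke the uniqueness of the JB$^*$-triple product expressed in \eqref{eq uniqueness of the triple product}: every Jordan $^*$-homomorphism between unital JB$^*$-algebras preserves the triple product, and this triple product on $M(\mathfrak{B})$ is the same whether computed from the $(\circ_u,*_u)$ or from the original $(\circ,*)$ structure. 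By this same uniqueness, $\tilde{\Phi}$ must agree with the extension already constructed in Proposition~\ref{p extensions of surjective triple homomorphisms}.

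The main conceptual step is the isotope reduction; once the bookkeeping supplied by Remark~\ref{r u-isotopes} is in place, no genuinely new obstacle arises beyond what was already overcome for the Jordan $^*$-epimorphism version, and the $\sigma$-unitality hypothesis is preserved trivially because $\mathfrak{A}$ is untouched by the reduction.
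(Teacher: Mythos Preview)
Your proposal is correct and follows essentially the same approach as the paper: set $u=\Phi^{**}(\textbf{1})$, observe via Proposition~\ref{p extensions of surjective triple homomorphisms} and Remark~\ref{r u-isotopes} that $\Phi:\mathfrak{A}\to\mathfrak{B}(u)$ is a surjective Jordan $^*$-homomorphism with $M(\mathfrak{B}(u))=M(\mathfrak{B})$ and matching J-strict topologies, apply Theorem~\ref{t extension of onto Jordan starhom sigma unital}, and conclude that the resulting surjective extension is a triple homomorphism by the uniqueness of the triple product in \eqref{eq uniqueness of the triple product}. The paper's proof is slightly terser but the route is identical.
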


\begin{proof} As in the proof of Proposition~\ref{p extensions of surjective triple homomorphisms}, $\Phi^{**}:\mathfrak{A}^{**} \to \mathfrak{B}^{**}$ is a weak$^*$ continuous triple homomorphism, $\Phi^{**} (M(\mathfrak{A})) \subseteq M(\mathfrak{B})$, and the element $u=\Phi^{**} (\textbf{1})$ is a unitary tripotent in $M(\mathfrak{B})$. Furthermore, keeping the notation in Remark~\ref{r u-isotopes}, $\mathfrak{B} (u)$ is a JB$^*$-subalgebra of $M(\mathfrak{B})_2 (u)$ and the mapping $\Phi : \mathfrak{A}\to \mathfrak{B} (u)$ is a surjective and unital Jordan $^*$-homomorphism. We are in a position to apply Theorem~\ref{t extension of onto Jordan starhom sigma unital} to $\Phi : \mathfrak{A}\to \mathfrak{B} (u)$, to find an extension of $\Phi$ to a surjective and J-strict continuous Jordan $^*$-homomorphism $\tilde{\Phi} :  M(\mathfrak{A})\to M(\mathfrak{B}(u)) = M(\mathfrak{B})$. Clearly, $\tilde{\Phi}$ is a triple homomorphism by the uniqueness of the triple product.
\end{proof}

When in the proof of Corollary~\ref{c extension of OP }, Proposition~\ref{p extensions of surjective triple homomorphisms} is replaced with Proposition~\ref{p extension of onto triple hom sigma unital} we get the next result.

\begin{corollary}\label{c extension of OP surjective with strictly positive element}  Let $\mathfrak{A}$ and $\mathfrak{B}$ be two JB$^*$-algebras and assume that $\mathfrak{A}$ is $\sigma$-unital. Then every continuous and surjective orthogonality preserving operator $\Phi:\mathfrak{A} \to \mathfrak{B}$ extends to a surjective J-strict continuous orthogonality preserving operator $\tilde{\Phi}: M(\mathfrak{A)}\to M(\mathfrak{B})$.
\end{corollary}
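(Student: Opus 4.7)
The strategy is to follow the proof of Corollary~\ref{c extension of OP } verbatim, substituting Proposition~\ref{p extension of onto triple hom sigma unital} for Proposition~\ref{p extensions of surjective triple homomorphisms}, and then to supplement it with an invertibility check that upgrades the J-strict continuous extension to a surjection.

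First, by \cite[Proposition 2.7]{GarPe2021JB} I would decompose $\Phi$ as $\Phi(x) = h\circ_r S(x)$ for $x\in\mathfrak{A}$, where $S\colon\mathfrak{A}\to\mathfrak{B}$ is a surjective triple homomorphism, $h\in M(\mathfrak{B})$ is positive with range tripotent $r$ a unitary of $M(\mathfrak{B})$, and $h$ operator commutes with every element of $\mathfrak{B}$ inside the isotope $M(\mathfrak{B})_2(r)$. Since $\mathfrak{A}$ is $\sigma$-unital, Proposition~\ref{p extension of onto triple hom sigma unital} provides a surjective J-strict continuous triple homomorphism $\tilde S\colon M(\mathfrak{A})\to M(\mathfrak{B})$ extending $S$. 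Setting $\tilde\Phi(z) := h\circ_r \tilde S(z)$, the reasoning of Corollary~\ref{c extension of OP } (via Remark~\ref{r u-isotopes} and the centrality of $h$ in the isotope) shows that $\tilde\Phi$ extends $\Phi$, is J-strict continuous, and preserves orthogonality.

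It remains to verify surjectivity of $\tilde\Phi$. Since $\tilde S$ is surjective, it suffices to prove that left multiplication by $h$ in the isotope, $L_h(y) := h\circ_r y$, maps $M(\mathfrak{B})$ onto $M(\mathfrak{B})$. By property~(c) preceding Corollary~\ref{c extension of OP }, $h$ is positive and invertible in $\mathfrak{B}^{**}_2(r)$, with Jordan inverse $h^{-1}\in\mathfrak{B}^{**}_2(r)$. I would first show that $h^{-1}\in M(\mathfrak{B})$: given $b\in\mathfrak{B}$, the surjectivity of $\Phi$ together with that of $S$ produces $y\in\mathfrak{B}$ with $h\circ_r y = b$; the operator commutativity of $h$ with $y$ in $M(\mathfrak{B})_2(r)$ transfers to $h^{-1}$ and $y$, and within the JC$^*$-algebra generated by $h$, $h^{-1}$ and $y$ (invoking Shirshov--Cohn) the Jordan identity $h^{-1}\circ_r(h\circ_r y) = y$ becomes an ordinary associative computation, yielding $y = h^{-1}\circ_r b$. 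This shows $h^{-1}\circ_r b\in\mathfrak{B}$ for every $b\in\mathfrak{B}$, hence $h^{-1}\in M(\mathfrak{B})$, and consequently $L_h$ is bijective on $M(\mathfrak{B})$ with inverse $L_{h^{-1}}$. For any $w\in M(\mathfrak{B})$, taking $y := h^{-1}\circ_r w\in M(\mathfrak{B})$ and $z\in M(\mathfrak{A})$ with $\tilde S(z) = y$ yields $\tilde\Phi(z) = h\circ_r y = w$.

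The main obstacle I anticipate is confirming that $h^{-1}\in M(\mathfrak{B})$, i.e.\ that the Jordan inverse of $h$ taken in the bidual actually sends $\mathfrak{B}$ back into $\mathfrak{B}$. This rests on carefully combining the operator commutativity of $h$ with $\mathfrak{B}$ inside the isotope, the surjectivity of the original $\Phi$, and Shirshov--Cohn in order to treat Jordan expressions in the commuting triple $\{h,h^{-1},b\}$ as ordinary associative products in some $B(H)$; the $\sigma$-unitality of $\mathfrak{A}$ enters only through its role in furnishing the surjective $\tilde S$ via Proposition~\ref{p extension of onto triple hom sigma unital}.
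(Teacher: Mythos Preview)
Your approach is exactly the one the paper intends: its proof is literally the one-line instruction to rerun Corollary~\ref{c extension of OP } with Proposition~\ref{p extension of onto triple hom sigma unital} in place of Proposition~\ref{p extensions of surjective triple homomorphisms}, and you have unpacked that correctly, including the decomposition $\Phi = L_h\circ S$ and the definition $\tilde\Phi = L_h\circ\tilde S$. Your treatment of surjectivity is a genuine addition, since the paper leaves that step entirely implicit.

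There is, however, one point where your argument is incomplete. After proving $h^{-1}\in M(\mathfrak{B})$ you assert that ``consequently $L_h$ is bijective on $M(\mathfrak{B})$ with inverse $L_{h^{-1}}$'', and then use $h\circ_r(h^{-1}\circ_r w)=w$ for arbitrary $w\in M(\mathfrak{B})$. But in a Jordan algebra this identity does \emph{not} follow from invertibility of $h$ alone: a short computation in any special Jordan algebra shows $h\circ(h^{-1}\circ w)=\tfrac14\bigl(2w+hwh^{-1}+h^{-1}wh\bigr)$, which equals $w$ precisely when $h$ operator commutes with $w$. Your Shirshov--Cohn step establishes this only for $w\in\mathfrak{B}$, where operator commutativity of $h$ with $w$ is given; for general $w\in M(\mathfrak{B})$ nothing in your argument supplies it. The obstacle you flagged, namely $h^{-1}\in M(\mathfrak{B})$, is actually the easier half (and would also follow from spectral permanence in the unital inclusion $M(\mathfrak{B})_2(r)\subseteq\mathfrak{B}^{**}_2(r)$); the subtler point is passing from ``$h$ central over $\mathfrak{B}$'' to ``$L_hL_{h^{-1}}=\mathrm{Id}$ on $M(\mathfrak{B})$''.

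Two clean ways to close the gap: (i) observe that for $w\in M(\mathfrak{B})$ and $a\in\mathfrak{B}$ one has, using only that $h$ and $h^{-1}$ operator commute with $a$,
\[
\bigl(h\circ_r(h^{-1}\circ_r w)\bigr)\circ_r a
= h\circ_r\bigl(h^{-1}\circ_r(w\circ_r a)\bigr)
= w\circ_r a,
\]
since $w\circ_r a\in\mathfrak{B}$; the Hausdorffness of the J-strict topology then forces $h\circ_r(h^{-1}\circ_r w)=w$. Alternatively (ii) upgrade the centrality: for $b\in\mathfrak{B}_{sa}$ one has $h^2\circ_r b=U_h(b)$, and Remark~\ref{r Lemma 3.3 is valid for striclty convergent nets of sa elements with limit 1} lets you pass this identity through a J-strict approximation $b_\lambda\to z\in M(\mathfrak{B})_{sa}$, yielding $h^2\circ_r z=U_h(z)$ and hence that $h$ is central in $M(\mathfrak{B})_2(r)$, which is what the paper asserts in the proof of Corollary~\ref{c extension of OP }.
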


\section{Open problems}

The reader is already aware of the close parallelism between the (C$^*$-)strict and strong$^*$ topologies in a C$^*$-algebra $A$. More concretely, as we noted in Remark~\ref{r final comments with seminorms related to Hilbertian seminorms}, the C$^*$-strict topology of $M(A)$ coincides with the topology generated by the seminorms of the form $x\mapsto   \| a (x \circ x^*) a\|^{\frac12}$ ($x\in M(A)$) with $a$ in $A_{sa},$ while the C$^*$-strong$^*$ topology of $A^{**}$ is the one determined by all the preHilbertian seminorms defined by  $\displaystyle x\mapsto  \varphi\left( x \circ x^* \right)^{\frac12}$ ($x\in M(A)$) with $\varphi$ running in the set of all normal states on $A^{**}$. The strong$^*$ topology of $A$ is the restriction of the corresponding topology of $A^{**}$ to $A$. The celebrated Sakai's theorem proves that the product of each von Neumann algebra $W$ is separately weak$^*$ continuous, and a fundamental property of the strong$^*$ topology asserts that the product of $W$ is jointly strong$^*$ continuous on bounded sets (cf. \cite[Proposition 1.8.12]{Sa} and \cite{RodPa91} for a generalization in the case of JBW$^*$-algebras and JBW$^*$-triples).\smallskip

It is not hard to see that the strict topology of the multipliers algebra of a C$^*$-algebra $A$ enjoys similar properties to those described for the weak$^*$  and the strong$^*$ topologies of a von Neumann algebra. Namely, suppose $(y_\lambda)_{\lambda}\to y$ with respect to the strict topology of $M(A)$. Given $x\in M(A)$ and $a\in A$,  we have $\| a (x y_\lambda - xy)  \| = \| (a x) ( y_\lambda - y)  \| \to 0,$  since $ ax \in A$, and $\| (x y_\lambda - x y )a   \| = \| x ((  y_\lambda - y) a)  \| \to 0,$ by the assumptions on   $(y_\lambda)_{\lambda}$. Therefore,  $(x y_\lambda)_{\lambda}\to x y$ in the strict topology of $M(A)$. Similarly, $( y_\lambda x)_{\lambda}\to y x$ with respect to the strict topology of $M(A)$. This shows that the product of $M(A)$ is separately strict continuous. \smallskip

If we take now two bounded nets $(y_\lambda)_{\lambda}\to y$ and $(x_\mu)_{\mu}\to x$ with respect to the strict topology of $M(A)$ and an element $a\in A$, the inequalities  
$$\begin{aligned}
\| a ( y_\lambda x_\mu - y x )\| & \leq \| a ( y_\lambda x_\mu - y x_{\mu} )\| + \| a ( y x_\mu - y x )\| \\
&\leq \| a ( y_\lambda  - y )\| \|x_\mu\| + \| (a  y) (x_\mu - x )\|\end{aligned}$$
$$\begin{aligned}
	\| ( y_\lambda x_\mu - y x ) a \| & \leq \| ( y_\lambda x_\mu - y_{\lambda} x ) a\| + \| ( y_\lambda x - y x ) a \| \\
	&\leq \|y_\lambda\| \| (x_\mu -x ) a\| + \| ( y_\lambda - y ) x a \|,\end{aligned}$$ assure that $( a ( y_\lambda x_\mu - y x ))$ and $( ( y_\lambda x_\mu - y x )a )$ are norm null nets, and hence $(x_{\mu} y_\lambda)_{\lambda}\to x y$ with respect to the strict topology of $M(A)$. That is, the product of $M(A)$ is jointly strict continuous on bounded sets.\smallskip 

Let $\mathfrak{A}$ be a JB$^*$-algebra. The following two natural questions remain open: 
\begin{enumerate}[$(Q1)$]\item Is the Jordan product of $M(\mathfrak{A})$ separately J-strict continuous (on bounded sets)?
\item Is the Jordan product of $M(\mathfrak{A})$ jointly J-strict continuous on bounded sets?
\end{enumerate}

\smallskip\smallskip\smallskip

\noindent\textbf{Funding} First, second, and fourth  authors partially supported by grant PID2021-122126NB-C31 funded by MCIN/AEI/10.13039/501100011033 and by ``ERDF A way of making Europe'', Junta de Andaluc\'{\i}a grants FQM375 and PY20$\underline{\ }$00255, and by the IMAG--Mar{\'i}a de Maeztu grant CEX2020-001105-M/AEI/10.13039/ 501100011033. Third author partially supported by NSF of China (12171251).

\smallskip\smallskip

\noindent\textbf{Data Availability} Statement Data sharing is not applicable to this article as no datasets were generated or analysed during the preparation of the paper.\smallskip\smallskip

\noindent\textbf{Declarations}
\smallskip\smallskip

\noindent\textbf{Conflict of interest} The authors declare that they have no conflict of interest.


\begin{thebibliography}{99}
	
\bibitem{AkkLaa95} M. Akkar and M. Laayouni, Th\'{e}or\`{e}mes de factorisation dans les alg\`{e}bres compl\`{e}tes de Jordan,  \emph{Collect. Math.} \textbf{46} (1995), no. 3, 239--254.

\bibitem{AkkLaa96} M. Akkar and M. Laayouni, Th\'{e}or\`{e}mes de factorisation dans les alg\`{e}bres norm\'{e}es compl\`{e}tes non associatives. \emph{Colloq. Math.} \textbf{70} (1996), no. 2, 253--264.
 	

\bibitem{AkPedTom73} C. A. Akemann, G. K. Pedersen and J. Tomiyama, Multipliers of C$^*$-algebras, \emph{J.
Func. Anal.} \textbf{13} (1973), 277--301.


\bibitem{AlBreExVill09} J. Alaminos, M. Bresar, J. Extremera and A. Villena, Maps preserving zero products,  \emph{Studia Math.} {\bf 193} (2009), no. 2, 131-159.



\bibitem{AlfsenShultz2003} E. M. Alfsen and F.W. Shultz, \emph{Geometry of state spaces of operator algebras}, Mathematics: Theory \& Applications, Birkh\"{a}user Boston, Inc., Boston, MA, 2003.



%












\bibitem{BaTi} T. Barton and R. M. Timoney, Weak{$^\ast$}-continuity of {J}ordan triple products and its applications, {\em Math. Scand.} \textbf{59} (1986), no. 2, 177--191.

\bibitem{BarFri90} T. Barton and Y. Friedman, Bounded derivations of $JB\sp*$-triples, {\em {Quart. J. Math. Oxford Ser.(2)}} \textbf{41} (1990), no.163, 255--268 .

\bibitem{BaDanHorn} T. Barton, T. Dang and G. Horn, Normal representations of Banach Jordan triple systems, {\em Proc. Amer. Math. Soc.} \textbf{102} (1988), no. 3, 551--555.


\bibitem{BeRo} J. Becerra Guerrero and A. Rodr{\'i}guez Palacios, Transitivity of the norm on Banach spaces having a Jordan structure, \emph{Manuscripta Math.} \textbf{102} (2000), no. 1, 111--127. 





\bibitem{BraKaUp78} R. Braun, W. Kaup and H. Upmeier, A holomorphic characterization of Jordan C$^*$-algebras, \emph{Math. Z.} \textbf{161} (1978), no. 3, 277--290.





\bibitem{Bun01} L. J. Bunce, Norm preserving extensions in  JBW$^*$-triple preduals, \emph{Quart. J. Math.} \textbf{52} (2001), no. 2, 133--136.

\bibitem{BuChu92} L. J. Bunce and C.-H. Chu,  Compact  operations, multipliers and Radon-Nikod\'{y}m property in JB$^*$-triples, \emph{Pacific J. Math.} \textbf{153} (1992), no. 2, 249--265.


\bibitem{BunChuZal2000} L. J. Bunce, C.-H. Chu and B. Zalar, Structure spaces and decomposition in JB$^*$-triples, \emph{Math. Scand.} \textbf{86} (2000), no. 1, 17--35.




\bibitem{BurFerGarMarPe2008} M. Burgos, F. J. Fern{\' a}ndez-Polo, J. J. Garc{\'e}s, J. Mart{\'i}nez Moreno and A. M. Peralta, Orthogonality preservers in C$^*$-algebras, JB$^*$-algebras and JB$^*$-triples, \emph{J. Math. Anal. Appl.} \textbf{348} (2008), no. 1, 220--233.

\bibitem{BurFerGarPe09} M. Burgos, F. J. Fern{\'a}ndez-Polo, J. J. Garc{\'e}s and A. M. Peralta, Orthogonality preservers revisited, \emph{Asian-Eur. J. Math.} \textbf{2} (2009), no. 3, 387--405.

\bibitem{BurFerGarPe2014} M. Burgos, F. J. Fern{\' a}ndez-Polo, J. J. Garc{\'e}s and A. M. Peralta, Local triple derivations on C$^*$-algebras,  {\em Communications in Algebra} {\bf 42} (2014), no. 3, 1276--1286.

\bibitem{BurFerPe2013} M. Burgos, F. J. Fern{\' a}ndez-Polo and A. M. Peralta, Local triple derivations on C$^*$-algebras and JB$^*$-triples,  \emph{Bull. London Math. Soc.},  {\bf 46} (2014), no. 4, 709--724.





\bibitem{Busby68} R. C. Busby, Double centralizers and extensions of C$^*$-algebras, \emph{Trans. Amer.
Math. Soc.} \textbf{132} (1968), 79--99.

\bibitem{CabRod2014} M. Cabrera Garc{\'i}a and A. Rodr{\' i}guez Palacios, \textit{Non-Associative Normed Algebras, Volume 1. The Vidav-Palmer and Gelfand-Naimark Theorems},  Encyclopedia of Mathematics and its Applications, 154, Cambridge University Press, Cambridge, 2014.





\bibitem{Chu2012} C.-H. Chu, \textit{Jordan Structures in Geometry and Analysis}, Cambridge Tracts in Math. 190, Cambridge University Press, Cambridge, 2012.

\bibitem{Conw1990} J. B. Conway, \emph{A Course in Functional Analysis} (Second edition), Graduate Texts in Mathematics  96, Springer-Verlag, New York, 1990. 






%




\bibitem{Di86} S. Dineen, The second dual of a JB$^*$ triple system, In: {\it Complex analysis, functional analysis and approximation theory (Campinas, 1984)}, 67--69, North-Holland Math. Stud., 125, Notas Mat., 110, North-Holland, Amsterdam, 1986.

\bibitem{DunSchwaI} N. Dunford and J. T. Schwartz, \emph{Linear operators. Part I. General theory.} With the assistance of W. G. Bade and R. G. Bartle. Reprint of the 1958 original, Wiley Classics Library, A Wiley-Interscience Publication, John Wiley \& Sons, Inc., New York, 1988.


\bibitem{Edw77} C. M. Edwards, Ideal theory in JB-algebras, \emph{J. London Math. Soc. (2)} \textbf{16} (1977), no. 3, 507--513.

\bibitem{Ed80} C. M. Edwards, Multipliers of JB-algebras, \emph{Math. Ann.}, \textbf{249} (1980), no. 3, 265--272.

\bibitem{Ed80JBW} C. M. Edwards, On Jordan W$^*$-algebras, \emph{Bull. Sci. Math. (2)}, \textbf{104} (1980), no. 4, 393--403.



\bibitem{EdRu88} C. M. Edwards and G. T. R\"uttimann, On the facial structure of the unit balls in a JBW$^*$-triple and its predual, \textit{J. London Math. Soc. (2)} \textbf{38} (1988), no. 2, 317--322.


\bibitem{EdRu92} C.M. Edwards and G. T. R\"uttimann, A characterization of inner ideals in JB$^*$-triples, \emph{Proc. Amer. Math. Soc.} \textbf{116} (1992), no. 4, 1049--1057.


\bibitem{EssPer2021} A. B. A. Essaleh and A. M. Peralta, A linear preserver problem on maps which are triple derivable at orthogonal pairs, \emph{Rev. R. Acad. Cienc. Exactas F\'{i}s. Nat. Ser. A Mat. RACSAM} \textbf{115} (2021), no. 3, Paper No. 146, 31 pp.

\bibitem{FarahBook2000} I. Farah, Analytic quotients: theory of liftings for quotients over analytic ideals on the integers, {\em Mem. Amer. Math. Soc.} \textbf{148} (2000), no. 702,
xvi+177pp.














\bibitem{GarPe2021JB} J. J. Garc{\'e}s and A. M. Peralta, One-parameter semigroups of orthogonality preservers of JB$^*$-algebras, \emph{Adv. Oper. Theory} \textbf{6} (2021), no. 2, Paper No. 43, 25 pp. 



\bibitem{GhoHeja19} S. Ghorbanipour and S. Hejazian, JB-algebras of rank zero, \emph{J. Math. Anal. Appl.} \textbf{479} (2019), no. 1, 963--976.


%




\bibitem{HKPP-BF} J. Hamhalter, O. F. K. Kalenda, A. M. Peralta and H. Pfitzner, Grothendieck's inequalities for {JB}$^*$-triples: proof of theBarton-Friedman conjecture, \emph{Trans. Amer. Math. Soc.} \textbf{374} (2021), no. 2 , 1327--1350.
%
\bibitem{HOS} H. Hanche-Olsen, E. St{\o}rmer, \emph{Jordan Operator Algebras}, Pitman, London, 1984.




\bibitem{HejNik96} S. Hejazian and A. Niknam, Modules, annihilators and module derivations of JB$^*$-algebras, \emph{Indian J. Pure Appl. Math.} \textbf{27} (1996), no. 2, 129--140.

\bibitem{HoMarPeRu} T. Ho, J. Mart{\'i}nez-Moreno, A. M. Peralta and B. Russo, Derivations on real and complex JB$^\ast$-triples, \emph{J. London Math. Soc. (2)}  \textbf{65} (2002), no. 1, 85--102.


\bibitem{HoPeRu} T. Ho, A. M. Peralta and B. Russo, Ternary weakly amenable C$^*$-algebras and JB$^*$-triples, \emph{Quart. J. Math.} \textbf{64} (2013), no. 4, 1109--1139.


\bibitem{JamPeSidd2015} F. B. Jamjoom, A. M. Peralta and A. A. Siddiqui, Jordan weak amenability and orthogonal forms on JB$^*$-algebras, \emph{Banach J. Math. Anal.} \textbf{9} (2015), no. 4, 126--145.


%




\bibitem{KadRingrBook1} R. V. Kadison and J. R. Ringrose, \emph{Fundamentals of the theory of operator algebras. Vol. I. Elementary theory.} Reprint of the 1983 original. Graduate Studies in Mathematics, 15, American Mathematical Society, Providence, RI, 1997.

\bibitem{KaPePfi22} O. Kalenda, A. M. Peralta and H. Pfitzner, On optimality of constants in the Little Grothendieck Theorem, \emph{Studia Math.} \textbf{264} (2022), no. 3, 263--304.







\bibitem{Ka} W. Kaup, A Riemann Mapping Theorem for bounded symmentric domains in complex Banach spaces, \emph{Math. Z.} \textbf{183} (1983), no. 4, 503--529.







\bibitem{LanceBook} E. C. Lance, \emph{Hilbert C$^*$-Modules: A Toolkit for Operator Algebraists}, London Mathematical Society Lecture Note Series, 210, Cambridge University Press, Cambridge, 1995.



\bibitem{LiPan} J. Li and Z. Pan, Annihilator-preserving maps, multipliers, and derivations,  \emph{Linear Algebra Appl.} {\bf 432} (2010), no. 1, 5--13.





\bibitem{McKenneyMathStackExch} P. McKenney (https://math.stackexchange.com/users/53995/paul-mckenney), When is a $*$-homomorphism between multiplier algebras strictly continuous?, URL (version: 2013-02-11): https://math.stackexchange.com/q/300276


%


\bibitem{MarPe2000} J. Mart{\'i}­nez and A. M. Peralta, Separate weak$^*$-continuity of the triple product in dual real JB$^*$-triples, \emph{Math. Z.} \textbf{234} (2000), no. 4, 635--646.




\bibitem{Ped72} G. K. Pedersen, Applications of weak$^*$ semicontinuity in C$^*$-algebra theory, \emph{Duke Math. J.} \textbf{39} (1972), 431--450.

\bibitem{Ped} G. K. Pedersen, \emph{C$^*$-algebras and their Automorphism Groups}, London Mathematical Society Monographs, 14, Academic Press, Inc., London-New York, 1979.

\bibitem{PedSAW} G. K. Pedersen, SAW$^*$-algebras and corona C$^*$-algebras, contributions to non-commutative topology, \emph{J. Operator Theory} \textbf{15} (1986), no. 1, 15--32.




\bibitem{PeRu2014} A. M. Peralta and B. Russo, Automatic continuity of triple derivations on C$^*$-algebras and JB$^*$-triples, \emph{J. Algebra}  \textbf{399} (2014), 960-977.


\bibitem{pisier2012grothendieck} G. Pisier, Grothendieck's theorem, past and present, \emph{Bull. Amer. Math. Soc. (N.S.)} \textbf{49} (2012), no. 2, 237--323.





\bibitem{RodPa91} A. Rodr{\'i}guez-Palacios, Jordan structures in analysis, \emph{Jordan algebras (Oberwolfach, 1992)}, 97--186, de Gruyter, Berlin, 1994. 







\bibitem{Sa} S. Sakai, \textit{C*- and W*-algebras}, Springer-Verlag, Berlin-New York, 1971.

\bibitem{SleziakMathStackExch} M. Sleziak (https://math.stackexchange.com/users/8297/martin-sleziak), When is a $^*$-homomorphism between multiplier algebras strictly continuous?, URL (version: 2017-04-13): https://math.stackexchange.com/q/340595




\bibitem{Tay70} D. C. Taylor, The strict topology for double centralizer algebras, \emph{Trans. Amer. Math. Soc.} \textbf{150} (1970), 633--643.

\bibitem{Topping} D. M. Topping, Jordan algebras of self-adjoint operators, \emph{Mem. Amer. Math. Soc.} \textbf{53} (1965), 48pp.










\bibitem{Wright77} J. D. M. Wright, Jordan C$^*$-algebras, \emph{Michigan Math. J.} \textbf{24} (1977), no. 3, 291--302.



\bibitem{Young78} M. A. Youngson, A Vidav theorem for Banach Jordan algebras, \emph{Math. Proc. Cambridge Philos. Soc.} \textbf{84} (1978), no. 2, 263--272.
\end{thebibliography}
\end{document}